\newcommand*{\un}{{\mathbf 1}}
\newtheorem{thm}{Theorem}
\newtheorem{cor}[thm]{Corollary}
\newtheorem{lem}[thm]{Lemma}
\newtheorem{prop}[thm]{Proposition}
\newtheorem{defn}{Definition}
\newtheorem{rmk}[thm]{Remark}
 \def\strich{\scalebox{0.7}{ 
  \begin{picture}(4,18) (31,-31)
    \SetWidth{1.5}
    \SetColor{Black}
    \Line(32,-14)(32,-30)
\end{picture}
}}
 \def\n{\scalebox{0.7}{ 
  \begin{picture}(4,18) (51,-31)
    \SetWidth{1.5}
    \SetColor{Black}
    \Line(48,-14)(48,-30)
    \Line(64,-14)(64,-30)
    \Line(48,-14)(64,-14)
  \end{picture}
}}
 \def\nin{\scalebox{0.7}{ 
  \begin{picture}(34,18) (31,-31)
    \SetWidth{1.5}
    \SetColor{Black}
    \Line(32,-14)(32,-30)
    \Line(48,-22)(48,-30)
    \Line(64,-14)(64,-30)
    \Line(32,-14)(48,-14)
    \Line(48,-14)(64,-14)
  \end{picture}
}}
\begin{document}

\title[Post-Lie algebras and factorization theorems]{Post-Lie algebras and factorization theorems}


\date{\today}

\maketitle
\begin{center}
\author{
Kurusch Ebrahimi-Fard\footnote{\small Department of Mathematical Sciences, Norwegian University of Science and Technology (NTNU), 7491 Trondheim, Norway. On leave from UHA, Mulhouse, France.\\ {\it\small{kurusch.ebrahimi-fard@ntnu.no}}} 

Igor Mencattini\footnote{\small Instituto de Ci\^encias Matem\'aticas e de Computa\c{c}\~ao, Univ.~de S\~ao Paulo (USP), S\~ao Carlos, SP, Brazil.\\ {\it\small{igorre@icmc.usp.br}}}

Hans Munthe-Kaas\footnote{\small Dept.~of Mathematics, University of Bergen, Postbox 7800,
N-5020 Bergen, Norway.\\ {\it\small{hans.munthe-kaas@math.uib.no}}}
}
\end{center}

\vspace{0.7cm}

\date{today}


\begin{abstract}
In this note we further explore the properties of universal enveloping algebras associated to a post-Lie algebra. Emphasizing the role of the Magnus expansion, we analyze the properties of group like-elements belonging to (suitable completions) of those Hopf algebras. Of particular interest is the case of post-Lie algebras defined in terms of solutions of modified classical Yang--Baxter equations. In this setting we will study factorization properties of the aforementioned group-like elements.
\end{abstract}


\medskip

\begin{quote}
\noindent {\footnotesize{{\bf{Keywords}}: post-Lie algebra; universal enveloping algebra; factorization theorems; Lie admissible algebras; Magnus expansion; Hopf algebra; classical $r$-matrices}.}\\
{\footnotesize{\bf MSC Classification}: 16T05; 16T10; 16T25; 16T30;17D25}
\end{quote}


\tableofcontents


\section{Introduction}
\label{sect:Intro}

This work continues the study of the Lie enveloping algebra of a post-Lie algebra described in \cite{EFLMK}. In a nutshell, a post-Lie algebra is a Lie algebra $\mathfrak g=(V,[\cdot,\cdot])$ whose underlying vector space $V$ is endowed with a bilinear operation, called post-Lie product, satisfying certain compatibility conditions with the Lie bracket $[\cdot,\cdot]$. Since the compatibility of the post-Lie product with $[\cdot,\cdot]$ yields a second Lie bracket $\llbracket\cdot,\cdot\rrbracket$ on $V$, to every post-Lie algebra are naturally associated two Hopf algebras, $\mathcal U(\mathfrak g)$ and $\mathcal U(\bar{\mathfrak g})$, i.e., the universal enveloping algebras of $\mathfrak g$ respectively $\bar{\mathfrak g}=(V,\llbracket\cdot,\cdot\rrbracket)$. Even though $\mathcal U(\mathfrak g)$ and $\mathcal U(\bar{\mathfrak g})$ are neither isomorphic as Hopf algebras nor as associative algebras, one can show that a lift of the post-Lie product to $\mathcal U(\mathfrak g)$ yields a new Hopf algebra $\mathcal U_\ast(\mathfrak g)$ which turns out to be isomorphic as Hopf algebra to $\mathcal U(\bar{\mathfrak g})$. The existence of such a Hopf algebra isomorphism can be thought of as a non-commutative extension of a well-known result proven by Guin and Oudom in \cite{OudomGuin} in the context of pre-Lie algebras.

The present work has two central aims. The first one is to explore several of the results in the papers \cite{RSTS,STS3} from the perspective offered by the relatively new theory of post-Lie algebras \cite{LMK1,LMK2,Vallette}. The second aim is to start a more systematic investigation of the so called post-Lie Magnus expansion introduced in \cite{EFLIMK}, both from the point of view of its properties as well as its applications to isospectral flows. More details, see, for example, \cite{ChuNorris,Watkins} and the monograph \cite{Suris}.

As noticed for the first time in \cite{GuoBaiNi}, a rich source of concrete examples of post-Lie algebra is provided by the theory of classical $r$-matrices together with the corresponding classical Yang--Baxter equations, which play an important role in the theory of classical integrable systems \cite{BaBeTa,PolishReview,STS2, Suris}. It is worth noticing that there exist actually two different definitions of classical $r$-matrices which are not completely equivalent; the first one, due to Drinfeld, gives rise to the structure of Lie algebra on the dual space of a given Lie algebra. The second one, due to Semenov-Tian-Shansky, yields a second Lie bracket on the same underlying linear space. Accordingly, one speaks of a \emph{Lie bialgebra} in the former case and of a \emph{double Lie algebra} in the latter one. The role of these definitions is different. Lie bialgebras arise in connection with the deformation of the co-commutative coproduct on the universal enveloping algebra of the initial Lie algebra, and eventually they go together with the construction of the deformed algebra $\mathcal U_q(\mathfrak g)$. Double Lie algebras, on the contrary provide abstract versions of factorization problems which play the central role in the study of classical integrable systems admitting a Lax representation. There exists also a way to combine both definitions yielding the notion of \emph{factorizable Lie bialgebras}, see \cite{RSTS}. It is this latter version that is used to extend factorization theorems from the classical realm to quantum algebras $\mathcal U_q(\mathfrak g)$; the extra condition imposed on the classical $r$-matrix in this case is the skew-symmetry (with respect to the invariant inner product on $\mathfrak g$). There exist many double Lie algebras for which the associated $r$-matrix is not skew; in this case factorization theorems are still valid, as pointed out in \cite{STS3}, but there is, in general, no natural way to deform the coproduct (in the category of Hopf algebras). In the present work we will deal exclusively with the case of factorization theorems for \emph{ordinary} universal enveloping algebra, leaving the case of quantum algebras for future investigations. In particular, we will adopt systematically the notation and terminology used in \cite{STS1}.

As already remarked, in the seminal work  \cite{STS1} Semenov-Tian-Shansky showed that solutions of modified classical Yang--Baxter equations, known as classical $r$-matrices, play an important role in studying solutions of Lax equations, and are intimately related to particular factorization problems in the corresponding Lie groups. More precisely, any solution $R$ of the modified classical Yang--Baxter equation on a Lie algebra $\mathfrak g$ gives rise to a so-called double Lie algebra, i.e., a second Lie algebra $\mathfrak g_R$ can be defined on the vector space underlying $\mathfrak g$. Its Lie bracket is given in terms of the original Lie bracket of $\mathfrak g$ together with the classical $r$-matrix $R$, in such a way, that when splitting the linear map $R = R_+ + R_-$  appropriately, both maps, $R_{\pm}$, become Lie algebra morphisms from $\mathfrak g_R$ to $\mathfrak g$. Every element of the Lie group $G$ corresponding to $\mathfrak g$, which is sufficiently close to the identity, admits a factorization as a product of two elements belonging to two suitably defined Lie subgroups $G_{\pm} \subset G$. See \cite{STS4} for more details. It is this sort of factorization that plays a critical role in the solution of the isospectral flow mentioned above. As an aside, we remark that the latter are closely related to matrix factorization schemes \cite{ChuNorris,Faybusovich,Watkins}.     

In an attempt to extend this analysis to the theory of quantum integrable systems, the aforementioned factorization problem has been studied in references \cite{RSTS,STS3} in the framework of universal enveloping algebra of a Lie algebra endowed with a solution of the modified classical Yang--Baxter equation. In these works it was shown that every classical $r$-matrix $R$ defined on a (finite dimensional) Lie algebra $\mathfrak g$, gives rise to a factorization of any group-like element of (a suitable completion of) the universal enveloping algebra $\mathcal U(\mathfrak g)$. This result came as a consequence of the existence of a linear isomorphism $F:\mathcal U(\mathfrak g_R)\rightarrow \mathcal U(\mathfrak g)$, extending the identity map between the Lie algebras $\mathfrak g_R$ and $\mathfrak g$. The map $F$ is defined explicitly in terms the usual Hopf algebra structures on the corresponding universal enveloping algebras, $\mathcal U(\mathfrak g_R)$ and $\mathcal U(\mathfrak g)$, together with the liftings of the Lie algebra morphisms $R_{\pm}$, defined via the aforementioned splitting of $R$, to unital algebra morphisms between those algebras. In the paper \cite{STS3} a new associative product was defined on $\mathcal U(\mathfrak g)$ by pushing-forward the associative product of $\mathcal U(\mathfrak g_R)$ in terms of the linear isomorphism $F$, making it an isomorphism of unital associative algebras. See also \cite{RSTS}. 

The Hopf algebraic results for general post-Lie algebra motivate our aim to reconnoiter references \cite{RSTS,STS3} from a Hopf algebra theoretic point of view using the post-Lie product induced by a classical $r$-matrix. Indeed, we shall show that when a post-Lie algebra structure is defined in term of a solution of the modified classical Yang--Baxter equation, the aforementioned Hopf algebra isomorphism between $\mathcal U_\ast(\mathfrak g)$ and $\mathcal U(\bar{\mathfrak g})=\mathcal U(\mathfrak g_R)$ can be realized in terms of the Hopf algebra structure of these two universal enveloping algebras. It assumes the explicit form of the map $F$ introduced in \cite{RSTS,STS3}. We deduce that the associative product defined \cite{RSTS,STS3} as the push-forward to $\mathcal U(\mathfrak g)$ of the product of $\mathcal U(\mathfrak g_R)$ coincides with the extension to $\mathcal U(\mathfrak g)$ of the post-Lie product defined on $\mathfrak g$ in terms of the classical $r$-matrix. As a practical consequence this makes the computation of the product originally defined in \cite{RSTS,STS3} more transparent. The aforementioned is based on the central part of this work, which aims at understanding the role of post-Lie algebra in the context of the factorization problem mentioned above. In this respect we show for any post-Lie algebra, that for every $x \in \mathfrak g$ there exist a unique element $\chi(x) \in \mathfrak g$, such that $\exp(x) = \exp^*(\chi(x))$ in (suitable completions of) $\mathcal U(\mathfrak g)$ and $\mathcal U_*(\mathfrak g)$. The map $\chi: \mathfrak g \to \mathfrak g$ is described as the solution of a particular differential equation, and is dubbed post-Lie Magnus expansion. We show that in the classical $r$-matrix case this general post-Lie result implies, that any group-like element $\exp(x)$ in (a suitable completion of) $\mathcal U(\mathfrak g)$ factorizes into the product of two exponentials, $ \exp(\chi_+(x))$ and $\exp(\chi_-(x))$, with $\chi_{\pm}(x):=R_{\pm}\chi(x)$. In forthcoming work we intend to explore in greater detail the -- post-Lie -- algebraic and geometric properties of the map $\chi$ and the corresponding factorization from the point of view of Riemann--Hilbert problems related to the study of solutions of Lax equations \cite{RSTS,STS1}.  

\smallskip

\emph{The paper is organized as follows}. After recollecting the definition of a post-Lie algebra and some of its most elementary properties in Section \ref{sect:LieAdmPostLie}, we recall, for the sake of completeness, some basic information about the theory of the classical $r$-matrices. Then, with the aim of being as self-contained as possible, we discuss how the lifting of the post-Lie product yields the Hopf algebra $\mathcal U_*(\mathfrak g)$, which is isomorphic as a Hopf algebra to $\mathcal U(\bar{\mathfrak g})$. The new result in this section is Theorem \ref{thm:FinverseChi}. In Section \ref{sect:anotherHA} yet another seemingly different looking Hopf algebra on $\mathcal{U}(\mathfrak g)$ is introduced in the specific context of a Lie algebra $\mathfrak g$ endowed with a classical $r$-matrix. This Hopf algebra is then shown to coincide with the one coming from the post-Lie algebra induced on $\mathfrak g$ by the classical $r$-matrix. Finally in Section \ref{sect:factorThm} we explore a natural factorization theorem for group-like elements using Theorem \ref{thm:FinverseChi} in the appropriately completed universal enveloping algebra.

\smallskip

\begin{rmk}
In this work all vector spaces are assumed to be finite dimensional over the base fields $\mathbb K=\mathbb R$ or $\mathbb K=\mathbb C$.  Moreover, often we will need to consider different Lie algebra structures defined on the same underlying vector space, which, from now on, will be denoted with $V$. 
\end{rmk}

\smallskip

\noindent {\bf{Acknowledgements}}: This work started during a stay of the first author at the Instituto de Ci\^encias Matem\'aticas e de Computa\c{c}\~ao, Univ.~de S\~ao Paulo, campus S\~ao Carlos, Brazil, which was support by the FAPESP grant 2015/06858-2.


\section{Post-Lie algebras and classical $r$-matrices}
\label{sect:LieAdmPostLie}

We start this section by recalling the definition of post-Lie algebra \cite{MKW08,LMK1,Vallette} together with some of its basic properties. See \cite{EFLMK} for more details and references. We will also briefly discuss the post-Lie algebra structure on a Lie algebra endowed with a solution of modified classical Yang--Baxter equation (MCYBE). See \cite{GuoBaiNi} for more details. Then we summarize how post-Lie algebra properties are lifted to the universal enveloping algebra of the corresponding Lie algebra. Details can be found in \cite{EFLIMK}.

Let $(\mathcal A,\cdot)$ be a $\mathbb K$-algebra. Recall the definition of the {\it{associator}} map ${\rm{a}}_\cdot : \mathcal A \otimes \mathcal A \otimes \mathcal A \to \mathcal A$   
$$
	{\rm{a}}_\cdot(x,y,z):= x \cdot (y \cdot z) - (x \cdot y) \cdot z,
$$
for any $x,y,z\in \mathcal A$. The definition of post-Lie algebra follows.

\begin{defn} \label{def:postLie} 
Let $\mathfrak g=(V, [\cdot,\cdot])$ be a Lie algebra, and let $\triangleright  : V \otimes V \rightarrow V$ be a binary product such that for all $x,y,z \in V$
\begin{equation}
\label{postLie1}
	x \triangleright [y,z] = [x\triangleright y , z] + [y , x \triangleright z],
\end{equation}
and
\begin{equation}
\label{postLie2}
	[x,y] \triangleright z = {\rm{a}}_{\triangleright  }(x,y,z) - {\rm{a}}_{\triangleright  }(y,x,z).
\end{equation}
Then the pair $(\mathfrak{g}, \triangleright)$ is called a \emph{left post-Lie algebra}. 
\end{defn}

\begin{rmk}
\begin{enumerate}[i)]
\item From now on, given a post-Lie algebra $(\mathfrak g,\triangleright)$, we will write $x\in\mathfrak g$ instead of $x\in V$.
\item Relation \eqref{postLie1} implies that for every left post-Lie algebra the natural map $\ell_\triangleright : \mathfrak g \rightarrow\operatorname{End}_{\mathbb K}(\mathfrak g)$ defined by $\ell_\triangleright (x)(y) := x \triangleright y$ is linear and takes values in the derivations of $\mathfrak g$.

\item Together with the notion of left post-Lie algebra one can introduce the notion of \emph{right post-Lie algebra}. In this case \eqref{postLie2} becomes $[x,y] \triangleright z = {\rm{a}}_{\triangleright  }(y,x,z) - {\rm{a}}_{\triangleright  }(x,y,z).$
\end{enumerate}
\end{rmk}

For the rest of this work, unless stated otherwise, the term post-Lie algebra refers to left post-Lie algebra. Furthermore, the next result is critical to the theory of post-Lie algebras. 

\begin{prop} \cite{LMK1} \label{prop:post-lie}
Let $(\mathfrak g, \triangleright)$ be a post-Lie algebra. The bracket
\begin{equation}
\label{postLie3}
	[[x,y]] := x \triangleright y - y \triangleright x + [x,y]
\end{equation}
satisfies the Jacobi identity for all $x, y \in \mathfrak g$. 
\end{prop}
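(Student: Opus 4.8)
The statement has two parts: that $\llbracket\cdot,\cdot\rrbracket$ is skew-symmetric (immediate from \eqref{postLie3} together with the skew-symmetry of $[\cdot,\cdot]$, since $x\triangleright y-y\triangleright x$ is visibly skew) and that it satisfies the Jacobi identity; only the second requires work. Rather than expand the cyclic sum $\sum_{\mathrm{cyc}}\llbracket\llbracket x,y\rrbracket,z\rrbracket$ term by term, the plan is to first restate axiom \eqref{postLie2} as an operator identity for $\ell_\triangleright\colon\mathfrak g\to\operatorname{End}_{\mathbb K}(\mathfrak g)$. Writing ${\rm a}_\triangleright(x,y,z)=\ell_\triangleright(x)\ell_\triangleright(y)z-\ell_\triangleright(x\triangleright y)z$ and using that $\ell_\triangleright$ is linear, a short manipulation shows \eqref{postLie2} is equivalent to
\[
\ell_\triangleright\big(\llbracket x,y\rrbracket\big)=\big[\ell_\triangleright(x),\ell_\triangleright(y)\big]\qquad(x,y\in\mathfrak g),
\]
the bracket on the right being the commutator in $\operatorname{End}_{\mathbb K}(\mathfrak g)$. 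By \eqref{postLie1} (equivalently, the Remark after Definition~\ref{def:postLie}) every $\ell_\triangleright(x)$ lies in $\operatorname{Der}(\mathfrak g)$, hence so does every such commutator, so both sides in fact live in $\operatorname{Der}(\mathfrak g)$.

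The key step is then to realize $(V,\llbracket\cdot,\cdot\rrbracket)$ inside a bona fide Lie algebra. Form the semidirect product $\mathfrak h:=\mathfrak g\rtimes\operatorname{Der}(\mathfrak g)$, i.e.\ the vector space $V\oplus\operatorname{Der}(\mathfrak g)$ with Lie bracket
\[
\big[(a,A),(b,B)\big]=\big([a,b]+A(b)-B(a),\,[A,B]\big),
\]
which is a Lie algebra because $\operatorname{Der}(\mathfrak g)$ acts on $\mathfrak g$ by derivations. Define $\Phi\colon V\to\mathfrak h$ by $\Phi(x)=(x,\ell_\triangleright(x))$; this is an injective linear map. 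Computing the bracket of $\Phi(x)$ and $\Phi(y)$ in $\mathfrak h$ and substituting the displayed operator identity for the second component gives
\[
\big[\Phi(x),\Phi(y)\big]=\big([x,y]+x\triangleright y-y\triangleright x,\ \big[\ell_\triangleright(x),\ell_\triangleright(y)\big]\big)=\big(\llbracket x,y\rrbracket,\ \ell_\triangleright(\llbracket x,y\rrbracket)\big)=\Phi\big(\llbracket x,y\rrbracket\big).
\]
Thus $\Phi(V)$ is closed under the bracket of $\mathfrak h$, hence a Lie subalgebra, and $\Phi$ is a linear isomorphism of $(V,\llbracket\cdot,\cdot\rrbracket)$ onto $\Phi(V)$ intertwining $\llbracket\cdot,\cdot\rrbracket$ with the restriction of the Lie bracket of $\mathfrak h$. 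Since the latter satisfies the Jacobi identity, so does $\llbracket\cdot,\cdot\rrbracket$.

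I do not expect a genuine obstacle in this route: the only things to be careful about are the order/sign conventions in the semidirect-product bracket and the elementary verification that \eqref{postLie2} really does amount to the multiplicativity statement $\ell_\triangleright(\llbracket x,y\rrbracket)=[\ell_\triangleright(x),\ell_\triangleright(y)]$. An alternative, if one prefers to stay within $\mathfrak g$, is the direct computation: expand $\llbracket\llbracket x,y\rrbracket,z\rrbracket$ via \eqref{postLie3}, rewrite $[x,y]\triangleright z$ using \eqref{postLie2} and $z\triangleright[x,y]$ using \eqref{postLie1}, and check that upon cyclic summation every $\triangleright$-term cancels against another while the purely $[\cdot,\cdot]$-terms sum to zero by the Jacobi identity in $\mathfrak g$; this works but is exactly the bookkeeping the embedding argument is designed to avoid.
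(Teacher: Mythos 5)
Your argument is correct. Note first that the paper does not prove this proposition at all: it is quoted from \cite{LMK1}, so there is no in-text proof to compare against. Your route is a clean structural one. The pivotal observation, that axiom \eqref{postLie2} is literally the statement $\ell_\triangleright(\llbracket x,y\rrbracket)=[\ell_\triangleright(x),\ell_\triangleright(y)]$ in $\operatorname{End}_{\mathbb K}(\mathfrak g)$, is verified correctly: expanding ${\rm a}_\triangleright(x,y,z)-{\rm a}_\triangleright(y,x,z)$ gives $[\ell_\triangleright(x),\ell_\triangleright(y)]z-\ell_\triangleright(x\triangleright y-y\triangleright x)z$, and equating this with $\ell_\triangleright([x,y])z$ yields exactly the claimed multiplicativity. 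Axiom \eqref{postLie1} guarantees $\ell_\triangleright(x)\in\operatorname{Der}(\mathfrak g)$, so the semidirect product $\mathfrak g\rtimes\operatorname{Der}(\mathfrak g)$ with the bracket you wrote is a genuine Lie algebra, and the computation $[\Phi(x),\Phi(y)]=\Phi(\llbracket x,y\rrbracket)$ together with injectivity of $\Phi$ (clear from the first component) transports the Jacobi identity back to $\llbracket\cdot,\cdot\rrbracket$; one does not even need to remark that $\Phi(V)$ is a subalgebra, since applying $\Phi$ to the cyclic sum and using injectivity suffices. The only other proof in circulation is the direct expansion you sketch at the end (rewrite $[x,y]\triangleright z$ via \eqref{postLie2}, $z\triangleright[x,y]$ via \eqref{postLie1}, and cancel under cyclic summation), which is what one finds in the literature; your embedding trades that bookkeeping for the one-time verification that the semidirect-product bracket satisfies Jacobi, and has the added benefit of exhibiting $\overline{\mathfrak g}$ concretely as a subalgebra of $\mathfrak g\rtimes\operatorname{Der}(\mathfrak g)$, which is in the spirit of the geometric interpretation of post-Lie algebras mentioned in Remark \ref{rem:notrem}.
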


Recall that a $\mathbb K$-algebra $(\mathcal A,\cdot)$ is called \emph{Lie admissible} if the commutator $[\cdot,\cdot]: \mathcal A \otimes \mathcal A \rightarrow \mathcal A$, which is defined for all $x,y \in \mathcal A$ by antisymmetrization, $[x,y]:=x \cdot y - y \cdot x$, yields a Lie bracket.  \emph{Left pre-Lie algebras} \cite{Burde,Cartier11,Manchon}, which are characterised through a binary product $\curvearrowleft: \mathcal A \otimes \mathcal A \rightarrow \mathcal A$  satisfying the \emph{left pre-Lie relation} ${\rm{a}}_{\curvearrowleft}(x,y,z) = {\rm{a}}_{\curvearrowleft}(y,x,z),$ are Lie admissible. Likewise a right pre-Lie algebra is defined by ${\rm{a}}_{\curvearrowright}(x,y,z) = {\rm{a}}_{\curvearrowright}(x,z,y)$.
In particular note that, although a post-Lie algebra is not Lie-admissible, one can define the product $x \succ y := x \triangleright y + \frac{1}{2}[x,y],$ such that $(\mathfrak g,\succ)$ is Lie-admissible. 
Moreover, if $(\mathfrak g,\triangleright)$ is a post-Lie algebra, whose underlying Lie algebra $\mathfrak g=(V,[\cdot,\cdot])$ is abelian, i.e. if $[\cdot,\cdot]$ is identically zero, axiom {\rm{(\ref{postLie2})}} reduces to the left pre-Lie identity $\mathrm{a}_{\triangleright}(x,y,z) = \mathrm{a}_{\triangleright}(y,x,z).$ This implies that the vector space $V$ together with the product $\triangleright: V\otimes V \rightarrow V$ is a {\it{left pre-Lie}} algebra.

\begin{rmk}\label{rem:notrem} A few remarks are in order.
\begin{enumerate}
	\item From now on, for given a post-Lie algebra $(\mathfrak g,\triangleright)$, where $\mathfrak g=(V,[\cdot,\cdot])$, we write  $\overline{\mathfrak g}:=(V,\llbracket\cdot,\cdot\rrbracket)$, where $\llbracket\cdot,\cdot\rrbracket$ is the Lie bracket defined in \eqref{postLie3}.	
	
	\item If one trades right for left post-Lie algebras, then the new bracket in Proposition \ref{prop:post-lie}, which satisfies the Jacobi identity, becomes
\[
	[[x,y]]:= x \triangleright y - y \triangleright x -[x,y],\qquad \forall x,y\in\mathfrak g.
\]
	
	\item It turns out that differential geometry is a natural place to look for examples of pre- and post-Lie algebras. Indeed, regarding the former, the canonical connection on $\mathbb{R}^n$ is flat with zero torsion, and defines a pre-Lie
algebra on the set of vector fields. Following \cite{LMK1} a Koszul connection $\nabla$ yields a $\mathbb{R}$-bilinear product $X\triangleright Y=\nabla_XY$ on the space of smooth vector fields $\mathcal{X}(\mathcal{M})$ on a manifold $\mathcal{M}$. Flatness and constant torsion, together with the Bianchi identities imply relation \eqref{postLie3} between the Jacobi-Lie bracket of vector fields, the torsion itself, and the product defined in terms of the connection. 
	
	\item Post-Lie algebras are important in the theory of numerical methods for differential equations. We refer the reader to \cite{EFLMK,MKW08,LMK1} for more details on this topic.
\end{enumerate}
\end{rmk}


{\bf{Classical $r$-matrices.}}\quad We briefly recall a few facts about classical $r$-matrices. For details and examples the reader is referred to \cite{Poisson1, Poisson2, STS2, Suris}. Let $\mathfrak g=(V,[\cdot,\cdot])$ be a Lie algebra and let $\theta \in \mathbb K$ be a parameter fixed once and for all. For a linear map $R$ on $\mathfrak{g}$ the bracket
\begin{equation} \label{eq:Rbra}
	[x,y]_R := \frac{1}{2}([Rx,y]+[x,Ry])
\end{equation}
is skew-symmetric for all $x,y\in\mathfrak{g}$. Moreover, if $B_R:\mathfrak g\otimes\mathfrak g\rightarrow\mathfrak g$ is defined  for all $x,y\in\mathfrak g$ by
\begin{equation}\label{eq:B}
	B_R(x,y):=R([Rx,y]+[x,Ry])-[Rx,Ry],
\end{equation}
then $[\cdot,\cdot]_R$ satisfies the Jacobi identity if and only if:
\begin{equation}\label{eq:modYB}
	[B_R(x,y),z]+[B_R(z,x),y]+[B_R(y,z),x]=0,
\end{equation}
for all $x,y,z\in\mathfrak g$. Defining $B_R(x,y):=\theta [x,y]$, which amounts to the identity
\begin{equation}\label{eq:EMYB}
	[Rx,Ry]=R([Rx,y]+[x,Ry])-\theta [x,y],
\end{equation}
for all $x,y\in\mathfrak{g}$, implies that \eqref{eq:modYB} is fulfilled.

\begin{defn}[Classical $r$-matrix and MCYBE]
\label{def:r-matrix}
Equation \eqref{eq:EMYB} is called {\it modified classical Yang--Baxter Equation} (MCYBE) with parameter $\theta$ and its solutions are called {\it classical $r$-matrices}. For $\theta=0$, equation \eqref{eq:EMYB} reduces to the so called {\it classical Yang--Baxter Equation} (CYBE). 
\end{defn}

In the present work we will be mainly concerned with the case where $\theta=1$. For this reason, in what follows, the term classical $r$-matrix refers to an element $R \in\operatorname{End}_{\mathbb K}(\mathfrak g)$  such that
\begin{equation}
\label{eq:impc}
	[Rx,Ry]=R([Rx,y]+[x,Ry])-[x,y],
\end{equation}
for any $x,y \in \mathfrak g$. In this setting equation \eqref{eq:impc} will be referred to as MCYBE. 

\noindent We call \eqref{eq:Rbra} the {\it{double Lie bracket}} and denote the corresponding Lie algebra by $\mathfrak g_R:=(V,[\cdot,\cdot]_R)$. The Lie algebra $\mathfrak g$ with classical $r$-matrices $R$ is called {\it{double Lie algebra}}. The significance of solutions of \eqref{eq:impc} stems from the following well known result.

\begin{prop}\cite{STS1} One can prove that:
\begin{enumerate}
	\item[i)] The linear maps $R_\pm:\mathfrak g_R\rightarrow\mathfrak g$, defined by 
		\begin{equation}
		\label{eq:rpm}
			R_\pm:=\frac{1}{2}(R\pm\operatorname{id})
		\end{equation}
are Lie algebra morphisms, $R_{\pm}([x,y]_R)=[R_{\pm}x,R_{\pm}y]$, which amounts to the two identities
\begin{equation}
\label{eq:RBE}
	[R_\pm x, R_\pm y] = R_\pm ([R_\pm x, y] + [x,R_\pm y] \mp [x,y]).
\end{equation}

\item[ii)] Moreover, if one defines $\mathfrak g_\pm:=\operatorname{Im}(R_\pm:\mathfrak g_R\rightarrow\mathfrak g)$ and $\kappa_\pm:=\operatorname{Ker}(R_\mp:\mathfrak g_R\rightarrow\mathfrak g)$, then $\kappa_\pm\subset\mathfrak g_\pm$ and the natural application $\Theta:\mathfrak g_+/\kappa_+\rightarrow\mathfrak g_-/\kappa_-$ is an isomorphism of Lie algebras.
\end{enumerate}
\end{prop}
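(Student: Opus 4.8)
The plan is to prove i) by a direct computation with the MCYBE \eqref{eq:impc} and then to derive ii) by elementary Lie-theoretic manipulations; the only point requiring a little care is that the two quotients appearing in ii) are honest Lie algebras.

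For i), expand $R_\pm([x,y]_R)=\tfrac14\bigl(R([Rx,y]+[x,Ry])\pm([Rx,y]+[x,Ry])\bigr)$ and substitute $R([Rx,y]+[x,Ry])=[Rx,Ry]+[x,y]$ from \eqref{eq:impc}, obtaining $R_\pm([x,y]_R)=\tfrac14\bigl([Rx,Ry]+[x,y]\pm[Rx,y]\pm[x,Ry]\bigr)$; this is precisely $[R_\pm x,R_\pm y]=\tfrac14[\,Rx\pm x,\ Ry\pm y\,]$ written out. Hence $R_\pm\colon\mathfrak g_R\to\mathfrak g$ is a Lie algebra morphism. The equivalence with \eqref{eq:RBE} then follows from the elementary identity $[R_\pm x,y]+[x,R_\pm y]\mp[x,y]=[x,y]_R$ (expand $R_\pm=\tfrac12(R\pm\operatorname{id})$): applying $R_\pm$ to it and using the morphism property yields \eqref{eq:RBE}, and conversely substituting this identity into \eqref{eq:RBE} returns $R_\pm([x,y]_R)=[R_\pm x,R_\pm y]$. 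This part is bookkeeping with \eqref{eq:impc}, with no real obstacle.

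For ii), note first that $\mathfrak g_\pm=\operatorname{Im}R_\pm$ are Lie subalgebras of $\mathfrak g$ and $\kappa_\pm=\operatorname{Ker}(R_\mp\colon\mathfrak g_R\to\mathfrak g)$ are ideals of $\mathfrak g_R$, by part~i). From $R_\mp=\tfrac12(R\mp\operatorname{id})$ one reads off $\kappa_\pm=\{v\in V:Rv=\pm v\}$, and for such $v$ one has $R_\pm v=\pm v$ and $R_\mp v=0$; in particular $\kappa_\pm=R_\pm(\kappa_\pm)\subseteq\operatorname{Im}R_\pm=\mathfrak g_\pm$, giving the first claim. For the isomorphism, apply the first isomorphism theorem to the Lie morphism $R_+$: it induces a Lie isomorphism $\overline{R}_+\colon\mathfrak g_R/\kappa_-\xrightarrow{\ \sim\ }\mathfrak g_+$. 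Since $R_+(\kappa_+)=\kappa_+$ and $R_+(\kappa_-)=0$, this isomorphism maps the ideal $(\kappa_++\kappa_-)/\kappa_-$ of $\mathfrak g_R/\kappa_-$ onto $\kappa_+$; consequently $\kappa_+$ is an ideal of $\mathfrak g_+$ (so $\mathfrak g_+/\kappa_+$ is a Lie algebra) and $\overline{R}_+$ descends to a Lie isomorphism $\mathfrak g_R/(\kappa_++\kappa_-)\xrightarrow{\ \sim\ }\mathfrak g_+/\kappa_+$. The same reasoning with $R_-$ yields a Lie isomorphism $\mathfrak g_R/(\kappa_++\kappa_-)\xrightarrow{\ \sim\ }\mathfrak g_-/\kappa_-$, and $\Theta$ is the composite of the inverse of the first with the second; explicitly $\Theta(R_+z+\kappa_+)=R_-z+\kappa_-$ for $z\in\mathfrak g_R$, the natural map in question. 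As a composite of Lie isomorphisms, $\Theta$ is an isomorphism of Lie algebras.

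The only step that is not purely mechanical is verifying that $\mathfrak g_+/\kappa_+$ and $\mathfrak g_-/\kappa_-$ make sense as Lie algebras, i.e.\ that $\kappa_\pm$ is an ideal of $\mathfrak g_\pm$ rather than merely a subspace; the first-isomorphism-theorem observation above handles this, and if one prefers a hands-on check one can use the identity $[x,y]_R=[x,R_+y]$, valid for $x\in\kappa_+$ and all $y\in\mathfrak g$, together with the fact that $\kappa_+$ is an ideal of $\mathfrak g_R$, to conclude $[\kappa_+,\mathfrak g_+]\subseteq\kappa_+$ directly.
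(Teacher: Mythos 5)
Your proof is correct: part i) is the straightforward expansion using the MCYBE, and part ii) correctly identifies $\kappa_\pm$ as the $\pm1$-eigenspaces of $R$, uses that they are ideals of $\mathfrak g_R$ (kernels of the Lie morphisms $R_\mp$), and obtains $\Theta$ by factoring both first-isomorphism-theorem maps through $\mathfrak g_R/(\kappa_++\kappa_-)$, which also settles the well-definedness of the quotients. The paper states this proposition as a known result from Semenov-Tian-Shansky's work and gives no proof of its own, so there is nothing to compare against; your argument is the standard one.
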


\noindent Observe that the Lie bracket \eqref{eq:Rbra} expressed in terms of the maps $R_\pm$ defined in \eqref{eq:rpm} becomes 
$$
	[x,y]_R = [R_\pm x, y] + [x,R_\pm y] \mp [x,y].
$$
Assume $R \in \operatorname{End}_{\mathbb K}(\mathfrak g)$ to be a solution of equation \eqref{eq:impc}. Let $G$ and $G_R$ be the connected and simply-connected Lie groups corresponding to the Lie algebras $\mathfrak g$ respectively $\mathfrak g_R$. By $r_\pm: G_R \rightarrow G$ we denote the Lie group homomorphisms, which integrate the Lie algebra homomorphisms $R_\pm$. Furthermore, let $G_\pm:=\operatorname{Im}\,(r_\pm:G_R\rightarrow G)$, and let $\delta: G\rightarrow G\times G$ and $i:G\rightarrow G$ be the \emph{diagonal map} and the \emph{inversion map}, respectively, that is, $\delta(g):=(g, g)$ and $i(h):=h^{-1}$. Denoting by $m$ the multiplication of $G$, we define $\tilde m: G \times G \rightarrow G$ to be the map $m \circ (\operatorname{id}_G , i)$, i.e., the map such that $\tilde m(g,h)=m(g,h^{-1})=gh^{-1}$, for all $(g,h) \in G \times G$. Then one can prove the following theorem \cite{STS1}. See also \cite{Faybusovich}.

\begin{thm} \cite{STS1}\label{thm:factorizationtheorem}
The map $I_R:G_R\rightarrow G\times G$, defined for all $g \in G_R$ by 
\[
	I_R (g)=(r_+,r_-)\circ\delta(g)=(r_+g,r_-g),
\]
is an \emph{embedding} of Lie groups. Moreover, the map $\tilde m\circ I_R: G_R \rightarrow G$, defined for all $g \in G_R$ by 
\[
\tilde m\circ I_R(g)=m(r_+g,({r_-g})^{-1})=r_+g({r_-g})^{-1},
\] 
is a local diffeomorphism from a suitable neighborhood of the identity $e\in G_R$ to a suitable neighborhood of the identity $e\in G$. In other words, any element $g \in G$ \emph{sufficiently close to the identity} admits a unique factorization as
\begin{equation}
\label{eq:factlie}
	g=g_+({g_-})^{-1},
\end{equation}
where $(g_+,g_-)\in\text{Im}\,I_R$.
\end{thm}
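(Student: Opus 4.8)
\quad
The plan is to reduce the whole statement to a computation of differentials at the identity followed by the inverse function theorem, the global ``embedding'' assertion being a separate and softer point. First I would note that $I_R=(r_+\times r_-)\circ\delta$ is a morphism of Lie groups, being the composite of the diagonal $\delta\colon G_R\to G_R\times G_R$ with the product morphism $r_+\times r_-$, so that it suffices to understand its differential at $e\in G_R$, which is the linear map $(R_+,R_-)\colon\mathfrak g_R\to\mathfrak g\oplus\mathfrak g$, $x\mapsto(R_+x,R_-x)$. Because $R_+-R_-=\operatorname{id}_V$ by \eqref{eq:rpm}, this map is injective: from $(R_+x,R_-x)=(0,0)$ one gets $x=R_+x-R_-x=0$. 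For a Lie group morphism the rank of the differential is constant, hence $I_R$ is an immersion. To upgrade this to an embedding I would identify $\ker I_R=\ker r_+\cap\ker r_-$, whose Lie algebra is $\operatorname{Ker}R_+\cap\operatorname{Ker}R_-=\{x:Rx=-x\}\cap\{x:Rx=x\}=\{0\}$; thus $\ker I_R$ is discrete, hence central in the connected group $G_R$, and one then invokes the simple-connectedness of $G_R$ together with the immersion property to conclude that $I_R$ is a homeomorphism onto its image $\operatorname{Im}I_R$.

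Next I would compute the differential of $\tilde m\circ I_R$ at $e$. Writing $\tilde m=m\circ(\operatorname{id}_G,i)$ and using the standard identities $dm|_{(e,e)}(X,Y)=X+Y$ and $di|_e=-\operatorname{id}_V$, the chain rule gives, for $x\in\mathfrak g_R=V$,
\[
	d(\tilde m\circ I_R)|_e(x)=R_+x-R_-x=x,
\]
once more by \eqref{eq:rpm}. Hence $d(\tilde m\circ I_R)|_e=\operatorname{id}_V$ is invertible, and the inverse function theorem produces neighbourhoods $U\ni e$ in $G_R$ and $W\ni e$ in $G$ such that $\tilde m\circ I_R\colon U\to W$ is a diffeomorphism.

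This local diffeomorphism is precisely the asserted factorization. Given $g\in W$, let $\gamma\in U$ be its unique preimage and set $g_\pm:=r_\pm\gamma$; then $(g_+,g_-)=I_R(\gamma)\in\operatorname{Im}I_R$ and $g=\tilde m(g_+,g_-)=g_+g_-^{-1}$. For uniqueness, any factorization $g=g_+'(g_-')^{-1}$ with $(g_+',g_-')\in\operatorname{Im}I_R$ sufficiently close to $(e,e)$ has the form $I_R(\gamma')$ for some $\gamma'\in U$ by injectivity of $I_R$, and then $\tilde m\circ I_R(\gamma')=g=\tilde m\circ I_R(\gamma)$ forces $\gamma'=\gamma$. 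I expect the only genuinely delicate point to be the global embedding claim --- turning the injective immersion $I_R$ into a topological embedding --- since this is where the hypothesis that $G_R$ be simply connected (rather than merely connected) really enters; everything else reduces to bookkeeping around the identity $d(\tilde m\circ I_R)|_e=\operatorname{id}_V$, which is the crux of the matter.
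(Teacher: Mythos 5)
The paper offers no proof of this theorem: it is quoted from \cite{STS1} (with a pointer to \cite{Faybusovich}), so there is nothing internal to compare against. Judged on its own terms, your treatment of the second and third assertions is correct and is the standard argument: $d(\tilde m\circ I_R)|_e = R_+ - R_-=\operatorname{id}_V$ by \eqref{eq:rpm}, the inverse function theorem yields the local diffeomorphism, and the factorization with its local uniqueness falls out exactly as you describe. (Note that the uniqueness you obtain is only for $(g_+',g_-')$ near $(e,e)$; that is also the right reading of the statement, since globally $\tilde m$ restricted to $\operatorname{Im}I_R$ can fail to be injective whenever $G_+\cap G_-$ is non-trivial.)

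The gap is in the embedding claim. From $\operatorname{Ker}R_+\cap\operatorname{Ker}R_-=\{0\}$ you correctly conclude that $I_R$ is an immersion with discrete (hence central) kernel, but the inference ``discrete kernel, $G_R$ simply connected, immersion $\Rightarrow$ homeomorphism onto the image'' is false: the homomorphism $\mathbb R\to SU(2)$, $t\mapsto\operatorname{diag}(e^{it},e^{-it})$, is an immersion of a simply connected group into a simply connected group with kernel $2\pi\mathbb Z$. Simple connectedness of the source controls $\pi_1$ of the source, not of the image; the kernel of $I_R$ computes $\pi_1$ of the integral subgroup of $G\times G$ with Lie algebra $\{(R_+x,R_-x):x\in V\}$, and that subgroup need not be simply connected (nor closed, which is a separate obstruction to being a topological embedding). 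To make this step honest one must either establish those properties of that integral subgroup or retreat to a local injectivity statement near $e$ --- which, fortunately, is all that the factorization assertion and its use in Section \ref{sect:factorThm} actually require.
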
 

\begin{rmk}  \cite{STS1}
The Lie bracket \eqref{eq:Rbra} defined by a  classical $r$-matrix on $\mathfrak g$ implies a corresponding linear Poisson structure $\{\cdot,\cdot\}_R$ on the dual $\mathfrak g^*$. It associates to each Casimir function with respect to the Poisson bracket $\{\cdot,\cdot\}_{\mathfrak g}$ a non-trivial first integral of the original dynamical system.
\end{rmk}

In the following we consider post-Lie algebras defined in terms of $r$-matrices. Let $\mathfrak g$ be a Lie algebra with $R \in \operatorname{End}_{\mathbb{K}}(\mathfrak g)$ a solution of \eqref{eq:impc}, and let $R_-$ be defined as in \eqref{eq:rpm}. 

\begin{thm}\cite{GuoBaiNi}
\label{thm:postLie1}
For any elements $x,y \in \mathfrak g$ the bilinear product 
\begin{equation}
\label{def:RBpostLie}
	x \triangleright y := [R_-x,y]
\end{equation} 
defines a post-Lie algebra structure on the Lie algebra $\mathfrak g$.
\end{thm}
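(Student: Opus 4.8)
The plan is to verify directly that the product $x \triangleright y := [R_- x, y]$ satisfies the two post-Lie axioms \eqref{postLie1} and \eqref{postLie2}, using only the Jacobi identity on $\mathfrak g$ and the morphism property \eqref{eq:RBE} for $R_-$. The first axiom is essentially free: since $\ell_{\triangleright}(x) = \operatorname{ad}_{R_- x}$ and the adjoint action of any element is a derivation of the Lie bracket, we get $x \triangleright [y,z] = [R_- x, [y,z]] = [[R_- x, y], z] + [y, [R_- x, z]] = [x \triangleright y, z] + [y, x \triangleright z]$ immediately from Jacobi. So the real content is axiom \eqref{postLie2}.

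For \eqref{postLie2}, first I would expand the associators. We have $\mathrm{a}_{\triangleright}(x,y,z) = x \triangleright (y \triangleright z) - (x \triangleright y) \triangleright z = [R_- x, [R_- y, z]] - [R_-[R_- x, y], z]$, and similarly for $\mathrm{a}_{\triangleright}(y,x,z)$ with $x$ and $y$ swapped. Subtracting, the ``inner'' terms $[R_- x, [R_- y, z]] - [R_- y, [R_- x, z]]$ combine by Jacobi into $[[R_- x, R_- y], z]$, and the ``outer'' terms give $-[R_-[R_- x, y], z] + [R_-[R_- y, x], z] = -[R_-([R_- x, y] + [x, R_- y]), z]$ after using bilinearity and antisymmetry of the bracket. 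Hence
\[
	\mathrm{a}_{\triangleright}(x,y,z) - \mathrm{a}_{\triangleright}(y,x,z) = \big[\, [R_- x, R_- y] - R_-([R_- x, y] + [x, R_- y])\,,\, z \,\big].
\]
Now the modified Yang--Baxter equation enters: by \eqref{eq:RBE} with the lower sign, $[R_- x, R_- y] = R_-([R_- x, y] + [x, R_- y] + [x,y])$, so the bracketed expression collapses to $[R_-([x,y]), z] = [x,y] \triangleright z$, which is exactly \eqref{postLie2}.

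The main obstacle is purely bookkeeping: keeping the signs straight in \eqref{eq:RBE} (the $\mp$ becomes $+[x,y]$ for $R_-$, which is what makes this a \emph{left} post-Lie structure rather than a right one) and correctly reorganizing the four outer associator terms using antisymmetry so that the $R_-$-morphism identity can be applied cleanly. There is no conceptual difficulty — the result is a one-line consequence of Jacobi plus \eqref{eq:RBE} once the associators are expanded — so I would present the computation compactly, perhaps remarking that choosing $R_+$ instead of $R_-$ in \eqref{def:RBpostLie} would yield the corresponding right post-Lie algebra, consistent with the sign discussion in Remark \ref{rem:notrem}.
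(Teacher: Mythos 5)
Your proposal is correct and follows exactly the paper's argument: axiom \eqref{postLie1} from $\ell_\triangleright(x)=\mathrm{ad}_{R_-x}$ being a derivation of $[\cdot,\cdot]$, and axiom \eqref{postLie2} from the Jacobi identity combined with the $R_-$ instance of \eqref{eq:RBE}; you merely spell out the associator computation that the paper leaves implicit. The signs are handled correctly, and your closing remark about $R_+$ giving the right post-Lie structure matches Remark \ref{rmk:doubleLiebracket-post-Lie}.
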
 

\begin{proof}
Axiom \eqref{postLie1} holds true since for all $x \in \mathfrak g$, the map $[R_-x,\cdot ]=\mathrm{ad}_{R_-x}: \mathfrak g \to \mathfrak g$ is a derivation with respect to the Lie bracket $[\cdot,\cdot]$. Axiom \eqref{postLie2} follows from identity \eqref{eq:RBE} together with the Jacobi identity.
\end{proof}

\begin{rmk} \label{rmk:doubleLiebracket-post-Lie}
The product $x\triangleleft y:=[R_+x,y]$, defined for all $x,y\in\mathfrak g$, yields on $\mathfrak g$ the structure of a right post-Lie algebra. In particular, note that 
\[
	x\triangleleft y	=[R_+x,y]=\frac{1}{2}[Rx,y]+\frac{1}{2}[x,y]
\]
which implies that $x\triangleleft y=x\triangleright y+[x,y]$, for all $x,y \in\mathfrak g$. Moreover, a simple computation shows that: $x\triangleright y-y\triangleright x+[x,y]=[x,y]_R=x\triangleleft y-y\triangleleft x-[x,y]$, for all $x,y\in\mathfrak g$, which implies that the Lie bracket in \eqref{postLie3} coincides with the Lie bracket in \eqref{eq:Rbra}, i.e., $[[\cdot,\cdot]]=[\cdot,\cdot]_R$.
\end{rmk}


{\bf{The universal enveloping algebra of a Post-Lie algebra.}}\quad In Proposition \ref{prop:post-lie} it is shown that any post-Lie algebra is endowed with two Lie brackets, $[\cdot,\cdot]$ and $[[\cdot,\cdot]]$, which are related in terms of the post-Lie product by identity \eqref{postLie3}. The relation between the corresponding universal enveloping algebras was explored in \cite{EFLMK}. In \cite{OudomGuin} similar results in the context of pre-Lie algebras and the symmetric algebra $\mathcal{S}(\mathfrak g)$ appeared. 

Recall that the universal enveloping algebra $\mathcal U(\mathfrak g)$ of a Lie algebra $\mathfrak g$ is a connected, filtered, noncommutative, cocommutative Hopf algebra with unit $\un$ \cite{Kassel,Sweedler}. Elements in $\mathcal U(\mathfrak g)$ are denoted as words $x_1 \cdots x_n$, and the letters $x_i \in \mathfrak g \hookrightarrow \mathcal U(\mathfrak g)$ are primitive, that is, the coproduct $\Delta x_i = x_i \otimes {\bf 1} + {\bf 1} \otimes x_i$. Its multiplicative extension defines the -- unshuffle -- coproduct on all of $\mathcal U(\mathfrak g)$. The counit $\epsilon : \mathcal U(\mathfrak g) \to \mathbb{K}$ and antipode $S: \mathcal U(\mathfrak g) \to \mathcal U(\mathfrak g) $ are defined by $\epsilon({\bf{1}})=1$ and zero else, respectively $S(x_1\cdots x_n):=(-1)^n x_n\cdots x_1$. In the following Sweedler's notation is used to denote the coproduct $\Delta A=A_{(1)} \otimes A_{(2)}$ for any $A$ in $\mathcal U(\mathfrak g)$. 

The next proposition summarises the results relevant for the present discussion of lifting the post-Lie algebra structure to $\mathcal U(\mathfrak g)$. In what follows we will denote with  $\triangleright$ both the original post-Lie product on $\mathfrak g$ and the one lifted to $\mathcal U(\mathfrak g)$.
\begin{prop}\cite{EFLMK}\label{prop:post1}
Let $A,B,C\in\mathcal U(\mathfrak g)$ and $x,y\in\mathfrak g \hookrightarrow \mathcal U(\mathfrak g),$ then there exists a unique extension of the post-Lie product from $\mathfrak g$ to $\mathcal U(\mathfrak g)$, given by:
\allowdisplaybreaks{
\begin{align}
	{\bf 1}\triangleright A &= A, \quad\ A \triangleright{\bf 1} = \epsilon (A){\bf 1}   \label{eq:pha1}\\
	\epsilon(A\triangleright B) &=\epsilon(A)\epsilon (B),\\
	\Delta (A\triangleright B)&=(A_{(1)}\triangleright B_{(1)}) \otimes (A_{(2)}\triangleright B_{(2)}),\\
	xA\triangleright B&=x\triangleright (A\triangleright B)-(x\triangleright A)\triangleright B\nonumber\\
	A\triangleright BC &=(A_{(1)}\triangleright B)(A_{(2)}\triangleright C).	\label{eq:pha2}
\end{align}}
\end{prop}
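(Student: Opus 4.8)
The plan is to construct the lifted product $\triangleright : \mathcal U(\mathfrak g) \otimes \mathcal U(\mathfrak g) \to \mathcal U(\mathfrak g)$ by a double induction on the word length of the two arguments, using the recursive formulas \eqref{eq:pha1}--\eqref{eq:pha2} as the \emph{definition}, and then check that the resulting operation is well-defined, independent of the order in which letters are peeled off, and that it satisfies all the stated identities. First I would set up the base of the induction: formulas \eqref{eq:pha1} force $\un \triangleright \un = \un$ and more generally $\un \triangleright A = A$, $A \triangleright \un = \epsilon(A)\un$. For a single letter $x \in \mathfrak g$ acting on a word $B = b_1 \cdots b_n$, the formula $x \triangleright BC = (x \triangleright B) C + B(x \triangleright C)$ (the specialization of \eqref{eq:pha2} using that $x$ is primitive) together with the original product on $\mathfrak g$ determines $x \triangleright B$ uniquely: it is the derivation extension of $\ell_\triangleright(x)$, which is legitimate precisely because \eqref{postLie1} says $\ell_\triangleright(x)$ is a derivation of $[\cdot,\cdot]$, hence descends to a well-defined derivation of $\mathcal U(\mathfrak g)$.

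The inductive step is where the real work lies. For a general word $A = x A'$ with $x \in \mathfrak g$ and $A'$ shorter, the relation $xA' \triangleright B = x \triangleright(A' \triangleright B) - (x \triangleright A')\triangleright B$ expresses $A \triangleright B$ in terms of the product of strictly shorter left arguments (note $x \triangleright A'$ has the same length as $A'$, and we already know how a single letter acts, and how shorter words act by induction). So the operation is \emph{determined}; the content is that it is \emph{well-defined}, i.e. the answer does not depend on which letter of $A$ we choose to split off first, and does not depend on the chosen presentation of $A'$ as a word modulo the relations $xy - yx = [x,y]$ in $\mathcal U(\mathfrak g)$. Concretely one must verify the compatibility $(xy)A'' \triangleright B = (yx)A'' \triangleright B + ([x,y]A'')\triangleright B$ for letters $x,y$; expanding both sides via the recursion reduces, after cancellation, exactly to axiom \eqref{postLie2} (the associator identity) applied letter-by-letter, combined with the already-established derivation property of single letters. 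This is the main obstacle: organizing the bookkeeping so that the post-Lie axiom \eqref{postLie2} appears precisely where the two reorderings disagree. The cleanest way to package this is to recognize the recursion $xA' \triangleright B = x \triangleright(A'\triangleright B) - (x\triangleright A')\triangleright B$ as saying that $A \mapsto (B \mapsto A \triangleright B)$ is an algebra morphism from $\mathcal U(\mathfrak g)$ into $\operatorname{End}_{\mathbb K}(\mathcal U(\mathfrak g))$ for a suitably twisted product — equivalently, that $\triangleright$ makes $\mathcal U(\mathfrak g)$ a module-algebra — and to invoke the universal property of $\mathcal U(\mathfrak g)$ to produce the morphism from the Lie-algebra-level data, which is exactly \eqref{postLie1}--\eqref{postLie2}.

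Once existence and well-definedness are settled, the remaining identities are routine inductions. The counit identity $\epsilon(A \triangleright B) = \epsilon(A)\epsilon(B)$ follows by applying $\epsilon$ to the recursions and using that $\epsilon$ is an algebra morphism. The coproduct identity $\Delta(A \triangleright B) = (A_{(1)} \triangleright B_{(1)})\otimes(A_{(2)} \triangleright B_{(2)})$ is proved by induction on the length of $A$: for $A = x$ a primitive letter it is a direct check using $\Delta(xC) = xC_{(1)}\otimes C_{(2)} + C_{(1)}\otimes xC_{(2)}$ and \eqref{eq:pha2}; for the inductive step one feeds the hypothesis through the recursion $xA' \triangleright B = x \triangleright(A'\triangleright B) - (x\triangleright A')\triangleright B$, using that $\Delta$ is an algebra morphism and that $\triangleright$ on $\mathcal U(\mathfrak g)\otimes\mathcal U(\mathfrak g)$ is defined componentwise. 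Finally $A \triangleright BC = (A_{(1)}\triangleright B)(A_{(2)}\triangleright C)$ is again an induction on $|A|$ with base case the derivation property of letters and inductive step via the same recursion together with the coproduct identity just established. I would state these verifications compactly, since each is a mechanical unwinding of the defining relations, and emphasize instead the one genuinely structural point — that \eqref{postLie1} and \eqref{postLie2} are exactly what is needed for the lift to exist at all.
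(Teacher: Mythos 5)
Your proposal is correct and takes essentially the same route as the paper, whose proof (deferring details to \cite{EFLMK}) is precisely an induction on the length of monomials in $\mathcal U(\mathfrak g)$, with the recursion $xA\triangleright B=x\triangleright(A\triangleright B)-(x\triangleright A)\triangleright B$ serving as the definition and axiom \eqref{postLie2} guaranteeing compatibility with the relations $xy-yx=[x,y]$. You correctly isolate the one nontrivial point (well-definedness on $\mathcal U(\mathfrak g)$ rather than on the tensor algebra), so nothing further is needed.
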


\begin{proof}
The proof of Proposition \ref{prop:post1} goes by induction on the length of monomials in $\mathcal U(\mathfrak g)$. 
\end{proof}

Note that \eqref{eq:pha1} together with \eqref{eq:pha2} imply that the extension of the post-Lie product from $\mathfrak g$ to $\mathcal U(\mathfrak g)$  yields a linear map $d:\mathfrak g \rightarrow \operatorname{Der}\big(\mathcal U(\mathfrak g)\big),$ defined via $d(x)(x_1\cdots x_n):=\sum_{i=1}^nx_1\cdots (x\triangleright x_i)\cdots x_n$, for any word $x_1\cdots x_n \in \mathcal U(\mathfrak g)$. A simple computation shows that, in general, this map is not a morphism of Lie algebras. Together with Proposition \ref{prop:post1} one can prove the next statement.

\begin{prop}\cite{EFLMK}\label{prop:post2}
Let $A,B,C\in\mathcal U(\mathfrak g)$ 
\allowdisplaybreaks{
 \begin{align}
	A\triangleright (B\triangleright C)&=(A_{(1)}(A_{(2)}\triangleright B))\triangleright C. \label{last}
\end{align}}
 \end{prop}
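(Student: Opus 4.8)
The plan is to prove \eqref{last} by induction on the length of the monomial $A$, reducing everything to the defining relations of the lifted post-Lie product collected in Proposition \ref{prop:post1}. The base case $A = \un$ is immediate: the left-hand side becomes $\un \triangleright (B \triangleright C) = B \triangleright C$ by \eqref{eq:pha1}, while the right-hand side becomes $(\un_{(1)}(\un_{(2)} \triangleright B)) \triangleright C = (\un (\un \triangleright B)) \triangleright C = B \triangleright C$, again by \eqref{eq:pha1} and the fact that $\un$ is grouplike. Since $\mathcal U(\mathfrak g)$ is generated by $\mathfrak g$, for the inductive step it suffices to treat $A = xA'$ with $x \in \mathfrak g$ and $A'$ a monomial of strictly shorter length, assuming the identity holds for $A'$ (and, if needed, for all shorter monomials).

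For the inductive step I would start from the left-hand side $xA' \triangleright (B \triangleright C)$ and apply the "derivation-type" relation $xA \triangleright B = x \triangleright (A \triangleright B) - (x \triangleright A) \triangleright B$ (the fourth displayed line of Proposition \ref{prop:post1}), with $B$ there replaced by $B \triangleright C$. This yields
\[
	xA' \triangleright (B \triangleright C) = x \triangleright \big(A' \triangleright (B \triangleright C)\big) - (x \triangleright A') \triangleright (B \triangleright C).
\]
On the first term I apply the inductive hypothesis to rewrite $A' \triangleright (B \triangleright C) = (A'_{(1)}(A'_{(2)} \triangleright B)) \triangleright C$. On the second term, $x \triangleright A'$ has the same length as $A'$, so I cannot directly invoke the inductive hypothesis on it; instead I will want to re-expand it once more. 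The key identities I expect to need along the way are: the compatibility of $\triangleright$ with the product, $A \triangleright (BC) = (A_{(1)} \triangleright B)(A_{(2)} \triangleright C)$ from \eqref{eq:pha2} (to handle $x \triangleright$ applied to the product $A'_{(1)}(A'_{(2)} \triangleright B)$), the compatibility of $\triangleright$ with the coproduct $\Delta(A \triangleright B) = (A_{(1)} \triangleright B_{(1)}) \otimes (A_{(2)} \triangleright B_{(2)})$, and the fact that in the cocommutative Hopf algebra $\mathcal U(\mathfrak g)$ one has $\Delta(xA') = xA'_{(1)} \otimes A'_{(2)} + A'_{(1)} \otimes xA'_{(2)}$ since $x$ is primitive. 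Using these, I would expand $x \triangleright \big((A'_{(1)}(A'_{(2)} \triangleright B)) \triangleright C\big)$ — note one may first need the "associativity in the first slot" form, i.e. apply the fourth relation of Proposition \ref{prop:post1} again to move $x$ inside — and then recombine the resulting terms using the Leibniz-type splitting of $\Delta$ on $xA'$ so that they assemble into $\big((xA')_{(1)}((xA')_{(2)} \triangleright B)\big) \triangleright C$.

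The main obstacle I anticipate is purely bookkeeping: matching Sweedler components after several applications of the coproduct, and in particular handling the term $(x \triangleright A') \triangleright (B \triangleright C)$, which has no smaller-length monomial in the first slot and therefore must be reabsorbed rather than attacked by induction. The cleanest route is probably to apply the relation $uA \triangleright B = u \triangleright(A \triangleright B) - (u\triangleright A)\triangleright B$ a second time — this time reading it backwards — to combine $x \triangleright \big(A' \triangleright (B\triangleright C)\big)$ and $(x\triangleright A')\triangleright(B\triangleright C)$ into $(xA') \triangleright$ acting directly on something, before the final cosimplification. Throughout, I would lean on the observation already made in the text that $d(x)(\,\cdot\,)$ is a derivation of $\mathcal U(\mathfrak g)$ and on cocommutativity to freely swap tensor factors when regrouping; this is the step where care is needed but no genuinely new idea is required, so the proof reduces, as the authors indicate for the companion propositions, to a length induction using Proposition \ref{prop:post1}.
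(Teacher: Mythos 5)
Your strategy---induction on the length of $A$ using the relations of Proposition \ref{prop:post1}---is exactly the route the paper intends (it gives no independent proof, only the remark that the statement follows from Proposition \ref{prop:post1}), and the computation does go through. One point in your sketch needs correcting, though: you claim that the inductive hypothesis cannot be applied to $(x\triangleright A')\triangleright(B\triangleright C)$ because $x\triangleright A'$ has the same length as $A'$. But the induction is on the length of $A=xA'$, and $x\triangleright A'$ lies in a sum of monomials of length $|A'|=|A|-1<|A|$ (the map $d(x)$ replaces one letter by another letter of $\mathfrak g$), so the hypothesis applies to it directly. The workaround you propose instead---recombining $x\triangleright\bigl(A'\triangleright(B\triangleright C)\bigr)$ with $(x\triangleright A')\triangleright(B\triangleright C)$ via the fourth relation read backwards---is circular as stated, since it merely reconstitutes $(xA')\triangleright(B\triangleright C)$. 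The clean argument is: apply the inductive hypothesis to both terms, use $\Delta(x\triangleright A')=(x\triangleright A'_{(1)})\otimes A'_{(2)}+A'_{(1)}\otimes(x\triangleright A'_{(2)})$ on the second, and on the first use the fourth relation backwards in the form $x\triangleright(D\triangleright C)=(xD)\triangleright C+(x\triangleright D)\triangleright C$ with $D=A'_{(1)}(A'_{(2)}\triangleright B)$, expanding $x\triangleright D$ by \eqref{eq:pha2} and the fourth relation once more. The unwanted terms then cancel pairwise and what survives is
\begin{equation}
\bigl(xA'_{(1)}(A'_{(2)}\triangleright B)\bigr)\triangleright C+\bigl(A'_{(1)}((xA'_{(2)})\triangleright B)\bigr)\triangleright C,
\end{equation}
which is precisely $\bigl((xA')_{(1)}((xA')_{(2)}\triangleright B)\bigr)\triangleright C$ by primitivity of $x$. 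With that repair your proof is complete and coincides with the paper's.
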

 
It turns out that identity \eqref{last} in Proposition \ref{prop:post2} can be written $A\triangleright (B\triangleright C) = m_\ast(A\otimes B) \triangleright C$, where the product $m_\ast:\mathcal U(\mathfrak g)\otimes\mathcal U(\mathfrak g)\rightarrow\mathcal U(\mathfrak g)$ is defined by
\begin{equation}
\label{eq:postLieU}
	m_\ast(A\otimes B)= A\ast B:=A_{(1)}(A_{(2)}\triangleright B).
\end{equation}

\begin{thm}\cite{EFLMK}\label{thm:KLM0}
The product defined in \eqref{eq:postLieU} is non-commutative, associative and unital. Moreover, $\mathcal U_*(\mathfrak g):=(\mathcal U(\mathfrak g),m_\ast,{\bf 1},\Delta,\epsilon,S_\ast)$ is a co-commutative Hopf algebra, whose unit, co-unit and coproduct coincide with those defining the usual Hopf algebra structure on $\mathcal U(\mathfrak g)$. The antipode $S_\ast$ is given uniquely by the defining equations $
	m_\ast\circ(\operatorname{id}\otimes S_\ast)\circ\Delta
	={\bf 1}\circ\epsilon
	=m_\ast\circ(S_\ast\otimes\operatorname{id})\circ\Delta.$
More precisely
\begin{equation}
	S_\ast (x_1\cdots x_n)=-x_1\cdots x_n-\sum_{k=1}^{n-1}
	\sum_{\sigma\in\Sigma_{k,n-k}}x_{\sigma(1)}\cdots x_{\sigma(k)}\ast 
	S(x_{\sigma(k+1)}\cdots x_{\sigma(n)}),\label{eq:antipodests}
\end{equation}
for every $x_1\cdots x_n\in\mathcal U_n(\mathfrak g)$ and for all $n\geq 1$.
\end{thm}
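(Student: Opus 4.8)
The plan is to establish the four asserted facts about $m_\ast$—associativity, unitality, non-commutativity, and the Hopf-algebra compatibility—by exploiting the defining formula $A \ast B = A_{(1)}(A_{(2)} \triangleright B)$ together with the axioms collected in Propositions \ref{prop:post1} and \ref{prop:post2}. First I would dispense with unitality: $\un \ast B = \un_{(1)}(\un_{(2)} \triangleright B) = \un(\un \triangleright B) = B$ using \eqref{eq:pha1} and the fact that $\un$ is group-like, while $A \ast \un = A_{(1)}(A_{(2)} \triangleright \un) = A_{(1)}\epsilon(A_{(2)})\un = A$ by the counit axiom. Non-commutativity is immediate because on $\mathfrak g$ the product $x \ast y = xy + x \triangleright y$ need not be symmetric (indeed its antisymmetrization recovers $\llbracket\cdot,\cdot\rrbracket$, which differs from $[\cdot,\cdot]$ in general).

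The heart of the proof is associativity, $(A \ast B) \ast C = A \ast (B \ast C)$. I would expand both sides using the definition and Sweedler notation. The left side is $(A \ast B)_{(1)}\big((A \ast B)_{(2)} \triangleright C\big)$; here I need the coproduct of $A \ast B$, which by the multiplicativity of $\Delta$ with respect to the \emph{ordinary} product and the third identity of Proposition \ref{prop:post1} (compatibility of $\triangleright$ with $\Delta$) gives $\Delta(A \ast B) = (A_{(1)} \ast B_{(1)}) \otimes (A_{(2)} \ast B_{(2)})$, i.e. $\ast$ is itself a coalgebra morphism. So the left side becomes $A_{(1)}(A_{(2)} \triangleright B_{(1)})\big((A_{(3)} \ast B_{(2)}) \triangleright C\big)$. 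The right side is $A_{(1)}\big(A_{(2)} \triangleright (B \ast C)\big) = A_{(1)}\big(A_{(2)} \triangleright (B_{(1)}(B_{(2)} \triangleright C))\big)$, and now I apply \eqref{eq:pha2} to split $A_{(2)} \triangleright (B_{(1)}(B_{(2)} \triangleright C)) = (A_{(2)} \triangleright B_{(1)})\big(A_{(3)} \triangleright (B_{(2)} \triangleright C)\big)$, and then Proposition \ref{prop:post2}, $A_{(3)} \triangleright (B_{(2)} \triangleright C) = (A_{(3)}(A_{(4)} \triangleright B_{(2)})) \triangleright C = (A_{(3)} \ast B_{(2)}) \triangleright C$. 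Matching the two expressions then requires only re-indexing the Sweedler legs (using coassociativity), so the two sides coincide. The main obstacle is bookkeeping: keeping the five or six Sweedler components aligned correctly and making sure each application of \eqref{eq:pha2} and \eqref{last} is to the correctly-grouped subexpression; there is no conceptual difficulty once the coalgebra-morphism property of $\ast$ is in hand.

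Next I would verify that $\mathcal U_\ast(\mathfrak g) = (\mathcal U(\mathfrak g), m_\ast, \un, \Delta, \epsilon, S_\ast)$ is a bialgebra. Co-commutativity, counitality and coassociativity of $(\Delta,\epsilon)$ are inherited unchanged from $\mathcal U(\mathfrak g)$. That $\Delta$ and $\epsilon$ are algebra morphisms for $m_\ast$ is exactly the coalgebra-morphism property of $\ast$ established above plus the second identity $\epsilon(A \ast B) = \epsilon(A \triangleright B)\cdot(\text{correction})$—more precisely $\epsilon(A \ast B) = \epsilon(A_{(1)})\epsilon(A_{(2)} \triangleright B) = \epsilon(A_{(1)})\epsilon(A_{(2)})\epsilon(B) = \epsilon(A)\epsilon(B)$ using the $\epsilon$-multiplicativity of $\triangleright$. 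For the antipode I would argue by the standard fact that a connected filtered bialgebra automatically has an antipode: $\mathcal U_\ast(\mathfrak g)$ is still connected and filtered by the same filtration (since $A \ast B \in \mathcal U_{\le p+q}$ when $A \in \mathcal U_{\le p}$, $B \in \mathcal U_{\le q}$, because $\triangleright$ is filtration-nonincreasing by \eqref{eq:pha1}–\eqref{eq:pha2}), so $S_\ast = \sum_{k \ge 0}(\un\circ\epsilon - \operatorname{id})^{\ast k}$ converges and is a two-sided convolution inverse of $\operatorname{id}$, giving $m_\ast\circ(\operatorname{id}\otimes S_\ast)\circ\Delta = \un\circ\epsilon = m_\ast\circ(S_\ast\otimes\operatorname{id})\circ\Delta$ as claimed.

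Finally, for the closed formula \eqref{eq:antipodests} I would proceed by induction on $n$. Apply the recursion $m_\ast\circ(\operatorname{id}\otimes S_\ast)\circ\Delta(x_1\cdots x_n) = 0$ for $n \ge 1$; expanding the unshuffle coproduct, the only term with $S_\ast$ applied to the full word $x_1\cdots x_n$ comes from the $\un\otimes(x_1\cdots x_n)$ summand, contributing $\un \ast S_\ast(x_1\cdots x_n) = S_\ast(x_1\cdots x_n)$, while the term from $(x_1\cdots x_n)\otimes\un$ contributes $x_1\cdots x_n \ast S_\ast(\un) = x_1\cdots x_n$ (since $S_\ast(\un)=\un$), and the remaining terms, indexed by proper shuffles $\sigma \in \Sigma_{k,n-k}$ with $1 \le k \le n-1$, contribute $\sum_k\sum_\sigma x_{\sigma(1)}\cdots x_{\sigma(k)} \ast S_\ast(x_{\sigma(k+1)}\cdots x_{\sigma(n)})$. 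Solving for $S_\ast(x_1\cdots x_n)$ and substituting the inductive hypothesis—noting that $S_\ast$ and the ordinary antipode $S$ agree on words of length $< n$ up to $\ast$-reorganization, or more directly observing that one may replace $S_\ast$ by $S$ on the shorter factors because the recursion is linear and $S$ also satisfies a parallel recursion—yields \eqref{eq:antipodests}. The subtle point here is justifying the replacement of $S_\ast$ by $S$ inside the sum; I would handle this by showing both $S$ and $S_\ast$ are the unique convolution inverses of $\operatorname{id}$ in their respective (but coalgebra-identical) settings and that the stated formula for $S_\ast$ is consistent, or alternatively by a direct second induction verifying that the right-hand side of \eqref{eq:antipodests} satisfies the defining convolution identity.
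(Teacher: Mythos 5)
Your verification is, up to the very last step, correct and complete, but it takes a genuinely different route from the proof this paper actually supplies. You check the Hopf axioms directly from the lifted post-Lie product: unitality from \eqref{eq:pha1}, the comorphism property $\Delta(A\ast B)=(A_{(1)}\ast B_{(1)})\otimes(A_{(2)}\ast B_{(2)})$ (which, note, silently uses cocommutativity of $\Delta$ to reorder the Sweedler legs of $A$), and associativity by combining \eqref{eq:pha2} with \eqref{last}; this is the Oudom--Guin-style computation of the cited reference \cite{EFLMK}. The paper's own argument (the paragraph ``Another proof of the Theorem \ref{thm:KLM0}'' in Section \ref{sect:anotherHA}) instead takes the linear isomorphism $F$ of \eqref{eq:sigma}, defines $m_\ast$ as the push-forward \eqref{eq:push} of $m_{\mathfrak g_R}$, and transports the whole Hopf structure of $\mathcal U(\mathfrak g_R)$ across $F$, so that associativity and the compatibility \eqref{eq:copr} come for free from $F$ being an algebra and coalgebra isomorphism. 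The trade-off: the paper's route avoids all Sweedler bookkeeping but, as written, only covers post-Lie algebras arising from a classical $r$-matrix (since $F$ is built from $R_\pm$), whereas your direct verification works for an arbitrary post-Lie algebra. Your existence argument for $S_\ast$ via the connected filtered bialgebra structure is also sound, since $\deg(A\triangleright B)\le\deg B$ makes $m_\ast$ filtration-preserving.

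The one genuine gap is the closed formula. The convolution identity $m_\ast\circ(\operatorname{id}\otimes S_\ast)\circ\Delta=\un\circ\epsilon$ applied to the word $x_1\cdots x_n$ gives exactly what you derive, namely
\[
S_\ast(x_1\cdots x_n)=-x_1\cdots x_n-\sum_{k=1}^{n-1}\sum_{\sigma\in\Sigma_{k,n-k}} x_{\sigma(1)}\cdots x_{\sigma(k)}\ast S_\ast\big(x_{\sigma(k+1)}\cdots x_{\sigma(n)}\big),
\]
with $S_\ast$, not $S$, on the shorter factors. The replacement of $S_\ast$ by $S$ that you then attempt does not go through: already $S_\ast(x_ix_j)-S(x_ix_j)=x_i\triangleright x_j+x_j\triangleright x_i$, which is nonzero in general, so at $n=3$ the two versions of the right-hand side differ by $\sum_i x_i\ast(x_j\triangleright x_k+x_k\triangleright x_j)$, and this does not cancel (take $x_1=x_2=x_3=x$ to get $6\,x\ast(x\triangleright x)$). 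Neither of your two proposed justifications closes this, because the parallel recursion that $S$ satisfies is the one for the convolution inverse with respect to $m$, not $m_\ast$. The resolution is that \eqref{eq:antipodests} must be read with $S_\ast$ in place of $S$ on the shorter factors: the sentence preceding it defines $S_\ast$ precisely by the convolution equations whose degreewise unrolling is your recursion. With that reading, your derivation already \emph{is} the proof of \eqref{eq:antipodests} and the replacement step should simply be deleted.
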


\noindent Equation \eqref{eq:antipodests} becomes clear by noting that since elements $x \in \mathfrak g$ are primitive and $\Delta$ is an algebra morphism with respect to the product \eqref{eq:postLieU}, one deduces  

\begin{lem}\label{lem:coprodast}
For $x_1\ast\cdots\ast x_n \in \mathcal U_*(\mathfrak g)$
\allowdisplaybreaks{
\begin{eqnarray*}
	\Delta(x_1\ast\cdots\ast x_n)
	&=&x_1\ast\cdots\ast x_n\otimes {\bf{1}} \nonumber +{\bf{1}}\otimes x_1\ast \cdots \ast x_n\\
	&+&\sum_{k=1}^{n-1}\sum_{\sigma\in\Sigma_{k,n-k}}
			x_{\sigma(1)} \ast \cdots \ast x_{\sigma(k)}\otimes x_{\sigma(k+1)}\ast \cdots \ast x_{\sigma(n)}.
\end{eqnarray*}}
Here $\Sigma_{k,n-k} \subset \Sigma_n$ denotes the set of permutations in the symmetric group $ \Sigma_n$  of n elements $[n]:=\{1, 2, \ldots, n\}$ such that ${\sigma(1)}< \cdots <\sigma(k)$ and ${\sigma(k+1)}< \cdots <{\sigma(n)}$.
\end{lem}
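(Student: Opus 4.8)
The plan is to prove the coproduct formula for $\ast$-powers of primitive elements by induction on $n$, exploiting the two structural facts established earlier: that each $x_i \in \mathfrak g$ is primitive in the \emph{same} coalgebra $(\mathcal U(\mathfrak g),\Delta,\epsilon)$ underlying $\mathcal U_*(\mathfrak g)$ (Theorem \ref{thm:KLM0}), and that $\Delta$ is an algebra morphism for the product $m_\ast$, i.e. $\Delta(A\ast B) = (A_{(1)}\ast B_{(1)})\otimes(A_{(2)}\ast B_{(2)})$. The base case $n=1$ is just primitivity of $x_1$. For the inductive step I would write $x_1\ast\cdots\ast x_n = (x_1\ast\cdots\ast x_{n-1})\ast x_n$, apply the morphism property of $\Delta$, substitute the induction hypothesis for $\Delta(x_1\ast\cdots\ast x_{n-1})$ together with $\Delta x_n = x_n\otimes\un + \un\otimes x_n$, and then expand the resulting tensor product of sums.

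Concretely, after substitution the right-hand side is a sum over all shuffle-type decompositions of $\{1,\dots,n-1\}$ into an ordered pair of increasing subwords, tensored against either $(x_n\otimes\un)$ or $(\un\otimes x_n)$, with the $\ast$-product distributing into each tensor leg via $m_\ast$. Inserting $x_n$ into the left leg of a $(k,n-1-k)$-shuffle produces a $(k+1,n-1-k)$-shuffle of $[n]$ in which $n$ is the last element of the first block; inserting it into the right leg produces a $(k,n-k)$-shuffle of $[n]$ in which $n$ is the last element of the second block. Since every permutation $\sigma\in\Sigma_{k,n-k}$ of $[n]$ has $n$ appearing either at the end of its first increasing block or at the end of its second, these two families of terms account for exactly the shuffles of $[n]$, with the boundary terms $x_1\ast\cdots\ast x_n\otimes\un$ and $\un\otimes x_1\ast\cdots\ast x_n$ arising from the cases $k=n-1$ (right-leg insertion into the top term) and $k=0$ (left-leg insertion into the bottom term). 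This is the standard reindexing argument; I would present it as a bijection between $\bigsqcup_k \Sigma_{k,n-1-k}\times\{L,R\}$ and $\Sigma_{0,n}\sqcup\cdots\sqcup\Sigma_{n,0}$ rather than grinding through the sums termwise.

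I expect the only real subtlety — not a deep one — to be bookkeeping the empty-block cases so that the two ``primitive'' boundary terms appear with coefficient exactly one and are not double-counted; in particular one must check that when $x_n$ is inserted into an empty leg the convention $\un\ast(x_{\sigma(1)}\ast\cdots) = x_{\sigma(1)}\ast\cdots$ (which holds since $\un$ is the $m_\ast$-unit, Theorem \ref{thm:KLM0}) gives the stated normalization. Everything else is formal: associativity and unitality of $m_\ast$ let us treat $x_1\ast\cdots\ast x_n$ as an unambiguous expression, and the coalgebra axioms are untouched by the deformation. Finally I would note that \eqref{eq:antipodests} follows by feeding this coproduct formula into the recursion $m_\ast\circ(\operatorname{id}\otimes S_\ast)\circ\Delta = \un\circ\epsilon$ and solving for $S_\ast(x_1\cdots x_n)$ term by term, using $S_\ast = S$ on the lower-degree pieces by the same induction.
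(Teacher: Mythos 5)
Your proposal is correct and follows the same route the paper indicates (the paper only sketches it in the sentence preceding the lemma): primitivity of the $x_i$ together with the fact that $\Delta$ is an algebra morphism for $m_\ast$ yields the unshuffle formula by the standard induction/reindexing on shuffles. Your more explicit bookkeeping of the bijection $\bigsqcup_k \Sigma_{k,n-1-k}\times\{L,R\}\to\bigsqcup_k\Sigma_{k,n-k}$ is a valid filling-in of the details the paper leaves implicit.
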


The relation between the Hopf algebra $\mathcal U_*(\mathfrak g)$ in Theorem \ref{thm:KLM0} and the universal enveloping algebra $\mathcal U(\overline{\mathfrak g})$ corresponding to the Lie algebra $\overline{\mathfrak g}$ is the content of the following theorem.

\begin{thm}\cite{EFLMK}\label{thm:KLM}
$\mathcal U_*(\mathfrak g)$ is isomorphic, as a Hopf algebra, to $\mathcal U(\overline{\mathfrak g})$. More precisely, the identity map $\operatorname{id}:\overline{\mathfrak g}\rightarrow\mathfrak g$ admits a unique extension to an isomorphism of Hopf algebras $\phi:\mathcal U(\overline{\mathfrak g})\rightarrow \mathcal U_*(\mathfrak g)$.
\end{thm}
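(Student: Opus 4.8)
The plan is to construct the isomorphism $\phi$ via the universal property of $\mathcal U(\overline{\mathfrak g})$ and then verify that it is invertible and compatible with all the Hopf structure maps. First I would observe that $\mathcal U_*(\mathfrak g) = (\mathcal U(\mathfrak g), m_*, {\bf 1}, \Delta, \epsilon, S_*)$ is, by Theorem \ref{thm:KLM0}, an associative unital algebra, so the inclusion $\overline{\mathfrak g} = (V, \llbracket\cdot,\cdot\rrbracket) \hookrightarrow \mathcal U(\mathfrak g)$ is a morphism of Lie algebras into the commutator Lie algebra of $(\mathcal U(\mathfrak g), m_*)$ precisely when $x *y - y * x = \llbracket x, y\rrbracket = x \triangleright y - y \triangleright x + [x,y]$ for all $x, y \in \mathfrak g$. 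This is a short computation: by \eqref{eq:postLieU} and primitivity of $x$, one has $x * y = x_{(1)}(x_{(2)}\triangleright y) = x\cdot y + {\bf 1}\cdot(x \triangleright y) = xy + x\triangleright y$ in $\mathcal U(\mathfrak g)$, where $xy$ denotes the concatenation product; hence $x * y - y * x = (xy - yx) + (x\triangleright y - y \triangleright x) = [x,y] + x\triangleright y - y \triangleright x = \llbracket x, y\rrbracket$. By the universal property of $\mathcal U(\overline{\mathfrak g})$ this yields a unique algebra morphism $\phi : \mathcal U(\overline{\mathfrak g}) \to \mathcal U_*(\mathfrak g)$ extending $\operatorname{id}_V$.

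Next I would check that $\phi$ is a Hopf algebra morphism. Since $\mathcal U(\overline{\mathfrak g})$ is generated as an algebra by $\overline{\mathfrak g}$ and $\phi$ is an algebra morphism, it suffices to verify compatibility with the coproduct, counit (and then the antipode follows automatically, the antipode being unique). For $x \in \overline{\mathfrak g}$, $\Delta_{\mathcal U(\overline{\mathfrak g})} x = x\otimes {\bf 1} + {\bf 1}\otimes x$, and its image under $\phi\otimes\phi$ is $x \otimes {\bf 1} + {\bf 1}\otimes x$, which is exactly $\Delta x$ in $\mathcal U_*(\mathfrak g)$ (the coproduct of $\mathcal U_*(\mathfrak g)$ agrees with that of $\mathcal U(\mathfrak g)$, by Theorem \ref{thm:KLM0}); since both $\Delta \circ \phi$ and $(\phi\otimes\phi)\circ \Delta_{\mathcal U(\overline{\mathfrak g})}$ are algebra morphisms (using that $\Delta$ is an algebra morphism for $m_*$, again from Theorem \ref{thm:KLM0}) agreeing on generators, they agree everywhere; similarly for $\epsilon$.

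Finally I would argue that $\phi$ is bijective. The cleanest route is a filtration/associated-graded argument: both $\mathcal U(\overline{\mathfrak g})$ and $\mathcal U_*(\mathfrak g)$ carry the standard increasing filtration by word length (for $\mathcal U_*(\mathfrak g)$ this is the filtration inherited from $\mathcal U(\mathfrak g)$, which is respected by $m_*$ since $A * B = A_{(1)}(A_{(2)}\triangleright B)$ and $\triangleright$ does not increase length). From $x * y = xy + x\triangleright y$ one sees that $\phi$ is filtration-preserving and that the induced map on associated graded algebras is the identity on $\operatorname{gr}\mathcal U(\overline{\mathfrak g}) \cong \mathcal S(V) \cong \operatorname{gr}\mathcal U_*(\mathfrak g)$ (both gradeds being the symmetric algebra by PBW, since the degree-$1$ part of $x*y - y*x$ vanishes). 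A filtered morphism inducing an isomorphism on the associated graded is itself an isomorphism, so $\phi$ is a Hopf algebra isomorphism. Uniqueness of the extension is immediate from the universal property used in the first step.

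I expect the main obstacle to be organizing the bijectivity argument cleanly: one must be careful that the word-length filtration on $\mathcal U_*(\mathfrak g)$ is genuinely compatible with $m_*$ and that the associated-graded product is commutative, so that PBW identifies both associated gradeds with $\mathcal S(V)$; the key input is that $\triangleright$ is length-non-increasing and that on the top-degree (symbol) level $A * B$ reduces to the concatenation $AB$, which makes the symbol of $\phi$ the identity. Everything else is a routine check on generators using Theorem \ref{thm:KLM0} and the explicit formula \eqref{eq:postLieU}.
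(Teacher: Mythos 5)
Your proposal is correct and complete. The key computation $x*y = x_{(1)}(x_{(2)}\triangleright y) = xy + x\triangleright y$, hence $x*y-y*x=\llbracket x,y\rrbracket$, is exactly what makes $\overline{\mathfrak g}\hookrightarrow \mathcal U_*(\mathfrak g)$ a Lie algebra map into the commutator algebra, and the universal property then produces the unique algebra morphism $\phi$; coproduct and counit compatibility on primitive generators plus the automatic antipode compatibility of bialgebra morphisms settles the Hopf part; and your filtered/associated-graded argument for bijectivity is sound, since $m_*$ respects the length filtration (the only summand of $\Delta A$ contributing in top degree is $A\otimes\mathbf 1$, so the symbol of $A*B$ is the concatenation $AB$), whence $\operatorname{gr}\phi$ is the identity of $\mathcal S(V)$ by PBW.

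The paper itself does not reprove this statement in the text (it is imported from the reference where it was first established, following the Oudom--Guin strategy); what it supplies instead is a \emph{constructive} description of the same isomorphism: the recursion \eqref{eq:PHIrecursion1}, the closed formula \eqref{eq:PHIrecursion2} expressing $\phi(x_1.\cdots.x_n)$ as a sum over set partitions of $[n]$ (counted by Bell numbers), and the explicit recursion \eqref{eq:PHIrecursion3} for $\phi^{-1}$, which exhibits invertibility by showing that all correction terms $X_\pi$ with $\pi>\hat 0_n$ involve strictly fewer blocks. In the classical $r$-matrix case the paper goes further and identifies $\phi$ with the closed-form map $F=m_{\mathfrak g}\circ(\operatorname{id}\otimes S_{\mathfrak g})\circ(R_+\otimes R_-)\circ\Delta_{\mathfrak g_R}$ of Proposition \ref{pro:lineariso}. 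So your route buys a clean, self-contained abstract proof (universal property plus a symbol computation), while the paper's presentation buys explicit formulas for $\phi$ and $\phi^{-1}$ that are needed later, e.g.\ in the construction of the post-Lie Magnus expansion in Theorem \ref{thm:FinverseChi}. The two are complementary; nothing in your argument is missing or incorrect.
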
 

\begin{rmk}\label{rmk:linkSTSR}
In Section \ref{sect:anotherHA} we will show that when the post-Lie algebra structure is defined by a solution of the modified classical Yang--Baxter equation, the isomophism $\phi$ in Theorem \ref{thm:KLM} can be explicitly described in terms of the Hopf algebra structures on $\mathcal U(\bar{\mathfrak g})$ and $\mathcal U_*(\mathfrak g)$.
\end{rmk}

\smallskip

Before further elaborating on the last remark in the context of reference \cite{RSTS} in the next section, we will show a central property of group-like elements in the completed universal enveloping algebra $\mathcal U(\mathfrak g)$ of the post-Lie algebra~${\mathfrak g}$ and, at the same time, we will give a more explicit (combinatorial) expression for the isomorphism $\phi$.

\medskip

In what follows we use $m_{\cdot}: \mathcal U(\overline{\mathfrak g}) \otimes \mathcal U(\overline{\mathfrak g}) \to \mathcal U(\overline{\mathfrak g})$ to denote the product in $\mathcal U(\overline{\mathfrak g})$, i.e., $m_\cdot(A \otimes B)=A . B$ for any $A,B \in \mathcal U(\overline{\mathfrak g})$. The Hopf algebra isomorphism $\phi: \mathcal U(\overline{\mathfrak g}) \to \mathcal U_*(\mathfrak g)$ in Theorem \ref{thm:KLM} can be described as follows. From the proof of Theorem \ref{thm:KLM} it follows that $\phi$ restricts to the identity on $\mathfrak g \hookrightarrow \mathcal U(\mathfrak g)$. Moreover, for $x_1,x_2,x_3 \in \mathfrak g$ we find
$$
	\phi(x_1 .\, x_2) = \phi(x_1) * \phi(x_2) = x_1 * x_2 =x_1x_2 + x_1 \triangleright  x_2,
$$
and 
\allowdisplaybreaks{
\begin{align}
	\phi(x_1 .\, x_2 .\, x_3) &= x_1 * x_2 * x_3 \\	
	&= x_1(x_2 * x_3) + x_1 \triangleright (x_2 * x_3) \label{recursion}\\
	&=x_1x_2x_3 + x_1(x_2 \triangleright x_3) + x_2(x_1 \triangleright x_3) 
			+ (x_1 \triangleright  x_2)x_3 + x_1 \triangleright(x_2 \triangleright x_3). 
\end{align}}
Equality \eqref{recursion} can be generalized to the following simple recursion for words in $\mathcal U(\overline{\mathfrak g})$ with $n$ letters
\begin{equation}
\label{eq:PHIrecursion1}
	\phi(x_1 .\, x_2 .\, \cdots .\, x_n) = x_1\phi(x_2 .\,  \cdots .\,  x_n)  
								+ x_1 \triangleright \phi(x_2 .\,  \cdots .\, x_n) .
\end{equation}
Recall that $x \triangleright \un=0$ for $x \in \mathfrak g$, and $\phi(\un)=\un$. From the fact that the post-Lie product on $\mathfrak g$ defines a linear map $d:\mathfrak g \rightarrow \operatorname{Der}\big(\mathcal U(\mathfrak g)\big),$ we deduce that the number of terms on the righthand side of the recursion \eqref{eq:PHIrecursion1} is given with respect to the length $n=1,2,3,4,5,6$ of the word $x_1 .\, \cdots .\, x_n \in \mathcal U_*(\mathfrak g)$ by 1, 2, 5, 15, 52, 203, respectively. These are the Bell numbers $B_i$, for $i=1,\ldots,6$, and for general $n$, these numbers satisfy the recursion $B_{n+1} = \sum_{i=0}^n {n \choose i} B_i$. Bell numbers count the different ways the set $[n]$ can be partition into disjoint subsets.

From this we deduce the general formula for $x_1  .\,  \cdots  .\, x_n \in \mathcal U(\overline {\mathfrak g})$
\begin{equation}
	\phi(x_1  .\,  \cdots  .\,  x_n) = x_1 * \cdots * x_n = \sum_{\pi \in P_n} X_\pi \in  \mathcal U( {\mathfrak g})    \label{eq:PHIrecursion2},
\end{equation}
where $P_n$ is the lattice of set partitions of the set $[n]=\{1,\dots,n\}$, which has a partial order of refinement ($\pi \leq \kappa$ if $\pi$ is a finer set partition than $\kappa$). Remember that a partition $\pi$ of the (finite) set $[n]$ is a collection of (non-empty) subsets $\pi=\{\pi_1,\dots,\pi_b\}$ of $[n]$, called blocks, which are mutually disjoint, i.e., $\pi_i \cap \pi_j=\emptyset$ for all $i\neq j$, and whose union $\cup_{i=1}^b \pi_i =[n]$. We denote by $|\pi|:=b$ the number of blocks of the partition $\pi$, and $|\pi_i|$ is the number of elements in the block $\pi_i$. Given $p,q \in [n]$ we will write that $p \sim_{\pi} q$ if and only if they belong to same block. The partition $\hat{1}_n = \{\pi_1\}$ consists of a single block, i.e., $|\pi_1|=n$. It is the maximum element in $P_n$. The partition $\hat{0}_n=\{\pi_1,\dots,\pi_n\}$ has $n$ singleton blocks, and is the minimum partition in $P_n$. In the following we denote set-partitions pictorially. For instance, the five elements in $P_3$ are depicted as follows:
$$
	\begin{array}{c}
		\strich\strich\strich \\
		1\ 2\ 3
	\end{array}	
	\qquad\
	\begin{array}{c}
	\strich\; \n \\
	\phantom{n} 1\ 2\ 3
	\end{array}
	\qquad\;\;\; 
	\begin{array}{c}
	\n\hspace{0.3cm} \strich \\
	1\ 2\ 3\
	\end{array}
	\qquad\
	\begin{array}{c}
	\nin\\
	1\;\, 2\;\, 3
	\end{array}
	\qquad\;\;
	\begin{array}{c}
	\n\hspace{0.125cm} \n\\
	\phantom{t}1\;\, 2\;\, 3
	\end{array}
$$
The first represents the minimal element in $P_3$, i.e., the singleton partition $\{\{1\},\{2\},\{3\}\}$. The second, third and fourth diagram represent the partitions $\{\{1\},\{2,3\}\}$, $\{\{1,2\},\{3\}\}$, and $\{\{2\},\{1,3\}\}$, respectively. The last one is the maximal element in $P_3$, which consists of a single block $\{\{1,2,3\}\}$. At order 4 we list the examples
$$
	 \strich\, \n\hspace{0.12cm} \n 
	 \qquad\ 
	  \strich \nin 
	 \qquad\ 
	  \n\hspace{0.015cm} \nin
	 \qquad\ 
	  \nin \strich
$$
where the first and second diagram correspond to $\{\{1\},\{2,3,4\}\}$ and $\{\{1\},\{3\},\{2,4\}\}$, respectively. The third and fourth diagram  correspond to $\{\{3\},\{1,2,4\}\}$ and $\{\{2\},\{1,3\},\{4\}\}$, respectively.

Observe that the particular ordering of the blocks in the partitions of the above examples follows from translating the pictorial representation by  ``reading" it from right to left. More precisely, the ordering of the block of any partition $\pi = \{\pi_1, \ldots, \pi_l\}$ associated to the graphical representation, is such that $\max(\pi_i)>j$,  $\forall j \in \pi_m$, $m<i$. Moreover, the elements in each block $\pi_i=\{k_1^i,k_2^i ,\ldots ,k_s^i\}$ are in natural order, i.e., $k_1^i < k_2^i <\cdots  < k_s^i$.  Hence, in the following we assume that the blocks of any partition $\pi$ are in increasing order with respect to the maximal element in each block, and the elements in each block are in natural increasing order, too.

The element $X_\pi$ in \eqref{eq:PHIrecursion2} is defined as follows
\begin{equation}
	X_{\pi} := \prod_{\pi_i \in \pi} x(\pi_i), \label{eq:PHIrecursion2a} 
\end{equation}
where $x(\pi_i):= \ell^{\triangleright }_{x_{k_1^i}} \circ \ell^{\triangleright }_{x_{k_2^i}} \circ\cdots \circ  \ell^{\triangleright }_{x_{k_{l-1}^i}}(x_{k_l^i})$ for the block $\pi_i=\{k_1^i,k_2^i,\ldots ,k_l^i\}$ of the partition $\pi=\{\pi_1, \ldots, \pi_m\}$, and $\ell^{\triangleright}_{a}(b):= a \triangleright b$, for $a,b$ elements in the post-Lie algebra  $\mathfrak g \hookrightarrow \mathcal U(\mathfrak g)$. Recall that $k_l^i \in \pi_i$ is the maximal element in this block. For instance
$$
	X_{\scalebox{0.6}{\strich\strich\strich}}=x_1x_2x_3, 
	\quad
	X_{\scalebox{0.6}{\strich \n}}=x_1(x_2 \triangleright x_3),
	\quad
	X_{\scalebox{0.6}{\nin}}=x_2(x_1 \triangleright x_3),
$$
$$
	X_{\scalebox{0.6}{\n\hspace{0.05cm} \strich }}=(x_1 \triangleright x_2)x_3,
	\quad
	X_{\scalebox{0.6}{\n\hspace{0.00cm} \n}}=x_1 \triangleright(x_2 \triangleright x_3)
$$

\begin{rmk}
Defining $m_i:=\phi(x^{\cdot i})$ and $d_i := \ell^{\triangleright i-1}_x(x):=x \triangleright (\ell^{\triangleright i-2}_x(x))$, $ \ell^{\triangleright 0}=\mathrm{id}$, we find that \eqref{eq:PHIrecursion2} is the $i$-th-order non-commutative Bell polynomial, $m_i = {\mathrm{B}}^{nc}_i(d_1,\ldots,d_i)$. See \cite{ELM14,LMK2} for details. 
\end{rmk}

Next we state a recursion for the compositional inverse $\phi^{-1}(x_1 \cdots x_n)$ of the word $x_1 \cdots x_n \in \mathcal U(\mathfrak g)$. First, it is easy to see that $\phi^{-1}(x_1x_2)=x_1 .\, x_2 - x_1 \triangleright  x_2 \in \mathcal U(\overline{\mathfrak g})$. Indeed, since $\phi$ is linear and reduces to the identity on $\mathfrak g \hookrightarrow \mathcal U(\mathfrak g)$, we have
$$
	\phi(x_1 .\, x_2 - x_1 \triangleright  x_2)= x_1 * x_2 - x_1 \triangleright  x_2 = x_1x_2,
$$
and 
\allowdisplaybreaks{
\begin{align*}
	\phi^{-1}(x_1x_2x_3 ) = x_1 .\, x_2  .\, x_3 
		- \phi^{-1}(x_1(x_2 \triangleright x_3)) 
		- \phi^{-1}(x_2(x_1 \triangleright x_3)) 
		- \phi^{-1}((x_1 \triangleright  x_2)x_3) 
		- x_1 \triangleright(x_2 \triangleright x_3)
\end{align*}}
which is easy to verify. In general, we find a recursive formula for $\phi^{-1}(x_1 \cdots x_n) \in \mathcal U(\overline{\mathfrak g})$
\begin{equation}
\label{eq:PHIrecursion3}
	\phi^{-1}(x_1 \cdots x_n) = x_1  .\, \cdots  .\, x_n - \sum_{\hat{0}_n < \pi \in P_n} \phi^{-1}(X_\pi).
\end{equation}
This is well-defined since in the sum on the righthand side all partitions have less than $n$ blocks.

\medskip

Next we compare group-like elements in the completions of $\mathcal U(\mathfrak g)$ and $\mathcal U_*(\mathfrak g)$, which we denote by $\hat{\mathcal U}(\mathfrak g)$ respectively $\hat{\mathcal U}_*(\mathfrak g)$. 

Recall that if $(H,m,u,\Delta,\epsilon,S)$ is a Hopf algebra and $I=\operatorname{Ker}(\epsilon:H\rightarrow\mathbb K)$ the augmentation ideal, then on $\hat{\mathcal H}:=\varprojlim H/I^n$ can be defined the structure of \emph{complete Hopf algebra}. The elements of $\hat H$ are the Cauchy sequences $\{x_n\}_{n\geq 0}$ with respect to the topology generated by $\{V_n(x)=x+I^n\,\vert\,x\in H\}_{n\geq 0}$. In particular, in $\hat H$ one finds elements of the form $\operatorname{exp}(\xi):=\sum_{\geq 0}\frac{\xi^n}{n!}$, and one can prove that $x\in\hat H$ is \emph{primitive}, i.e., $\hat{\Delta}(x)=x\hat{\otimes} {\bf{1}}+{\bf{1}}\hat{\otimes} x$, if and only if $\operatorname{exp}(x)$ is \emph{group-like}, that is, $\hat{\Delta}(\operatorname{exp}(x))=\operatorname{exp}(x)\hat{\otimes}\operatorname{exp}(x)$ \cite{Quillen}. Note that the set $\mathcal G(\hat H)$ of group-like elements forms a group with respect to the associative product of $\hat H$, and that for every $\xi \in \mathcal G(\hat H)$, $\xi^{-1}=\hat S(\xi)$. Moreover, note that the set of primitive elements $\mathcal P(\hat H)$ forms a Lie algebra whose Lie bracket is defined by anti-symmetrizing the associative product of $\hat H$. The map $\operatorname{exp}:\mathcal P(\hat H)\rightarrow\mathcal G(\hat H)$, $x\mapsto\operatorname{exp}(x)$, is a bijection of sets whose inverse defines the logarithm function. Let $H=\mathcal U(\mathfrak g)$, the universal enveloping algebra of $\mathfrak g$, and consider its completion $\hat{\mathcal U}(\mathfrak g)$. Since $\mathfrak g=\mathcal P(\hat{\mathcal U}(\mathfrak g))$, one deduces the existence of a bijection between $\mathfrak g$ and the group $\mathcal G(\hat{\mathcal U}(\mathfrak g))$, which to every primitive element $x \in \mathfrak g$ associates the corresponding unique group-like element $\operatorname{exp}(x)$. Note that in the process of completing ${\mathcal U}(\mathfrak g)$, the Lie algebra $\mathfrak g$ is completed as well \cite{Quillen}.

Observe that $\phi$ maps the augmentation ideal of $\mathcal U(\overline{\mathfrak g})$ to the augmentation ideal of $\mathcal U(\mathfrak g)$. Therefore, it extends to an isomorphism $\hat\phi : \hat{\mathcal U}(\overline{\mathfrak g}) \to \hat{\mathcal U}(\mathfrak g)$ of complete Hopf algebras.

We are interested in the inverse of the group-like element $\exp(x) \in \mathcal G(\hat{\mathcal U}(\mathfrak g))$ with respect to $\hat{\phi}$. It follows from the inverse of the word $x^n \in \hat{\mathcal U}(\mathfrak g)$, i.e., $\hat{\phi}^{-1}(\exp(x))=\sum_{n \ge 0} \frac{1}{n!} \hat\phi^{-1}(x^n)$.  The central result is the following 

\begin{thm}\label{thm:FinverseChi}
For each $x \in \mathfrak g$, there exists an unique element $\chi(x) \in \mathfrak g$, such that 
\begin{equation}
	\exp(x) = \exp^*(\chi(x)). \label{group-like}
\end{equation}
\end{thm}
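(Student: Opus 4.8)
The plan is to exploit the Hopf algebra isomorphism $\hat\phi:\hat{\mathcal U}(\overline{\mathfrak g})\to\hat{\mathcal U}_*(\mathfrak g)$ together with the classical bijection between primitive and group-like elements. First I would recall that $\exp(x)$ is group-like in $\hat{\mathcal U}(\mathfrak g)$ precisely because $x$ is primitive there. The key observation is that $\hat\phi^{-1}$ sends group-like elements of $\hat{\mathcal U}_*(\mathfrak g)$ to group-like elements of $\hat{\mathcal U}(\overline{\mathfrak g})$, since $\hat\phi$ is an isomorphism of complete Hopf algebras and in particular intertwines the (identical) coproducts. However, one has to be careful: $\exp(x)$ lives in $\hat{\mathcal U}(\mathfrak g)$, whereas the isomorphism is stated between $\hat{\mathcal U}(\overline{\mathfrak g})$ and $\hat{\mathcal U}_*(\mathfrak g)$. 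Since $\mathcal U_*(\mathfrak g)$ and $\mathcal U(\mathfrak g)$ share the same underlying coalgebra (same unit, counit, coproduct), an element of the completed space is group-like for the $\ast$-structure if and only if it is group-like for the usual structure; hence $\exp(x)$, being group-like in $\hat{\mathcal U}(\mathfrak g)$, is also a group-like element of $\hat{\mathcal U}_*(\mathfrak g)$.

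Next I would transport $\exp(x)$ through $\hat\phi^{-1}$: set $g:=\hat\phi^{-1}(\exp(x))\in\mathcal G(\hat{\mathcal U}(\overline{\mathfrak g}))$. Because $\mathfrak g=\mathcal P(\hat{\mathcal U}(\overline{\mathfrak g}))$ is exactly the Lie algebra of primitive elements of $\overline{\mathfrak g}$'s completed enveloping algebra (here using $\overline{\mathfrak g}=(V,\llbracket\cdot,\cdot\rrbracket)$ and that $V$ is the common underlying space), the exponential map $\exp_\cdot:\mathcal P(\hat{\mathcal U}(\overline{\mathfrak g}))\to\mathcal G(\hat{\mathcal U}(\overline{\mathfrak g}))$ is a bijection. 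Therefore there is a unique $\chi(x)\in V=\mathfrak g$ with $g=\exp_\cdot(\chi(x))$, i.e. $\hat\phi^{-1}(\exp(x))=\exp_\cdot(\chi(x))$ in $\hat{\mathcal U}(\overline{\mathfrak g})$. Applying $\hat\phi$ and using that $\hat\phi$ is an algebra morphism (so it sends $\exp_\cdot(\chi(x))$ to $\exp_\ast$ of $\hat\phi(\chi(x))$) together with the fact that $\hat\phi$ restricts to the identity on $\mathfrak g$ (from Theorem~\ref{thm:KLM} and the explicit formula $\phi|_{\mathfrak g}=\operatorname{id}$), one concludes $\exp(x)=\hat\phi(\exp_\cdot(\chi(x)))=\exp_\ast(\chi(x))$, which is \eqref{group-like}.

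Finally I would address uniqueness directly: if $\exp^*(y)=\exp^*(y')$ with $y,y'\in\mathfrak g$, then since $\exp_\ast:\mathcal P(\hat{\mathcal U}_*(\mathfrak g))\to\mathcal G(\hat{\mathcal U}_*(\mathfrak g))$ is a bijection (with logarithm as inverse) and $\mathcal P(\hat{\mathcal U}_*(\mathfrak g))=\mathcal P(\hat{\mathcal U}(\overline{\mathfrak g}))=V$ because the two Hopf algebras are isomorphic and $\mathfrak g$ carries the primitive elements, it follows that $y=y'$; concretely $\chi(x)=\log_\ast(\exp(x))$, the $\ast$-logarithm, which is well-defined on group-like elements. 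The main obstacle I anticipate is the bookkeeping around \emph{which} coalgebra structure and \emph{which} completion each element is being viewed in — one must repeatedly use that $m_\ast$ and $m_\cdot$ differ while the coproduct is shared, and that $\hat\phi$ intertwines everything. Once that identification is set up cleanly, the argument is a short diagram chase; no hard analytic or combinatorial input is needed, and the explicit combinatorial recursions \eqref{eq:PHIrecursion2}--\eqref{eq:PHIrecursion3} for $\phi$ and $\phi^{-1}$ can be used, if desired, to give $\chi(x)$ term by term.
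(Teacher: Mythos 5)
Your proof is correct and follows essentially the same route as the paper: both arguments transport the group-like element $\exp(x)$ through the complete Hopf algebra isomorphism $\hat\phi$ (using that the coproduct is shared between $\hat{\mathcal U}(\mathfrak g)$ and $\hat{\mathcal U}_*(\mathfrak g)$) and then invoke the bijection between primitive and group-like elements in $\hat{\mathcal U}(\overline{\mathfrak g})$ to conclude that $\chi(x)$ lies in (the completion of) $\mathfrak g$. The only difference is that the paper additionally constructs $\chi(x)$ explicitly by an order-by-order recursion in a dummy parameter before running this coalgebraic argument, whereas you obtain it abstractly as the $\ast$-logarithm; this is a streamlining of the same method rather than a different one.
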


\begin{proof}
For $x \in \mathfrak g$ the exponential $\exp(x)$ is a group-like element in $\mathcal G(\hat{\mathcal U}(\mathfrak g))$. The proof of Theorem \ref{thm:FinverseChi} involves calculating the inverse of the group-like element $\exp(x) \in \mathcal G(\hat{\mathcal U}(\mathfrak g))$ with respect to the map $\hat{\phi}$. Indeed, we would like to show that $\hat{\phi}^{-1}(\exp(x)) = \exp^\cdot(\chi(x)) \in \mathcal G(\hat{\mathcal U}(\bar{\mathfrak g}))$, from which identity \eqref{group-like} follows
$$
	\hat{\phi}\circ\hat{\phi}^{-1}(\exp(x)) = \exp(x) = \hat{\phi}\circ \exp^\cdot(\chi(x)) = \exp^*(\chi(x)),
$$
due to $\hat\phi$ being an algebra morphism from $\hat{\mathcal U}(\overline{\mathfrak g})$ to $\hat{\mathcal U}_*(\mathfrak g)$, which reduces to the identity on~${\mathfrak g}$.

First we show that for $x\in \mathfrak g$, the element $\chi(x)$ is defined inductively. For this we consider the expansion $\chi(xt):=xt + \sum_{m>0} \chi_m(x)t^m$ in the dummy parameter $t$. Comparing $\exp^*(\chi(xt))$ order by order with $\exp(xt)$ yields at second order in $t$
$$
	 \chi_2(x) := \frac{1}{2}x_1x_2 - \frac{1}{2}x_1 * x_2=- \frac{1}{2}x \triangleright x \in \mathfrak g. 
$$
At third order we deduce from \eqref{group-like} that 
\allowdisplaybreaks{
\begin{align*}
	\lefteqn{\chi_3(x) := -\frac{1}{3!} \sum_{\hat{0}_3 < \pi \in P_3} X_\pi   - \frac{1}{2}  \chi_2(x) * x - \frac{1}{2}  x * \chi_2(x)} \\
		      &= -\frac{1}{3!} \sum_{\hat{0}_3 < \pi \in P_3} X_\pi  
		      			+ \frac{1}{4}  \big((x \triangleright x) x + (x \triangleright x) \triangleright x\big)
					+ \frac{1}{4}  \big(x (x \triangleright x) + x \triangleright (x \triangleright x)\big)\\
		      &= -\frac{1}{3!}\big( 2x(x \triangleright x) + (x \triangleright  x)x + x \triangleright (x \triangleright x)\big)
		      			+ \frac{1}{4}  \big((x \triangleright x) x + (x \triangleright x) \triangleright x\big)
					+ \frac{1}{4}  \big(x (x \triangleright x) + x \triangleright (x \triangleright x)\big)\\
		     &= \frac{1}{12} [(x \triangleright x), x] 
		     			+ \frac{1}{4} (x \triangleright x) \triangleright x 
		     				+  \frac{1}{12} x \triangleright (x \triangleright x) \in \mathfrak g\\
		     &=  \frac{1}{6} [\chi_1(x), \chi_2(x)] 
		     			- \frac{1}{2} \chi_2(x)\triangleright x 
		     				-  \frac{1}{6} x \triangleright \chi_2(x),		 
\end{align*}}
where we defined $\chi_1(x):=x$. The $n$-th order term is given by
\allowdisplaybreaks{
\begin{align}
\label{eq:nth-order}
	\chi_n(x) &:= -\frac{1}{n!} \sum_{\hat{0}_n < \pi \in P_n} X_\pi  
		- \sum_{k=2}^{n-1} \frac{1}{k!} \sum_{p_1 + \cdots + p_k = n \atop p_i > 0}  \chi_{p_1}(x) *  \chi_{p_2}(x) * \cdots *  \chi_{p_k}(x)\\
		      &= \frac{1}{n!} x^n -\frac{1}{n!} x^{*n} 
		      - \sum_{k=2}^{n-1} \frac{1}{k!} \sum_{p_1 + \cdots + p_k = n \atop p_i > 0}  \chi_{p_1}(x) *  \chi_{p_2}(x) * \cdots *  \chi_{p_k}(x).\end{align}}
From this we derive an inductive description of the terms $\chi_n(x) \in \hat{\mathcal U}_*({\mathfrak g})$ depending on the $\chi_p(x)$ for $1 \le p \le n-1$
\begin{equation}
	\chi_n(x) := \frac{1}{n!} x^n 
		      - \sum_{k=2}^{n} \frac{1}{k!} \sum_{p_1 + \cdots + p_k = n \atop p_i > 0}  \chi_{p_1}(x) *  \chi_{p_2}(x) * \cdots *  \chi_{p_k}(x).
		      \label{chi-map}
\end{equation} 

We have verified directly that the first three terms, $\chi_i(x)$ for $i=1,2,3$, in the expansoin $\chi(xt):=xt + \sum_{m>0} \chi_m(x)t^m$ are in $ \mathfrak g$. Showing that $\chi_n(x) \in \mathfrak g$ for $n>3$ by induction using formula \eqref{chi-map} is surely feasible. However, we follow another strategy. At this stage \eqref{chi-map} implies that $\chi(x) \in \hat{\mathcal U}_*({\mathfrak g})$ exists. Since $x \in \mathfrak g$, we have that $\exp(x)$ is group-like, i.e., $\hat{\Delta} (\exp(x)) = \exp(x) \hat\otimes \exp(x)$. Recall that $\hat{\mathcal U}_*({\mathfrak g})$ is a complete Hopf algebra with the same coproduct $\hat{\Delta}$. Hence
$$
	\hat{\Delta}(\exp^*(\chi(x)))
			= \hat{\Delta} (\exp(x))
			= \exp(x) \hat\otimes \exp(x) 
			= \exp^*(\chi(x)) \hat\otimes \exp^*(\chi(x)).
$$
Using $\hat\phi$ we can write $\hat\phi \hat\otimes \hat\phi \circ \hat{\Delta}_{\overline{\mathfrak g}}(\exp^\cdot(\chi(x))) = \hat\phi \hat\otimes \hat\phi \circ (\exp^\cdot(\chi(x)) \hat\otimes \exp^\cdot(\chi(x))),$ which implies that $\exp^\cdot(\chi(x))$ is a group-like element in $\hat{\mathcal U}(\overline{\mathfrak g})$
$$
	 \hat{\Delta}_{\overline{\mathfrak g}}(\exp^\cdot(\chi(x))) = \exp^\cdot(\chi(x)) \hat\otimes \exp^\cdot(\chi(x)).
$$ 
Since $\hat{\mathcal U}(\overline{\mathfrak g})$ is a complete filtered Hopf algebra, the relation between group-like and primitive elements is one-to-one \cite{Quillen}. This implies that $\chi(x) \in \overline{\mathfrak g} \simeq {\mathfrak g}$, which proves equality \eqref{group-like}. Note that $\chi(x)$ actually is an element of the completion of the Lie algebra $\mathfrak g$. However, the latter is part of $\hat{\mathcal U}({\mathfrak g})$. 
\end{proof}

\begin{cor}\label{cor:diffeqChi}
Let $x \in {\mathfrak g}$. The following differential equation holds for $\chi(xt) \in \mathfrak g[[t]]$
\begin{equation}
\label{proof-key2}
	\dot \chi(xt) =  {\rm dexp}^{*-1}_{-\chi(xt)}\Big( \exp^*\big(-\chi(xt)\big) \triangleright   x\Big).
\end{equation}
The solution $\chi(xt)$ is called post-Lie Magnus expansion. 
\end{cor}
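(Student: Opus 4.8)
The plan is to differentiate the identity \eqref{group-like} — with $x$ replaced by $xt$ — with respect to the parameter $t$, and to compare the two resulting expressions after passing between the associative products $\cdot$ on $\hat{\mathcal U}(\mathfrak g)$ and $*$ on $\hat{\mathcal U}_*(\mathfrak g)$. Set $\Gamma(t):=\exp(xt)=\exp^*(\chi(xt))$. By Theorem \ref{thm:FinverseChi} this is a group-like element of the completed Hopf algebra (whose coproduct is common to $\hat{\mathcal U}(\mathfrak g)$ and $\hat{\mathcal U}_*(\mathfrak g)$ by Theorem \ref{thm:KLM0}), and $\chi(xt)\in\mathfrak g[[t]]$ is primitive; consequently $\exp^*(-\chi(xt))$ is the two-sided $*$-inverse of $\Gamma(t)$. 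On the $(\hat{\mathcal U}(\mathfrak g),\cdot)$ side the computation is immediate: since $x$ commutes with $\exp(xt)$, one has $\dot\Gamma=\Gamma\cdot x$.

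The first genuine step is to re-express $\Gamma\cdot x$ through the $*$-product. Because $\Gamma$ is group-like, \eqref{eq:postLieU} gives $\Gamma*C=\Gamma\,(\Gamma\triangleright C)$ for every $C$. I would then invoke Proposition \ref{prop:post2} in the form $A\triangleright(B\triangleright C)=(A*B)\triangleright C$ to show that $\Gamma\triangleright(-)$ and $\exp^*(-\chi(xt))\triangleright(-)$ are mutually inverse: indeed $\Gamma\triangleright\bigl(\exp^*(-\chi(xt))\triangleright x\bigr)=(\Gamma*\exp^*(-\chi(xt)))\triangleright x=\un\triangleright x=x$, using \eqref{eq:pha1}. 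Hence
\[
	\dot\Gamma=\Gamma\cdot x=\Gamma*\bigl(\exp^*(-\chi(xt))\triangleright x\bigr)=\exp^*(\chi(xt))*\bigl(\exp^*(-\chi(xt))\triangleright x\bigr).
\]
Along the way I would record the elementary fact — proved by induction on word length from \eqref{eq:pha1}--\eqref{eq:pha2} — that $A\triangleright y$ lies in (the completion of) $\mathfrak g$ whenever $A\in\hat{\mathcal U}(\mathfrak g)$ and $y\in\mathfrak g$, so that everything remains in $\mathfrak g[[t]]$ order by order in $t$.

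For the $(\hat{\mathcal U}_*(\mathfrak g),*)$ side I would apply the standard derivative-of-the-exponential identity, valid in any complete filtered associative algebra: with $\mathrm{ad}^*_Y(Z):=Y*Z-Z*Y$ and $\mathrm{dexp}^*_Y:=\sum_{n\ge0}(\mathrm{ad}^*_Y)^n/(n+1)!$ one has $\dot\Gamma=\exp^*(\chi(xt))*\bigl(\mathrm{dexp}^*_{-\chi(xt)}\dot\chi(xt)\bigr)$. Since the $*$-commutator of two elements of $\mathfrak g$ is exactly the bracket $\llbracket\cdot,\cdot\rrbracket$ of \eqref{postLie3}, the operator $\mathrm{dexp}^*_{-\chi(xt)}$ preserves $\mathfrak g[[t]]$, and as it has leading term $\mathrm{id}$ it is invertible there. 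Equating the two expressions for $\dot\Gamma$ and cancelling the invertible left $*$-factor $\exp^*(\chi(xt))$ gives $\mathrm{dexp}^*_{-\chi(xt)}\dot\chi(xt)=\exp^*(-\chi(xt))\triangleright x$; applying $\mathrm{dexp}^{*-1}_{-\chi(xt)}$ then produces \eqref{proof-key2} exactly.

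I expect the only delicate point to be the middle step — the passage from the ordinary product to the $*$-product, i.e.\ identifying the unique $C$ with $\Gamma\triangleright C=x$ as $C=\exp^*(-\chi(xt))\triangleright x$ via the ``associativity'' identity \eqref{last} together with the group-likeness of $\Gamma$ — and the attendant need to check that the $\mathrm{dexp}^*$-formula, the cancellation, and the inversion of $\mathrm{dexp}^*$ are all legitimate in the relevant completion and in $\mathfrak g[[t]]$. I would also be careful to fix, once and for all, the sign and left/right conventions in the $\mathrm{dexp}$ identity so that the operator that appears is $\mathrm{dexp}^*_{-\chi(xt)}$ rather than $\mathrm{dexp}^*_{\chi(xt)}$; this is precisely why $-\chi(xt)$, and not $\chi(xt)$, occurs in \eqref{proof-key2}.
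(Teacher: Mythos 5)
Your proposal is correct and follows essentially the same route as the paper: differentiate $\exp(xt)=\exp^*(\chi(xt))$, apply the $\mathrm{dexp}^*$ identity on the $*$-side, rewrite $\exp(xt)\,x$ in terms of the $*$-product using the group-likeness of the exponential together with the lifted post-Lie identities, and then invert $\mathrm{dexp}^*_{-\chi(xt)}$. The only (cosmetic) difference is that you pass through \eqref{last} to see that $\Gamma\triangleright(-)$ and $\exp^*(-\chi(xt))\triangleright(-)$ are mutually inverse, whereas the paper expands $\exp^*(-\chi(xt))\triangleright(\exp(xt)x)$ directly via \eqref{eq:postLieU} and \eqref{eq:pha2}.
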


\begin{proof} Recall the general fact for the $\rm{dexp}$-operator \cite{Blanes}
$$
	\exp^*({-\beta(t)}) \ast \frac{d }{dt}\exp^*({\beta(t)}) = \exp^*({-\beta(t)}) \ast {\rm{dexp}}^\ast _{\beta}(\dot{\beta}) *\exp^*({\beta(t)}) 
	={\rm{dexp}}^\ast _{-\beta}(\dot{\beta}),
$$
where 
$$
	{\rm{dexp}}^\ast _{\beta}(x):= \sum_{n \ge 0} \frac{1}{(n+1)!}ad^{(\ast n)}_\beta(x)
	\qquad {\rm{and}} \qquad
	{\rm dexp}^{\ast  -1}_{\beta}(x):=\sum_{n \ge 0} \frac{b_n}{n!} ad^{(\ast n)}_\beta(x).
$$
Here $b_n$ are the Bernoulli numbers and $ad^{(\ast k)}_a(b):=[a,ad^{(\ast k-1)}_a(b)]_\ast$. This together with the differential equation $\frac{d}{dt}\exp^*(\chi(xt)) = \exp(xt)x$ deduced from \eqref{group-like}, implies
\allowdisplaybreaks{ 
\begin{eqnarray}
	 {\rm{dexp}}^{*}_{-\chi(xt)}\big(\dot \chi(xt)\big) 
	  			 &=& \exp^*\big(-\chi(xt)\big)* (\exp(xt)x) \nonumber\\ 
	 			 &=&  \exp^*\big(-\chi(xt)\big)
				 	\Big(\exp^*\big(-\chi(xt)\big) \triangleright   (\exp(xt)x)\Big) \label{step1}\\					 								&=&	 \exp^*\big(-\chi(xt)\big)
					 \bigg(
						\big(\exp^*\big(-\chi(xt)\big) \triangleright   \exp(xt)\big) 
						 \big(\exp^*\big(-\chi(xt)\big) \triangleright   x\big)
					 \bigg) \label{step2}\\ 
				 &=&	 \exp^*\big(-\chi(xt)\big)
					\bigg(
						 \big(\exp^*\big(-\chi(xt)\big) \triangleright   \exp^*\big(\chi(xt)\big)\big) 
				 		\big(\exp^*\big(-\chi(xt)\big) \triangleright   x\big)
					\bigg) \label{step3}\\ 
				  &=&
				  	\bigg( \exp^*\big(-\chi(xt)\big)
				 		\Big(\exp^*\big(-\chi (xt)\big) \triangleright 
						\exp^*\big(\chi(t a)\big)\Big)  
					\bigg)
						 \big(\exp^*\big(-\chi (xt)\big) \triangleright x\big)
									 \label{step4}\\ 
				 &=&
			 		 \Big( \exp^*\big(-\chi(xt)\big) * 
					 \exp^*\big(\chi(xt)\big) \Big)
				 	\big(\exp^*\big(-\chi(xt)\big) \triangleright   x\big) 	\nonumber\\ 
				 &=& \exp^*\big(-\chi(xt)\big) \triangleright x. 			\nonumber
\end{eqnarray}} 
The claim in \eqref{proof-key2} follows after inverting $ {\rm{dexp}}^{*}_{-\chi(xt)}\big(\dot \chi(xt)\big) $. Note that we used successively \eqref{eq:postLieU}, \eqref{eq:pha2} and \eqref{group-like}
\end{proof}

\begin{rmk} Note that any post-Lie algebra with an abelian Lie bracket becomes to a pre-Lie algebra, and the universal enveloping algebra ${\mathcal U}(\mathfrak g)$ reduces to the symmetric algebra ${\mathcal S}(\mathfrak g)$. This is the setting of \cite{OudomGuin}, and identity \eqref{group-like} was described in the pre-Lie algebra context in \cite{ChapPat}. In this case the post-Lie Magnus expansion $\chi(x)$ restricts to the simpler pre-Lie Magnus expansion \cite{EM,Manchon}     
\end{rmk}

In the next section we further explore the universal enveloping algebra corresponding to a post-Lie algebra defined in terms of a classical $r$-matrix, by looking at group-like elements in the completed universal enveloping algebra $\hat{\mathcal U}(\mathfrak g)$.


\section{An isomorphism theorem}
\label{sect:anotherHA}

In this section we will show that, after specializing to the case of post-Lie algebras defined by a solution of the MCYBE, one can get an explicit formula for the isomorphism map of Theorem  \ref{thm:KLM}.
Let  $\mathcal U(\mathfrak g)$ and $\mathcal U(\mathfrak g_R)$ be the universal enveloping algebras of $\mathfrak g$ respectively $\mathfrak g_R$. Since $R_\pm : \mathfrak g_R \to \mathfrak g$ are Lie algebra morphisms, $R_\pm[x,y]_R = [R_\pm x,R_\pm y]$, the universal property permits to extend both maps to unital algebra morphisms from $\mathcal U(\mathfrak g_R)$ to $\mathcal U(\mathfrak g)$. We shall use the same notation for the latter, that is, $R_\pm: \mathcal U(\mathfrak g_R) \rightarrow \mathcal U(\mathfrak g)$. Their images are $\mathcal U(\mathfrak g_\pm)$, i.e., the universal enveloping algebras of the Lie sub-algebras of $\mathfrak g_\pm \subset \mathfrak g$.

\begin{prop}\label{pro:lineariso}
The map $F:\mathcal U(\mathfrak g_R)\rightarrow\mathcal U(\mathfrak g)$ defined by:
\begin{equation}
\label{eq:sigma}
	F=m_{\mathfrak g}\circ (\operatorname{id}\otimes S_{\mathfrak g})\circ (R_+\otimes R_-)\circ\Delta_{\mathfrak g_R},
\end{equation}
is a linear isomorphism. Its restriction to $\mathfrak g_R \hookrightarrow \mathcal U(\mathfrak g_R)$ is the identity map. 
\end{prop}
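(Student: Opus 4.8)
The plan is to show that $F$ is invertible by exhibiting its inverse explicitly, using the Hopf-algebraic structure of $\mathcal U(\mathfrak g_R)$ together with the fact that $R_+ - R_- = \operatorname{id}$ (which follows from \eqref{eq:rpm}). The key observation is that $F$ has exactly the shape of a convolution-type product: writing $\Delta_{\mathfrak g_R}(A) = A_{(1)} \otimes A_{(2)}$, we have $F(A) = R_+(A_{(1)})\, S_{\mathfrak g}(R_-(A_{(2)}))$, a product in $\mathcal U(\mathfrak g)$ of the algebra morphism $R_+$ against the (anti)morphism $S_{\mathfrak g}\circ R_-$. So the natural strategy is first to check that $F$ restricts to the identity on $\mathfrak g_R$ — for $x \in \mathfrak g_R$ primitive, $F(x) = R_+(x)\cdot S_{\mathfrak g}(\un) + R_+(\un)\cdot S_{\mathfrak g}(R_-(x)) = R_+ x - R_- x = (R_+ - R_-)x = x$ — and then to promote this to a statement about the associated graded.

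\smallskip

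First I would set up the filtration argument. Both $\mathcal U(\mathfrak g_R)$ and $\mathcal U(\mathfrak g)$ carry the standard ascending filtration by degree (length of words), and $R_\pm$, being unital algebra morphisms sending generators to generators, are filtered maps; the coproduct $\Delta_{\mathfrak g_R}$ and the antipode $S_{\mathfrak g}$ are filtered as well, so $F$ is a filtered linear map. Next I would compute the induced map on the associated graded spaces. Since $\mathcal U(\mathfrak g_R)$ is cocommutative and filtered, modulo lower-degree terms one has $\Delta_{\mathfrak g_R}(x_1\cdots x_n) \equiv \sum_{S \subseteq [n]} x_S \otimes x_{S^c}$ (the "primitive part" of the coproduct), and $S_{\mathfrak g}(x_{i_1}\cdots x_{i_k}) \equiv (-1)^k x_{i_k}\cdots x_{i_1}$. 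Feeding this into the formula for $F$ and collecting terms, every contribution with a nonempty "$R_-$-block" is forced into lower filtration degree after using $R_+ - R_- = \operatorname{id}$, so the leading term of $F(x_1\cdots x_n)$ is exactly $x_1\cdots x_n$. In other words, $\operatorname{gr}(F)$ is the identity on $\operatorname{gr}\mathcal U(\mathfrak g_R) = \mathcal S(V) = \operatorname{gr}\mathcal U(\mathfrak g)$ (here using that $\mathfrak g_R$ and $\mathfrak g$ share the underlying vector space $V$, so the PBW-graded objects agree). A filtered linear map between filtered vector spaces whose associated graded is an isomorphism is itself an isomorphism; this yields bijectivity of $F$.

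\smallskip

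The main obstacle I anticipate is the bookkeeping in the associated-graded computation: one must argue carefully that the cross-terms in $R_+(A_{(1)})S_{\mathfrak g}(R_-(A_{(2)}))$ involving a genuine split of the word contribute only in lower filtration degree, which is where the hypothesis $R_+ = R_- + \operatorname{id}$ (not just that $R_\pm$ are morphisms) enters decisively. Concretely, one writes $R_+ = R_- + \operatorname{id}$ on generators and expands; the "$R_-$ on both sides" pieces telescope against the antipode into something of strictly smaller length, leaving only the top term. An alternative, cleaner route that avoids the graded argument altogether is to directly construct $F^{-1}$: one expects $F^{-1} = m_{\mathfrak g}\circ(R_+ \otimes (S_{\mathfrak g}\circ R_-))\circ\Delta_{\mathfrak g}$ read the other way, i.e., an analogous convolution formula with the roles of $\mathfrak g$ and $\mathfrak g_R$ interchanged, and then verify $F\circ F^{-1} = \operatorname{id}$ using the Hopf algebra axioms (coassociativity, the antipode relation $m\circ(\operatorname{id}\otimes S)\circ\Delta = \un\epsilon$) together with the compatibility of $R_\pm$ with the coproducts. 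I would pursue the graded argument as the primary proof since it is robust and short, and mention the explicit inverse as a remark. Finally, the claim that $F|_{\mathfrak g_R} = \operatorname{id}$ is the $n=1$ case already computed above, so it requires no separate work.
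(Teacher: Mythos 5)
Your proposal is correct and follows essentially the same route as the paper: both verify $F=\operatorname{id}$ on primitives, observe that $F$ is filtered, and establish bijectivity from the fact that $F(x_1.\,\cdots.\,x_n)\equiv x_1\cdots x_n$ modulo $\mathcal U_{n-1}(\mathfrak g)$, which is exactly the identity $\prod_i(R_+(x_i)-R_-(x_i))=x_1\cdots x_n$ at top filtration degree. Your packaging of this as ``$\operatorname{gr}(F)$ is the identity on $\mathcal S(V)$, hence $F$ is an isomorphism'' is a slightly cleaner formulation of the paper's separate injectivity and surjectivity inductions, but the underlying argument is the same.
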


\begin{proof}
Note that $m_{\mathfrak g}$ and $S_{\mathfrak g}$ denote product respectively antipode in $\mathcal U(\mathfrak g)$, whereas $\Delta_{\mathfrak g_R}$ denotes the coproduct in $\mathcal U(\mathfrak g_R)$. This slightly more cumbersome notation is applied in order to make the presentation more traceable. Given an element $x \in\mathfrak g_R \hookrightarrow \mathcal U(\mathfrak g_R)$, one has that
\allowdisplaybreaks{ 
\begin{align*}
	F(x) 	&=m_{\mathfrak g}\circ (\operatorname{id}\otimes S_{\mathfrak g})\circ (R_+\otimes R_-)\circ\Delta_{\mathfrak g_R}(x)\\
		&= m_{\mathfrak g}\circ (\operatorname{id}\otimes S_{\mathfrak g})\circ (R_+\otimes R_-)(x \otimes {\bf{1}} + {\bf{1}} \otimes x)\\
		&= m_{\mathfrak g}\circ (\operatorname{id}\otimes S_{\mathfrak g})(R_+(x)\otimes {\bf{1}} + {\bf{1}} \otimes R_-(x))\\
		&= m_{\mathfrak g}(R_+(x)\otimes  {\bf{1}} - {\bf{1}} \otimes R_-(x))\\
		&= R_+(x)-R_-(x) = x \in \mathfrak g,
\end{align*}}%
showing that $F$ restricts to the identity map between $\mathfrak g_R$ and $\mathfrak g$. We use the notation from the foregoing section by writing $m_{\mathfrak g_R}(x \otimes y)=x .\, y$. As in Lemma \ref{lem:coprodast} we have
$$
	\Delta_{\mathfrak g_R}(x_1 .\, \cdots .\, x_n) = x_1 .\, \cdots .\, x_n \otimes {\bf{1}} 
										+ {\bf{1}} \otimes x_1.\,  \cdots .\, x_n
	+ \sum_{k=1}^{n-1}\sum_{\sigma\in\Sigma_{k,n-k}} 
	x_{\sigma(1)}.\, \cdots .\, x_{\sigma(k)} \otimes x_{\sigma(k+1)}.\, \cdots .\, x_{\sigma (n)}.
$$
Since $R_\pm$ are homomorphisms of unital associative algebras, one can easily show that for every $x_{1}.\, \cdots .\, x_k\in\mathcal U_k(\mathfrak g_R)$:
 \begin{eqnarray*}
	F (x_1.\,  \cdots .\, x_k)&=&R_+(x_1)\cdots R_+(x_k) + (-1)^k R_-(x_k)\cdots R_-(x_1) + \\
				&&\sum_{l=1}^{k-1}\sum_{\sigma\in\Sigma_{l,k-l}}(-1)^{k-l}R_+(x_{\sigma(1)})\cdots R_+(x_{\sigma(l)})R_-(x_{\sigma(k)})\cdots R_-(x_{\sigma(l+1)}) \in \mathcal U_k(\mathfrak g),
\end{eqnarray*}
which proves that $F$ maps homogeneous elements to homogeneous elements. To verify injectivity of $F$ one can argue as follows. Since $x=R_+(x)-R_-(x)$ for $x \in \mathfrak g$, one can deduce from the previous formula for $x_1.\, \cdots .\, x_k \in \mathcal U_k(\mathfrak g_R)$ that
\[
	F(x_1 .\, \cdots .\, x_k) = x_1\cdots x_k \; \textbf{mod}\,\mathcal U_{k-1}(\mathfrak g),
\]
where $x_1\cdots x_k$ on the righthand side lies in $\mathcal U_k(\mathfrak g)$. For instance
$$
	F(x_1 .\, x_2) = R_+(x_1)R_+(x_2) + R_-(x_2) R_-(x_1) - R_+(x_1)R_-(x_2) - R_+(x_2) R_-(x_1). 
$$
Using $x+R_-(x)=R_+(x)$ implies in $\mathcal U(\mathfrak g)$ that 
\allowdisplaybreaks{
\begin{align*}
	F(x_1 .\, x_2) &= (x_1 + R_-(x_1))(x_2+R_-(x_2)) + R_-(x_2) R_-(x_1) \\
	& \qquad- (x_1 + R_-(x_1))R_-(x_2) - (x_2+R_-(x_2)) R_-(x_1)\\
	&= x_1x_2 + x_1R_-(x_2) + R_-(x_1)x_2 + R_-(x_1)R_-(x_2) \\
	&+ R_-(x_2) R_-(x_1) - x_1R_-(x_2)  - R_-(x_1)R_-(x_2) - x_2R_-(x_1) - R_-(x_2) R_-(x_1)\\
	& = x_1x_2 + [R_-(x_1),x_2], 
\end{align*}}%
where $x_1x_2 \in  \mathcal U_2(\mathfrak g)$ and $[R_-(x_1),x_2] \in \mathcal  U_1(\mathfrak g) \simeq \mathfrak g$. Then, if $F(x_1 .\, \cdots .\, x_k)=0$, one concludes that $x_1\cdots x_k \in \mathcal U_k(\mathfrak g)$ must be equal to zero, that is, at least one among the elements $x_i \in \mathfrak g$ composing the monomial $x_1\cdots x_k$ is equal to zero. This forces the element $x_1 .\, \cdots .\, x_k \in \mathcal U_k(\mathfrak g_R)$ to be equal to zero, which implies injectivity of $F$.

To prove that the map $F$ is surjective one can argue by induction on the length of the homogeneous elements of $\mathcal U(\mathfrak g)$. The first step of the induction is provided by the fact that $F$ restricted to $\mathfrak g_R$ becomes the identity map, and $ \mathfrak g \hookrightarrow \mathcal U_1(\mathfrak g)$. Suppose now that every element in $\mathcal U_{k-1}(\mathfrak g)$ is in the image of $F$ and observe that  $x_{1}\cdots x_{k}\in\mathcal U_k(\mathfrak g)$ can be written as
\allowdisplaybreaks{
\begin{align*}
	\lefteqn{x_1 \cdots x_k =\prod^k_{i=1}\big(R_+(x_i)-R_-(x_i)\big)}\\
		&= \big(R_+(x_1)-R_-(x_1)\big)\big(R_+(x_2)-R_-(x_2)\big)\prod^k_{i=3}\big(R_+(x_i)-R_-(x_i)\big)\\
		&= \big(R_+(x_1)R_+(x_2)-R_-(x_1)R_+(x_2) - R_+(x_1)R_-(x_2) + R_-(x_1)R_-(x_2)\big)\prod^k_{i=3}\big(R_+(x_i)-R_-(x_i)\big)\\
		&= \Big(R_+(x_{1})\cdots R_+(x_{k}) + (-1)^kR_-(x_{k})\cdots R_-(x_{1})\\
		&+ \sum_{l=1}^{k-1}\sum_{\sigma\in\Sigma_{l.k-l}}
		(-1)^{k-l}R_+(x_{\sigma(1)})\cdots R_+(x_{\sigma(l)})
		\cdot R_-(x_{\sigma(k)}) \cdots R_-({x_{\sigma(l+1)}}) \Big) \textbf{mod}\,\mathcal U_{k-1}(\mathfrak g),
\end{align*}}%
which proves the claim, since 
\allowdisplaybreaks{
\begin{align}
	F(x_1 .\, \cdots.\,  x_k)	
	&= R_+(x_{1}) \cdots R_+(x_{k}) + (-1)^k R_-(x_{k}) \cdots R_-(x_{1}) 	\label{eq:1}\\
	&+ \sum_{l=1}^{k-1} \sum_{\sigma\in\Sigma_{l.k-l}}(-1)^{k-l}
	R_+(x_{\sigma(1)})\cdots R_+(x_{\sigma(l)})\cdot R_-(x_{\sigma(k)}) \cdots R_-(x_{\sigma(l+1)}). \label{eq:2}
\end{align}}
\end{proof}

Using the previous computation and the definition of the $*$-product, one can easily see that $F(x_1 .\, x_2)=x_1x_2 + [R_-(x_1),x_2] =  x_1x_2 + x_1 \triangleright x_2$, where $\triangleright$ is defined in \eqref{def:RBpostLie} (and lifted to $\mathcal U(\mathfrak g) $). It implies that $F(x_1 .\, x_2) = x_1 * x_2 \in \mathcal U_*(\mathfrak g) $. Using a simple induction on the lenght of the monomials, this calculation extends to all of $\mathcal U(\mathfrak g_R)$, which is the content of the following 

\begin{cor}\cite{EFLIMK}\label{cor:isoal}
The map $F$ is an isomorphism of unital, filtered algebras, from $\mathcal U(\mathfrak g_R)$ to $\mathcal{U}_{*}(\mathfrak{g})$. In particular, $F(x_1 .\, \cdots .\, x_n) = x_1 * \cdots * x_n$ for all monomials $x_1 .\, \cdots .\, x_n\in\mathcal U(\mathfrak g_R)$.
\end{cor}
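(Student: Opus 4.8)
The plan is to establish the explicit formula $F(x_1 .\, \cdots .\, x_n) = x_1 * \cdots * x_n$ by induction on $n$ and to read multiplicativity of $F$ off it; bijectivity and compatibility with the filtrations are already furnished by Proposition \ref{pro:lineariso} (recall in particular $F(x_1 .\, \cdots .\, x_n) = x_1 \cdots x_n\ \textbf{mod}\ \mathcal U_{n-1}(\mathfrak g)$, so that $F$ sends $\mathcal U_n(\mathfrak g_R)$ bijectively onto $\mathcal U_n(\mathfrak g)$). Once the formula is known, for two monomials one has $(x_1 .\, \cdots .\, x_k) .\, (y_1 .\, \cdots .\, y_m) = x_1 .\, \cdots .\, x_k .\, y_1 .\, \cdots .\, y_m$ in $\mathcal U(\mathfrak g_R)$, whence $F$ of this equals $x_1 * \cdots * x_k * y_1 * \cdots * y_m$, which by associativity of the $*$-product is $F(x_1 .\, \cdots .\, x_k) * F(y_1 .\, \cdots .\, y_m)$; linearity then gives $F(a .\, b) = F(a) * F(b)$ in general, so $F\colon \mathcal U(\mathfrak g_R)\to\mathcal U_*(\mathfrak g)$ is an isomorphism of unital filtered algebras.

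The inductive step rests on the single identity $F(x .\, A) = x * F(A)$ for $x\in\mathfrak g_R$ and $A\in\mathcal U(\mathfrak g_R)$. To prove it, write $\Delta_{\mathfrak g_R}(A) = A_{(1)}\otimes A_{(2)}$, so that $F(A) = R_+(A_{(1)})\,S_{\mathfrak g}\!\big(R_-(A_{(2)})\big)$. Because $\Delta_{\mathfrak g_R}$ is an algebra morphism, $R_\pm$ are algebra morphisms, $S_{\mathfrak g}$ is an algebra anti-morphism, and $x$ is primitive in $\mathcal U(\mathfrak g_R)$ — hence $R_\pm(x)$ are primitive in $\mathcal U(\mathfrak g)$ and $S_{\mathfrak g}(R_-(x)) = -R_-(x)$ — expanding $\Delta_{\mathfrak g_R}(x .\, A) = (x .\, A_{(1)})\otimes A_{(2)} + A_{(1)}\otimes (x .\, A_{(2)})$ and pushing it through $F$ yields
\[
	F(x .\, A) = R_+(x)\,F(A) - F(A)\,R_-(x).
\]
Now $R_+(x) - R_-(x) = x$ (Proposition \ref{pro:lineariso}), so $R_+(x) = x + R_-(x)$ and therefore $F(x .\, A) = x\,F(A) + [R_-(x), F(A)]$. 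Finally, for $x\in\mathfrak g$ primitive the lifted post-Lie product $B\mapsto x\triangleright B$ is, by Proposition \ref{prop:post1}, a derivation of $\mathcal U(\mathfrak g)$ agreeing with $\operatorname{ad}_{R_-(x)}$ on the generators $\mathfrak g$ (use \eqref{def:RBpostLie}), hence on all of $\mathcal U(\mathfrak g)$; thus $[R_-(x), F(A)] = x\triangleright F(A)$ and
\[
	F(x .\, A) = x\,F(A) + x\triangleright F(A) = x_{(1)}\big(x_{(2)}\triangleright F(A)\big) = x * F(A),
\]
since $x$ is primitive. The induction then closes immediately: $F(\un)=\un$ and $F|_{\mathfrak g_R}=\operatorname{id}$ settle $n=0,1$, and $F(x_1 .\, \cdots .\, x_n) = x_1 * F(x_2 .\, \cdots .\, x_n) = x_1 * x_2 * \cdots * x_n$ by the induction hypothesis.

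The only genuinely computational point is the Sweedler-notation bookkeeping that collapses $m_{\mathfrak g}\circ(\operatorname{id}\otimes S_{\mathfrak g})\circ(R_+\otimes R_-)\circ\Delta_{\mathfrak g_R}(x .\, A)$ to $R_+(x)F(A) - F(A)R_-(x)$; everything else is formal. The one conceptual ingredient to keep in view is the identification of the lifted post-Lie product by a primitive element with the inner derivation $\operatorname{ad}_{R_-(x)}$ of $\mathcal U(\mathfrak g)$ — this is exactly what reconciles the convolution-type formula defining $F$ with the $*$-product, and once it is in place the statement (and, via the uniqueness clause of Theorem \ref{thm:KLM}, the identification $F=\phi$) follows without further effort.
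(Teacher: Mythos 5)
Your proof is correct and follows essentially the same route as the paper, which verifies the case $n=2$ by the identical computation $F(x_1 .\, x_2)=x_1x_2+[R_-(x_1),x_2]=x_1*x_2$ and then simply asserts that ``a simple induction on the length of the monomials'' finishes the argument. You have supplied that induction explicitly, with the key step $F(x .\, A)=R_+(x)F(A)-F(A)R_-(x)=xF(A)+x\triangleright F(A)=x*F(A)$ carried out correctly (including the identification of $x\triangleright(\cdot)$ with $\operatorname{ad}_{R_-(x)}$ on all of $\mathcal U(\mathfrak g)$ via the derivation property), so nothing further is needed.
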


Comparing this result with Theorem \ref{thm:KLM} of the previous section, one has 

\begin{prop}\label{prop:idenF}
If the post-Lie algebra $(\mathfrak g,\triangleright)$ is defined in terms of a classical $r$-matrix $R$ via \eqref{def:RBpostLie}, then the isomorphism $\phi$ of Theorem \ref{thm:KLM} assumes the explicit form given in Formula \eqref{eq:sigma}, i.e. $\phi=F$.
\end{prop}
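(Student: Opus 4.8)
The plan is to show that the two Hopf algebra isomorphisms $\phi:\mathcal U(\overline{\mathfrak g})\to\mathcal U_*(\mathfrak g)$ and $F:\mathcal U(\mathfrak g_R)\to\mathcal U_*(\mathfrak g)$ agree, using the uniqueness clause already established in Theorem~\ref{thm:KLM}. Recall first from Remark~\ref{rmk:doubleLiebracket-post-Lie} that when the post-Lie product is $x\triangleright y:=[R_-x,y]$, the derived bracket $\llbracket\cdot,\cdot\rrbracket$ of \eqref{postLie3} coincides with the double Lie bracket $[\cdot,\cdot]_R$, so that $\overline{\mathfrak g}=\mathfrak g_R$ as Lie algebras and hence $\mathcal U(\overline{\mathfrak g})=\mathcal U(\mathfrak g_R)$ as Hopf algebras; thus $\phi$ and $F$ have the same source and target, and comparing them is meaningful.

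The key observations are then: (i) By Corollary~\ref{cor:isoal}, $F$ is an \emph{algebra} isomorphism from $\mathcal U(\mathfrak g_R)$ to $\mathcal U_*(\mathfrak g)$, and by Proposition~\ref{pro:lineariso} its restriction to $\mathfrak g_R\hookrightarrow\mathcal U(\mathfrak g_R)$ is the identity map onto $\mathfrak g\hookrightarrow\mathcal U_*(\mathfrak g)$. (ii) The coproducts on $\mathcal U(\mathfrak g_R)=\mathcal U(\overline{\mathfrak g})$ and on $\mathcal U_*(\mathfrak g)$ are both the standard (unshuffle) coproduct of the underlying universal enveloping algebra $\mathcal U(\mathfrak g)$ as a coalgebra — indeed Theorem~\ref{thm:KLM0} states explicitly that $\mathcal U_*(\mathfrak g)$ has the same $\Delta$, $\epsilon$, $\un$ as $\mathcal U(\mathfrak g)$, and likewise $\Delta_{\mathfrak g_R}$ is the usual cocommutative coproduct making the elements of $\mathfrak g_R$ primitive. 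Since $F$ restricts to the identity on the generating primitives, it is automatically a coalgebra morphism (a unital algebra map sending primitives to primitives between universal enveloping algebras intertwines the coproducts), hence $F$ is in fact a Hopf algebra isomorphism $\mathcal U(\overline{\mathfrak g})\to\mathcal U_*(\mathfrak g)$ extending $\operatorname{id}:\overline{\mathfrak g}\to\mathfrak g$.

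With that in hand, the conclusion is immediate: Theorem~\ref{thm:KLM} asserts that there is a \emph{unique} Hopf algebra isomorphism $\mathcal U(\overline{\mathfrak g})\to\mathcal U_*(\mathfrak g)$ extending the identity on $\overline{\mathfrak g}=\mathfrak g$, and it is called $\phi$; since $F$ is such an isomorphism, $F=\phi$. Alternatively, and without invoking the coalgebra structure at all, one can argue purely on the algebra side: $\phi$ is in particular an algebra morphism sending $x_1 .\,\cdots .\, x_n$ to $x_1*\cdots* x_n$ (this is displayed in \eqref{eq:PHIrecursion2}), and Corollary~\ref{cor:isoal} gives the identical formula $F(x_1 .\,\cdots .\, x_n)=x_1*\cdots* x_n$; since monomials span $\mathcal U(\mathfrak g_R)$, the two linear maps coincide. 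I would present the short coalgebra-free version as the main line and mention the uniqueness argument as a remark.

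The only point needing care — and the one I expect to be the minor obstacle — is making explicit that $\overline{\mathfrak g}$ and $\mathfrak g_R$ really are the same Lie algebra in this specialization, so that $\phi$ and $F$ are genuinely maps between the same pair of Hopf algebras rather than maps one must first transport along some identification; this is exactly the content of the computation $\llbracket\cdot,\cdot\rrbracket=[\cdot,\cdot]_R$ recorded in Remark~\ref{rmk:doubleLiebracket-post-Lie}, so it costs nothing to cite. Everything else is bookkeeping: both maps are linear, both restrict to $\operatorname{id}$ on the degree-one part, both are multiplicative for the relevant products, and both are determined by their values on monomials; hence they agree.
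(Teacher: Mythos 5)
Your main line of argument is correct and is essentially the paper's own proof: after identifying $\overline{\mathfrak g}=\mathfrak g_R$ via Remark \ref{rmk:doubleLiebracket-post-Lie}, both $\phi$ and $F$ are unital algebra morphisms into $\mathcal U_*(\mathfrak g)$ restricting to the identity on the generating set $\mathfrak g_R$ (equivalently, both send $x_1 .\, \cdots .\, x_n$ to $x_1 * \cdots * x_n$), hence they coincide. Your alternative route via the uniqueness clause of Theorem \ref{thm:KLM} is also valid, though it needs $F$ to be a Hopf algebra morphism, a fact the paper only establishes afterwards in Theorem \ref{cor:iso}; your primitives-to-primitives argument does supply it, so this is a matter of ordering rather than a gap.
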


\begin{proof}
First recall that $\mathfrak g_R=\overline{\mathfrak g}$, see Remark \ref{rmk:doubleLiebracket-post-Lie}. Then note that both $\phi$ and $F$ are isomorphisms of filtered, unital associative algebras taking values in $\mathcal U_*(\mathfrak g)$, restricting to the identity map on $\mathfrak g_R$ which is the generating set of $\mathcal U(\mathfrak g_R)$.
\end{proof}

At this point it is worth making the following observation, which will be useful later.

\begin{cor}\label{cor:dec}
Every $A\in\mathcal U(\mathfrak g)$ can be written uniquely as
\begin{equation}
	A= R_+(A'_{(1)})S_{\mathfrak g}(R_-(A'_{(2)})) \label{eq:factinu1}
\end{equation}
for a suitable element $A'\in\mathcal U(\mathfrak g_R)$, where we wrote the coproduct of this element using Sweedler's notation, i.e., $\Delta_{\mathfrak g_R}(A')=A'_{(1)}\otimes A'_{(2)}$.
\end{cor}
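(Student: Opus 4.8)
The plan is to read off the statement directly from the two isomorphism results already established, namely Proposition \ref{pro:lineariso} (and its refinement in Corollary \ref{cor:isoal}) together with the explicit formula \eqref{eq:sigma} for $F$. Indeed, Proposition \ref{pro:lineariso} tells us that
\[
	F=m_{\mathfrak g}\circ (\operatorname{id}\otimes S_{\mathfrak g})\circ (R_+\otimes R_-)\circ\Delta_{\mathfrak g_R}
	:\mathcal U(\mathfrak g_R)\longrightarrow\mathcal U(\mathfrak g)
\]
is a \emph{linear isomorphism}. Hence, given $A\in\mathcal U(\mathfrak g)$, there is a unique $A'\in\mathcal U(\mathfrak g_R)$ with $F(A')=A$. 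Writing $\Delta_{\mathfrak g_R}(A')=A'_{(1)}\otimes A'_{(2)}$ in Sweedler notation and unwinding the definition of $F$ yields precisely
\[
	A=F(A')=m_{\mathfrak g}\bigl((\operatorname{id}\otimes S_{\mathfrak g})(R_+(A'_{(1)})\otimes R_-(A'_{(2)}))\bigr)
		= R_+(A'_{(1)})\,S_{\mathfrak g}\bigl(R_-(A'_{(2)})\bigr),
\]
which is exactly \eqref{eq:factinu1}. The uniqueness of $A'$ is the bijectivity of $F$, nothing more.

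The only point that needs a word of care is whether the expression $R_+(A'_{(1)})S_{\mathfrak g}(R_-(A'_{(2)}))$ on the right-hand side of \eqref{eq:factinu1} is well-defined independently of the Sweedler representative chosen for $\Delta_{\mathfrak g_R}(A')$; but this is automatic, since it is literally the value of the composite $\mathbb K$-linear map $m_{\mathfrak g}\circ (\operatorname{id}\otimes S_{\mathfrak g})\circ (R_+\otimes R_-)\circ\Delta_{\mathfrak g_R}$ applied to $A'$, and $R_\pm$ (as extended unital algebra morphisms) and $S_{\mathfrak g}$ are honest linear maps. One may also remark, for the reader's convenience, that on a monomial $x_1.\,\cdots.\,x_k\in\mathcal U(\mathfrak g_R)$ this reproduces the explicit sum \eqref{eq:1}--\eqref{eq:2} computed in the proof of Proposition \ref{pro:lineariso}, and in length one it gives back $x=R_+(x)-R_-(x)$, consistent with the fact that $F$ restricts to the identity on $\mathfrak g_R\hookrightarrow\mathcal U(\mathfrak g)$.

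There is no real obstacle here: the corollary is a direct restatement of the surjectivity and injectivity of $F$ combined with the closed form \eqref{eq:sigma}. If anything, the ``hard part'' was already done, namely the proof of Proposition \ref{pro:lineariso} (the induction on word length establishing bijectivity). So the write-up I would give is short: invoke that $F$ is a linear isomorphism, set $A'=F^{-1}(A)$, expand $F(A')$ using \eqref{eq:sigma} and Sweedler notation to obtain \eqref{eq:factinu1}, and note that uniqueness of $A'$ is injectivity of $F$. One could optionally add a sentence interpreting \eqref{eq:factinu1} as the universal-enveloping-algebra shadow of the group factorization $g=g_+(g_-)^{-1}$ of Theorem \ref{thm:factorizationtheorem}, since $R_\pm$ lift $r_\pm$ and $S_{\mathfrak g}$ is the enveloping-algebra avatar of group inversion; this motivates the statement but is not needed for the proof.
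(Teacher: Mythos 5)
Your proposal is correct and is essentially the paper's own argument: the paper likewise sets $A':=F^{-1}(A)$, appeals to Proposition \ref{pro:lineariso} for bijectivity of $F$, and reads off \eqref{eq:factinu1} from the defining formula \eqref{eq:sigma} in Sweedler notation. Nothing is missing.
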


\begin{proof}
The proof follows from \eqref{eq:PHIrecursion3}, where $A':=F^{-1}(A) \in \mathcal U(\mathfrak g_R)$. Proposition \ref{pro:lineariso} then implies that for each $A' \in \mathcal U(\mathfrak g_R)$,
\[
	F(A')= R_+(A'_{(1)})S_{\mathfrak g}(R_-(A'_{(2)})).
\]
\end{proof}

Finally, in this more specialized context, we can give the following computational proof of the result contained in Theorem \ref{thm:KLM}.

\begin{thm}\label{cor:iso}
The map $F:\mathcal U(\mathfrak g_R)\rightarrow\mathcal U_{\ast}(\mathfrak g)$ is an isomorphism of Hopf algebras. 
\end{thm}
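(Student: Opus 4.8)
The plan is to leverage everything already proved about $F$ and reduce the statement to a one-line check on generators. By Corollary~\ref{cor:isoal} the map $F$ is already an isomorphism of unital, filtered associative algebras from $(\mathcal U(\mathfrak g_R),m_\cdot)$ onto $\mathcal U_\ast(\mathfrak g)=(\mathcal U(\mathfrak g),m_\ast)$, and by Proposition~\ref{pro:lineariso} it restricts to the identity on the generating set $\mathfrak g_R\hookrightarrow\mathcal U(\mathfrak g_R)$. Since the unit, counit and coproduct of $\mathcal U_\ast(\mathfrak g)$ coincide with those of $\mathcal U(\mathfrak g)$ (Theorem~\ref{thm:KLM0}), it remains only to show that $F$ is a morphism of coalgebras, i.e.\ that $\epsilon_{\mathfrak g}\circ F=\epsilon_{\mathfrak g_R}$ and $\Delta\circ F=(F\otimes F)\circ\Delta_{\mathfrak g_R}$. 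The compatibility with the antipodes is then automatic: a bialgebra morphism between Hopf algebras always intertwines the antipodes, since $F\circ S_{\mathfrak g_R}$ and $S_\ast\circ F$ are both two-sided convolution inverses of $F$ in $\operatorname{Hom}(\mathcal U(\mathfrak g_R),\mathcal U_\ast(\mathfrak g))$, hence equal; this yields $F\circ S_{\mathfrak g_R}=S_\ast\circ F$.

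The counit identity is immediate: $F$ is a unital algebra morphism, so $\epsilon_{\mathfrak g}(F(\un))=1=\epsilon_{\mathfrak g_R}(\un)$, while on $x\in\mathfrak g_R$ we have $\epsilon_{\mathfrak g}(F(x))=\epsilon_{\mathfrak g}(x)=0=\epsilon_{\mathfrak g_R}(x)$; as $\mathfrak g_R$ generates $\mathcal U(\mathfrak g_R)$ as a unital algebra and both $\epsilon_{\mathfrak g}\circ F$ and $\epsilon_{\mathfrak g_R}$ are algebra morphisms to $\mathbb K$, they agree everywhere. For the coproduct I would argue in the same spirit. The crucial observations are that $\Delta_{\mathfrak g_R}$ is an algebra morphism $(\mathcal U(\mathfrak g_R),m_\cdot)\to(\mathcal U(\mathfrak g_R)^{\otimes 2},m_\cdot^{\otimes 2})$, that $\Delta$ is an algebra morphism $(\mathcal U_\ast(\mathfrak g),m_\ast)\to(\mathcal U_\ast(\mathfrak g)^{\otimes 2},m_\ast^{\otimes 2})$ — precisely the statement, used already in the derivation of Lemma~\ref{lem:coprodast}, that $\mathcal U_\ast(\mathfrak g)$ is a Hopf algebra with product $m_\ast$ and coproduct $\Delta$ — and that $F$, hence $F\otimes F$, intertwines $m_\cdot$ with $m_\ast$. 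Therefore both $\Delta\circ F$ and $(F\otimes F)\circ\Delta_{\mathfrak g_R}$ are algebra morphisms from $(\mathcal U(\mathfrak g_R),m_\cdot)$ to $(\mathcal U_\ast(\mathfrak g)^{\otimes 2},m_\ast^{\otimes 2})$. On a generator $x\in\mathfrak g_R$ one has $F(x)=x$ and $\Delta_{\mathfrak g_R}(x)=x\otimes\un+\un\otimes x$, hence
\[
	\Delta(F(x))=\Delta(x)=x\otimes\un+\un\otimes x=(F\otimes F)(x\otimes\un+\un\otimes x)=(F\otimes F)(\Delta_{\mathfrak g_R}(x)).
\]
Since $\mathfrak g_R$ generates $\mathcal U(\mathfrak g_R)$ as a unital algebra, the two algebra morphisms coincide on all of $\mathcal U(\mathfrak g_R)$, which gives the coproduct compatibility and completes the proof.

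There is no genuine obstacle here; the argument is purely structural, and the only point requiring care is bookkeeping — keeping track of which product ($m_\cdot$ on the source, $m_\ast$ on the target, and the corresponding tensor-square products) each structure map is an algebra morphism for, so that the principle ``agreement on generators $\Rightarrow$ agreement everywhere'' may legitimately be invoked. Should one prefer a hands-on verification, one could instead combine the explicit expansion of $F$ on monomials from Proposition~\ref{pro:lineariso} with the fact that the lifted maps $R_\pm:\mathcal U(\mathfrak g_R)\to\mathcal U(\mathfrak g)$ are Hopf algebra morphisms and that $\Delta_{\mathfrak g}$ is co-commutative; this is strictly more laborious and adds nothing beyond the generator argument above, so I would present the structural proof.
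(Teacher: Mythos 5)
Your proof is correct, and while its skeleton matches the paper's (start from Corollary~\ref{cor:isoal} and Proposition~\ref{pro:lineariso}, then check the unit, counit, coproduct and antipode compatibilities), your execution of the two nontrivial checks is structural where the paper's is computational. For the coproduct identity $\Delta_{\mathfrak g}\circ F=(F\otimes F)\circ\Delta_{\mathfrak g_R}$ the paper expands $\Delta(x_1\ast\cdots\ast x_n)$ explicitly via Lemma~\ref{lem:coprodast} and matches it term by term against $(F\otimes F)\circ\Delta_{\mathfrak g_R}(x_1\,.\,\cdots\,.\,x_n)$; you instead note that both sides are algebra morphisms from $(\mathcal U(\mathfrak g_R),m_\cdot)$ to $(\mathcal U_\ast(\mathfrak g)^{\otimes 2},m_\ast^{\otimes 2})$ agreeing on the primitive generators, which is legitimate since multiplicativity of $\Delta$ with respect to $m_\ast$ is part of Theorem~\ref{thm:KLM0} and $F$ intertwines the products by Corollary~\ref{cor:isoal}. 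For the antipode the paper derives $S_\ast(x_1\ast\cdots\ast x_n)=(-1)^nx_n\ast\cdots\ast x_1$ by induction and compares with $F\circ S_{\mathfrak g_R}$; you invoke the general fact that a bialgebra morphism between Hopf algebras automatically commutes with the antipodes. One small imprecision there: one verifies directly that $F\circ S_{\mathfrak g_R}$ is a \emph{left} convolution inverse of $F$ (using that $F$ is an algebra morphism) and that $S_\ast\circ F$ is a \emph{right} convolution inverse (using that $F$ is a coalgebra morphism), and equality then follows from associativity of convolution; calling both ``two-sided'' at the outset slightly overstates what is checked, but the conclusion is unaffected. Your route is shorter and relies only on standard Hopf-algebraic principles; the paper's route is more self-contained and yields, as a by-product, the explicit form of $S_\ast$ on $\ast$-monomials.
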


\begin{proof}
The map $F$ is a linear isomorphism which sends a monomial of length $k$ to (a linear combination of) monomials of the same length. For this reason the compatibility of $F$ with the co-units is verified.  Since $F:\mathcal U(\mathfrak g_R)\rightarrow\mathcal U_*(\mathfrak g)$ is an isomorphism of filtered, unital, associative algebras, the product $\ast$ defined in \eqref{eq:postLieU} can be defined as the push-forward to $\mathcal U(\mathfrak g)$, via $F$, of the associative product of $\mathcal U(\mathfrak g_R)$
\begin{equation}
	A \ast B=F(m_{\mathfrak g_R}(F^{-1}(A)\otimes F^{-1}(B))),\label{eq:push}
\end{equation}
for all monomials $A,B\in\mathcal U(\mathfrak g)$. This implies immediately the compatibility of $F$ with the algebra units. Let us show that $F$ is a morphism of co-algebras, i.e., that 
\begin{equation}
\label{eq:morco}
	\Delta_{\mathfrak g}\circ F=(F\otimes F)\circ\Delta_{\mathfrak g_R}.
\end{equation}
Corollary \ref{cor:isoal} implies that $F (x_1.\, \cdots .\, x_n)=x_1 \ast \cdots \ast x_n$, and the formula in Lemma \ref{lem:coprodast} yields
\allowdisplaybreaks{
\begin{eqnarray*}
	\Delta_{\mathfrak g}\big(F (x_1 .\, \cdots .\, x_n)\big)
	&=&x_1\ast\cdots\ast x_n\otimes \mathbf{1} +\mathbf{1}\otimes x_1\ast\cdots\ast x_n \\
	&+&\sum_{k=1}^{n-1}\sum_{\sigma\in\Sigma_{k,n-k}}x_{\sigma(1)}\ast\cdots
	 			\ast x_{\sigma(k)}\otimes x_{\sigma(k+1)}\ast\cdots\ast x_{\sigma(n)},
\end{eqnarray*}}%
which turns out to be equal to $(F\otimes F)\circ\Delta_{\mathfrak g_R}(x_1 .\, \cdots .\, x_n).$ The only thing that is left to be checked is that $F$ is compatible with the antipodes of the two Hopf algebras, i.e., that $F\circ S_{\mathfrak g_R}=S_\ast\circ F$, where for $x_1 .\, \cdots .\, x_n\in\mathcal U(\mathfrak g_R)$, $S_{\mathfrak g_R}(x_1 .\, \cdots .\, x_n)=(-1)^n x_n .\, \cdots .\, x_1$. First recall that the antipode is an algebra anti-homomorphism, i.e., $S_\ast (A\ast B)=S_\ast(B)\ast S_\ast (A)$, for all $A,B\in\mathcal U_*(\mathfrak g)$. From this and from the property that $S_{\mathfrak g_R}(x)=-x$ for all $x\in\mathfrak g_R$, using a simple induction on the length of the monomials, one obtains 
\[
	S_\ast (x_1\ast\cdots\ast x_n)=(-1)^nx_n\ast\cdots\ast x_1.
\]
From this observation follows now easily that $F\circ S_{\mathfrak g_R}=S_\ast\circ F$.
\end{proof}

We conclude this section with the following interesting observation, see Remark \ref{rmk:linkSTSR}.

\begin{prop}\label{prop:prodRSTS}
For all $A,B\in\mathcal U(\mathfrak g)$, one has that:
\begin{equation}
\label{eq:pSTS2}
	A \ast B = R_+(A'_{(1)})B S_{\mathfrak g}(R_-(A'_{(2)})),
\end{equation}
where $A' \in \mathcal U(\mathfrak g_R)$ is the unique element, such that $A=F(A')$, see Corollary \ref{cor:dec}.
\end{prop}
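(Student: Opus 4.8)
The plan is to reduce the identity \eqref{eq:pSTS2} to the two structural facts already at our disposal: the push-forward description of the $\ast$-product in \eqref{eq:push}, namely $A\ast B = F(m_{\mathfrak g_R}(F^{-1}(A)\otimes F^{-1}(B)))$, and the explicit form of $F$ in \eqref{eq:sigma}, that is, $F(C)=R_+(C_{(1)})S_{\mathfrak g}(R_-(C_{(2)}))$ for any $C\in\mathcal U(\mathfrak g_R)$, where the coproduct is that of $\mathcal U(\mathfrak g_R)$. Write $A'=F^{-1}(A)$ and $B'=F^{-1}(B)$; by Corollary \ref{cor:dec} these exist and are unique. Then $A\ast B = F(A' .\, B')$, and applying \eqref{eq:sigma} to the element $A' .\, B' \in \mathcal U(\mathfrak g_R)$ gives
$$
	A\ast B = R_+\big((A'.\,B')_{(1)}\big)\,S_{\mathfrak g}\big(R_-((A'.\,B')_{(2)})\big).
$$
Since $\Delta_{\mathfrak g_R}$ is an algebra morphism, $(A'.\,B')_{(1)}\otimes (A'.\,B')_{(2)} = A'_{(1)}B'_{(1)}\otimes A'_{(2)}B'_{(2)}$, and since $R_\pm:\mathcal U(\mathfrak g_R)\to\mathcal U(\mathfrak g)$ are algebra morphisms (and $S_{\mathfrak g}$ an algebra anti-morphism), this becomes
$$
	A\ast B = R_+(A'_{(1)})\,R_+(B'_{(1)})\,S_{\mathfrak g}(R_-(B'_{(2)}))\,S_{\mathfrak g}(R_-(A'_{(2)})).
$$
Now the middle factor $R_+(B'_{(1)})S_{\mathfrak g}(R_-(B'_{(2)}))$ is exactly $F(B')=B$ by \eqref{eq:sigma} again. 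Hence $A\ast B = R_+(A'_{(1)})\,B\,S_{\mathfrak g}(R_-(A'_{(2)}))$, which is precisely \eqref{eq:pSTS2}.

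The only genuine point requiring care is the step that isolates $F(B')=B$ in the middle of the word; this works because $S_{\mathfrak g}$ reverses order so that the two $B'$-factors end up adjacent and in the correct order to reassemble $F(B')$, while the $A'$-factors land on the outside. It is worth spelling this out carefully, since it is the one place where the anti-automorphism property of $S_{\mathfrak g}$ is used in an essential way. I would therefore present the computation as a short displayed chain of equalities, annotating each step with the fact invoked (multiplicativity of $\Delta_{\mathfrak g_R}$, multiplicativity of $R_\pm$, the anti-automorphism property of $S_{\mathfrak g}$, and finally the defining formula \eqref{eq:sigma} for $F$ read backwards on $B'$). Strictly speaking \eqref{eq:push} and \eqref{eq:sigma} are stated for monomials, so a one-line remark that everything extends by linearity (both sides of \eqref{eq:pSTS2} being linear in $A$ and $B$) closes the argument; this is the kind of routine bookkeeping that I would not belabour. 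I do not anticipate any real obstacle here — the statement is essentially a repackaging of Corollary \ref{cor:dec} together with the multiplicativity of $F$ proved in Theorem \ref{cor:iso}.
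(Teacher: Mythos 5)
Your proposal is correct and follows essentially the same route as the paper's own proof: write $A=F(A')$, $B=F(B')$, expand $A\ast B=F(A'.\,B')$ via \eqref{eq:push} and \eqref{eq:sigma}, use multiplicativity of $\Delta_{\mathfrak g_R}$ and $R_\pm$ together with the anti-automorphism property of $S_{\mathfrak g}$, and reassemble the inner factors as $F(B')=B$. No issues.
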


\begin{proof}
Let $A', B' \in \mathcal U(\mathfrak g_R)$ such that $F(A')=A$ and $F(B')=B$. We use Sweedler's notation for the coproduct $\Delta_{\mathfrak g_R}(A')=A'_{(1)}\otimes A'_{(2)}$, and write $m_{\mathfrak g_R}(A'\otimes B'):=A' .\, B'$ for the product in $\mathcal U(\mathfrak g_R)$.
\allowdisplaybreaks{
\begin{align*}
	A \ast B=F(A' .\, B')
	&=m_{\mathfrak g}\circ (\operatorname{id}\otimes S_{\mathfrak g})\circ (R_+\otimes R_-) 
				\circ\Delta_{\mathfrak g_R}(A' .\, B')\\
	&=m_{\mathfrak g}\circ(\operatorname{id}\otimes S_{\mathfrak g})\circ (R_+\otimes R_-)(A'_{(1)}\otimes A'_{(2)})\cdot (B'_{(1)}\otimes B'_{(2)})\\
	&=m_{\mathfrak g}\circ(\operatorname{id}\otimes S_{\mathfrak g})\circ (R_+\otimes R_-)(A'_{(1)}\cdot B'_{(1)})\otimes (A'_{(2)}\cdot  B'_{(2)})\\
	&=m_{\mathfrak g}\circ(\operatorname{id}\otimes S_{\mathfrak g})\big(R_+(A'_{(1)})R_+(B'_{(1)})\otimes R_-(A'_{(2)})R_-(B'_{(2)})\big)\\
	&\stackrel{(a)}{=}m_{\mathfrak g}\big(R_+(A'_{(1)}) R_+(B'_{(1)})\otimes S_{\mathfrak g}(R_-(B'_{(2)}))S_{\mathfrak g}(R_-(A'_{(2)}))\big)\\
	&=R_+(A'_{(1)}) R_+(B'_{(1)})S_{\mathfrak g}(R_-(B'_{(2)}))S_{\mathfrak g}(R_-(A'_{(2)}))\\
	&= R_+(A'_{(1)}) F(B')S_{\mathfrak g}(R_-(A'_{(2)})\\
	&=R_+(A'_{(1)}) BS_{\mathfrak g}(R_-(A'_{(2)}),
\end{align*}}
which proves the statement. In equality $(a)$ we applied that $S_{\mathfrak g}(\xi\eta)=S_{\mathfrak g}(\eta)S_{\mathfrak g}(\xi)$.
\end{proof}

The map \eqref{eq:sigma} was first defined in \cite{STS3} (see also \cite{RSTS}), where it was used to push-forward to $\mathcal U(\mathfrak g)$ the associative product of $\mathcal U(\mathfrak g_R)$ using formula \eqref{eq:push}. From the equality between the maps $\phi$ and $F$, see Proposition \ref{prop:idenF}, it follows at once that the associative product $m_\ast$ defined in $\mathcal U(\mathfrak g)$ is the product given in \eqref{eq:postLieU}. Our approach provides an easily computable formula for this product, and does not requires the knowledge of the inverse of the map $F$.

{\bf{Another proof of the Theorem \ref{thm:KLM0}}}\quad We give an alternative proof of Theorem \ref{thm:KLM0}, stating that $\mathcal U_{\ast}(\mathfrak g):=(\mathcal U(\mathfrak g),m_\ast,u_{\mathfrak g},\Delta_{\mathfrak g},\epsilon_{\mathfrak g},S_{\ast})$ is a Hopf algebra. Recall that the original proof, which was based on \cite{OudomGuin}, has as a starting point the explicit form of the extension to $\mathcal U(\mathfrak g)$ of the the post-Lie product, see \eqref{eq:postLieU}. In what follows, we will use instead the linear isomorphism $F$ between $\mathcal U(\overline{\mathfrak g})$ and $\mathcal U(\mathfrak g)$, provided in formula \eqref{eq:sigma}, when the post-Lie algebra is defined in terms of a classical $r$-matrix. Starting from this isomorphism, we will define the $\ast$-product on $\mathcal U(\mathfrak g)$ via formula \eqref{eq:push}, and we will then prove that this can be completed to a Hopf algebra structure. First, note that the unit, coproduct and counit are the same as those defining the usual Hopf algebra structure of $\mathcal U(\mathfrak g)$, which, to simplify notation, will be denoted as $u$, $\Delta$ and $\epsilon$, respectively. To prove the theorem we should first check that $(\mathcal U(\mathfrak g),m_\ast,u_{\mathfrak g},\Delta_{\mathfrak g},\epsilon_{\mathfrak g})$ is a bialgebra. To this end, note that from formula \eqref{eq:pSTS2} one deduces easily that $u_{\mathfrak g}$ is the unit of the algebra $(\mathcal U(\mathfrak g),m_\ast)$. Then, it suffices to prove that $\Delta$ and $\epsilon$ are algebra morphisms, i.e., that 
$\epsilon\otimes\epsilon=\epsilon\circ m_\ast$, which is easily checked, and
\begin{equation}
\label{eq:copr}  
	\Delta\circ m_\ast=m_\ast\otimes m_\ast\circ (\operatorname{id}\otimes\tau\otimes\operatorname{id})
	\circ\Delta\otimes\Delta,
\end{equation}
where $\tau$ is the usual flip map. See \cite{Sweedler} for example. Let us show that \eqref{eq:copr} holds. Recall that 
\[
	m_{\ast}(A\otimes B)=F\big(m_{\mathfrak g_R}(F^{-1}(A)\otimes F^{-1}(B))\big),\qquad \forall A,B\in\mathcal U(\mathfrak g).
\]
For every $A\in\mathcal U(\mathfrak g)$, we will write $\Delta(A)=A_{(1)}\otimes A_{(2)}$. Then the righthand side of \eqref{eq:copr}, when applied to $A\otimes B$, becomes:
\allowdisplaybreaks{
\begin{eqnarray*}
\lefteqn{(m_\ast\otimes m_\ast)\circ (\operatorname{id}\otimes\tau\otimes\operatorname{id})\circ (\Delta\otimes\Delta)(A\otimes B)}\\
	&=&(m_\ast\otimes m_\ast)\circ (\operatorname{id}\otimes\tau\otimes\operatorname{id}) 
						\big((A_{(1)}\otimes A_{(2)})\otimes(B_{(1)}\otimes B_{(2)})\big)\\
	&=&(m_\ast\otimes m_\ast) \big((A_{(1)}\otimes B_{(1)})\otimes(A_{(2)}\otimes B_{(2)})\big)\\
	&=&m_{\ast}(A_{(1)}\otimes B_{(1)})\otimes m_{\ast}(A_{(2)}\otimes B_{(2)}).
\end{eqnarray*}}
On the other hand, computing $(\Delta\circ m_{\ast})(A\otimes B)$, and using that $F$ is a comorphism, one gets
\allowdisplaybreaks{
\begin{eqnarray*}
	\lefteqn{\Delta \big(m_\ast (A\otimes B)\big)}\\
		&=& \Delta\big(F(m_{\mathfrak g_R}(F^{-1}(A)\otimes F^{-1} (B))\big)\\
		&=& F\otimes F\Big(\Delta\big(m_{\mathfrak g_R}(F^{-1}(A)\otimes F^{-1}(B))\big)\Big)\\
		&=& (F\otimes F)\circ(m_{\mathfrak g_R}\otimes m_{\mathfrak g_R})\circ 
		(\operatorname{id}\otimes\tau\otimes\operatorname{id})\circ (\Delta\otimes\Delta)\big(F^{-1}(A)\otimes F^{-1} (B)\big)\\
		&=& (F \otimes F)\circ(m_{\mathfrak g_R}\otimes m_{\mathfrak g_R})
		\circ (\operatorname{id}\otimes\tau\otimes\operatorname{id})
			\big(F^{-1}(A_{(1)})\otimes F^{-1}(A_{(2)})\otimes F^{-1}(B_{(1)})\otimes F^{-1}(B_{(2)})\big)\\
		&=& F\big(m_{\mathfrak g_R}(F^{-1}(A_{(1)})\otimes F^{-1}(B_{(1)}))\big)\otimes 
			F\big( m_{\mathfrak g_R}(F^{-1}(A_{(2)})\otimes F^{-1}(B_{(2)}))\big)\\
		&=& m_{\ast}(A_{(1)}\otimes B_{(1)})\otimes m_{\ast}(A_{(2)}\otimes B_{(2)}),
\end{eqnarray*}}%
which gives the proof of the compatibility between $m_\ast$ and $\Delta$, and concludes the proof of the statement. Regarding the proof of the theorem, it suffices now to show that $S_{\ast}$ defined in \eqref{eq:antipodests} is the antipode, i.e., that it satisfies
$m_\ast\circ (\operatorname{id}\otimes S_\ast)\circ\Delta=u\circ\epsilon=m_\ast\circ (S_\ast\circ\operatorname{id})\circ\Delta$. To this end it is enough to recall that $\Delta({\bf{1}})={\bf{1}}\otimes {\bf{1}}$ and $\Delta(x)=x\otimes {\bf{1}}+{\bf{1}}\otimes x$, for all $x\in\mathfrak g$. From these follow that $S_\ast ({\bf{1}})={\bf{1}}$ and, respectively, that $S_\ast(x)=-x$, for all $x\in\mathfrak g$. Using a simple induction on the length of the monomials it follows that $S_\ast$ satisfies \eqref{eq:antipodests}.


\section{Factorization theorems}
\label{sect:factorThm}

Next we consider Theorem \ref{thm:factorizationtheorem} in the context of the universal enveloping algebra of $\mathfrak g$. To this end, one needs first to trade $\mathcal U(\mathfrak g)$ for its completion $\hat{\mathcal U}(\mathfrak g)$. Also, we assume that the classical $r$-matrix in \eqref{eq:impc} satisfies $R \circ R=\operatorname{id}$, which is equivalent to $R_\pm \circ R_\pm = R_\pm$. 

We observe that, since $R_\pm:\mathcal U(\mathfrak g_R) \rightarrow \mathcal U(\mathfrak g)$ are algebra morphisms they map the augmentation ideal of $\mathcal U(\mathfrak g_R)$ to the augmentation ideal of $\mathcal U(\mathfrak g)$  and, for this reason, both these morphisms extend to morphisms $R_\pm:\hat{\mathcal U}(\mathfrak g_R) \rightarrow \hat{\mathcal U}(\mathfrak g)$. In particular, the map $F$ extends to an isomorphism of (complete) Hopf algebras $\hat{F}:\hat{\mathcal U}(\mathfrak g_R)\rightarrow \hat{\mathcal U}_{\ast}(\mathfrak g)$, defined by 
\[
	\hat{F}=\hat{m}_{\mathfrak g}\circ (\operatorname{id} 
	\hat\otimes \hat{S}_{\mathfrak g})\circ(R_+\hat\otimes R_-)\circ\hat\Delta_{\mathfrak g_R},
\]
where, $\hat\Delta_{\mathfrak g_R}$ denotes the coproduct of $\hat{\mathcal U}(\mathfrak g_R)$, and with $\hat{m}_{\mathfrak g}$, $\hat{S}_{\mathfrak g}$ denoting the product respectively the antipode of $\hat{\mathcal U}(\mathfrak g)$. Let $\operatorname{exp}^{\cdot}(x)\in\mathcal G(\hat{\mathcal U}(\mathfrak g_R))$, $\operatorname{exp}^{\ast}(x)\in\mathcal G(\hat{\mathcal U}_{\ast}(\mathfrak g))$ and
$\exp(x) \in \mathcal G(\hat{\mathcal U}(\mathfrak g))$, the respective exponentials. 

 At the level of universal enveloping algebra, the main result of Theorem \ref{thm:factorizationtheorem} can be rephrased.

\begin{thm}\label{thm:factcircled}
Every element $\operatorname{exp}^{\ast}(x)\in \mathcal G(\hat{\mathcal U}_{\ast}(\mathfrak g))$ admits the unique  factorization:
\begin{equation}
\label{eq:factinu2}
	\operatorname{exp}^{\ast}(x)=\exp({x_+})\exp({-x_-}),
\end{equation}
where $x_\pm := R_\pm x$.
\end{thm}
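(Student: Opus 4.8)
The plan is to evaluate the isomorphism $\hat F$ of Proposition \ref{pro:lineariso} (extended to the completions, as arranged just before the theorem) on the group-like element $\exp^{\cdot}(x)\in\mathcal G(\hat{\mathcal U}(\mathfrak g_R))$ in two ways. On the one hand, $\hat F$ is an isomorphism of complete Hopf algebras which restricts to the identity on $V=\mathfrak g_R$; in particular it is an algebra morphism from $(\hat{\mathcal U}(\mathfrak g_R),\hat m_{\mathfrak g_R})$ to $(\hat{\mathcal U}_{\ast}(\mathfrak g),\hat m_{\ast})$ sending $x^{\cdot n}\mapsto x^{\ast n}$, so that $\hat F(\exp^{\cdot}(x))=\exp^{\ast}(x)$. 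On the other hand, since $\exp^{\cdot}(x)$ is group-like, $\hat\Delta_{\mathfrak g_R}(\exp^{\cdot}(x))=\exp^{\cdot}(x)\hat\otimes\exp^{\cdot}(x)$, and the closed form $\hat F=\hat m_{\mathfrak g}\circ(\operatorname{id}\hat\otimes\hat S_{\mathfrak g})\circ(R_+\hat\otimes R_-)\circ\hat\Delta_{\mathfrak g_R}$ collapses to $\hat F(\exp^{\cdot}(x))=\hat m_{\mathfrak g}\bigl(R_+(\exp^{\cdot}(x))\hat\otimes\hat S_{\mathfrak g}(R_-(\exp^{\cdot}(x)))\bigr)$.

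From here I would use that $R_{\pm}:\hat{\mathcal U}(\mathfrak g_R)\to\hat{\mathcal U}(\mathfrak g)$ are unital algebra morphisms, so $R_{\pm}(\exp^{\cdot}(x))=\exp(R_{\pm}x)=\exp(x_{\pm})$, and that $\exp(x_-)$ is group-like in $\hat{\mathcal U}(\mathfrak g)$, whence $\hat S_{\mathfrak g}(\exp(x_-))=\exp(x_-)^{-1}=\exp(-x_-)$. Multiplying out with $\hat m_{\mathfrak g}$ gives $\hat F(\exp^{\cdot}(x))=\exp(x_+)\exp(-x_-)$; comparing with the first evaluation yields \eqref{eq:factinu2}.

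For the uniqueness clause I would appeal to Corollary \ref{cor:dec} in its completed form (equivalently, to bijectivity of $\hat F$ together with the one-to-one correspondence between group-like and primitive elements in the complete filtered Hopf algebra $\hat{\mathcal U}(\mathfrak g_R)$): the element $\exp^{\ast}(x)$, viewed in $\hat{\mathcal U}(\mathfrak g)$, determines the unique $A'=\hat F^{-1}(\exp^{\ast}(x))\in\hat{\mathcal U}(\mathfrak g_R)$ with $\exp^{\ast}(x)=R_+(A'_{(1)})\hat S_{\mathfrak g}(R_-(A'_{(2)}))$, and the computation above identifies $A'=\exp^{\cdot}(x)$, the only group-like element lying over $x$; thus the pair $(x_+,x_-)=(R_+x,R_-x)$ is forced, mirroring the uniqueness in Theorem \ref{thm:factorizationtheorem}. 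The hypothesis $R\circ R=\operatorname{id}$ is not needed for the identity itself; it enters only to ensure $R_{\pm}\circ R_{\pm}=R_{\pm}$, so that $x_{\pm}\in\mathfrak g_{\pm}$ gives a bona fide algebraic analogue of the group factorization \eqref{eq:factlie}.

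I do not anticipate a genuine obstacle: the argument is short once the pieces are assembled. The only points that require care are organizational — keeping straight which of the three coexisting products ($m_{\mathfrak g}$, $m_{\ast}$, $m_{\mathfrak g_R}$) on the common underlying space each exponential refers to, and confirming that $R_{\pm}$, $\hat S_{\mathfrak g}$ and $\hat F$ all pass to the completions, which is guaranteed by the preservation of the augmentation ideals noted in the paragraph preceding the statement.
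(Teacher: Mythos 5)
Your proposal is correct and follows essentially the same route as the paper: both arguments evaluate $\hat F(\exp^{\cdot}(x))$ in two ways, once via the algebra-morphism property to get $\exp^{\ast}(x)$ and once via the explicit formula for $\hat F$ to get $\exp(x_+)\exp(-x_-)$. The only (cosmetic) difference is that you exploit group-likeness of $\exp^{\cdot}(x)$ to collapse the coproduct at once, whereas the paper expands $\hat F(x^{\cdot n})$ by the binomial coproduct formula and resums; your treatment of uniqueness via idempotency of $R_{\pm}$ matches the paper's.
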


\begin{proof}
Again, to simplify notation we write $m_{\mathfrak g_R}(x\otimes y)=x . y$, for all $x,y\in\mathfrak g_R$, so that for each $x \in \mathfrak g_R$, $x^{\cdot n}:=x .\, \cdots .\, x$. Then observe that, for each $n\geq 0$, one has
\[
	\hat F (x^{\cdot n})=R_+(x)^n+\sum_{l=1}^{n-1}(-1)^{n-l}{n\choose l}R_+(x)^lR_-(x)^{n-l}+(-1)^nR_-(x)^n.
\]
Then, after reordering the terms, one finds $\hat{F} (\operatorname{exp}_{\cdot}(x))=\exp({x_+})\exp({-x_-}).$ On the other hand, since $\hat F:\hat{\mathcal U}(\mathfrak g_R)\rightarrow \hat{\mathcal U}_\ast(\mathfrak g)$ is an algebra morphism, one obtains for each $n\geq 0$, $\hat F(x^{\cdot n})	= \hat F(x)\ast\cdots\ast \hat F (x) = x^{\ast n},$ from which it follows that $\hat{F} (\operatorname{exp}^{\cdot}(x))=\operatorname{exp}^{\ast}(x),$ giving the result. Uniqueness follows from $R_\pm$ being idempotent.
\end{proof}

The observation in Theorem \ref{thm:FinverseChi} implies for group-like elements in $\mathcal G(\hat{\mathcal U}(\mathfrak g))$ and $\mathcal G(\hat{\mathcal U}_*(\mathfrak g))$ that $\exp(x) = \exp^*(\chi(x))$, from which we deduce 

\begin{prop} \label{prop:fact}
Group-like elements $\operatorname{exp}(x) \in \mathcal G(\hat{\mathcal U}(\mathfrak g))$ factorize  uniquely
\begin{equation}
\label{eq:factast}
	\operatorname{exp}(x)=\exp({\chi_+(x)})\exp({-\chi_-(x)}).
\end{equation}
\end{prop}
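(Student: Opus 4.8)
The plan is to obtain \eqref{eq:factast} by simply chaining the two factorization results already in hand, Theorem~\ref{thm:FinverseChi} and Theorem~\ref{thm:factcircled}. Fix $x\in\mathfrak g$. By Theorem~\ref{thm:FinverseChi} there is a unique element $\chi(x)$ in the completion of $\mathfrak g$ with $\exp(x)=\exp^*(\chi(x))$; since $\hat{\mathcal U}(\mathfrak g)$ and $\hat{\mathcal U}_*(\mathfrak g)$ share the same underlying module, unit and coproduct, this is a genuine identity of elements of $\hat{\mathcal U}(\mathfrak g)$, and $\exp^*(\chi(x))$ is a group-like element of $\mathcal G(\hat{\mathcal U}_*(\mathfrak g))$. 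Hence Theorem~\ref{thm:factcircled} applies to $\exp^*(\chi(x))$ and gives $\exp^*(\chi(x))=\exp(R_+\chi(x))\exp(-R_-\chi(x))$. Setting $\chi_\pm(x):=R_\pm\chi(x)$ and combining the two equalities produces $\exp(x)=\exp(\chi_+(x))\exp(-\chi_-(x))$, which is \eqref{eq:factast}.

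For uniqueness, suppose $\exp(x)=\exp(a_+)\exp(-a_-)$ is any factorization with $a_\pm$ in the completions of the subalgebras $\mathfrak g_\pm=\operatorname{Im}(R_\pm)$. Because $R\circ R=\operatorname{id}$, the maps $R_+$ and $-R_-$ are complementary idempotents (the spectral projections of the involution $R$) and $R_+-R_-=\operatorname{id}$; a direct check using the resulting projector identities shows that $y:=a_+-a_-$ satisfies $R_+y=a_+$ and $R_-y=a_-$. By Theorem~\ref{thm:factcircled} we then get $\exp^*(y)=\exp(R_+y)\exp(-R_-y)=\exp(a_+)\exp(-a_-)=\exp(x)=\exp^*(\chi(x))$, and the one-to-one correspondence between group-like and primitive elements in the complete filtered Hopf algebra $\hat{\mathcal U}_*(\mathfrak g)$ (cf.~\cite{Quillen}) forces $y=\chi(x)$. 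Therefore $a_\pm=R_\pm\chi(x)=\chi_\pm(x)$, which is the asserted uniqueness.

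I expect no real obstacle here: both ingredients are already established and the argument is essentially bookkeeping. The points that require a little care are, first, the interpretation of the identity $\exp(x)=\exp^*(\chi(x))$ as relating elements of two complete Hopf algebras that share the same coalgebra structure, and, second, making explicit in the uniqueness statement that ``the factorization'' means one into group-like elements of the (completed) subgroups integrating $\mathfrak g_\pm$; once these are pinned down, the projector identities for $R_\pm$ coming from $R\circ R=\operatorname{id}$ finish the argument, exactly as in the proof of Theorem~\ref{thm:factcircled}.
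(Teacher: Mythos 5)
Your argument is correct and is essentially the paper's own proof: the paper also obtains \eqref{eq:factast} by combining Theorem \ref{thm:FinverseChi} with Theorem \ref{thm:factcircled} applied to $\exp^*(\chi(x))$, and derives uniqueness from the idempotency properties of $R_\pm$ (equivalently, the projector identities coming from $R\circ R=\operatorname{id}$). Your write-up merely spells out the bookkeeping that the paper leaves implicit.
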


\begin{proof}
This follows from Theorem \ref{thm:FinverseChi} and Theorem \ref{thm:factcircled} together with $R_-$ being idempotent.
\end{proof}

\begin{rmk} Looking at $\chi(x)$ in the context of $\hat{\mathcal U}(\mathfrak g)$, i.e., with the post-Lie product on $\mathfrak g$ defined in terms of the classical $r$-matrix, $x \triangleright y = [R_-(x),y]$, we find that $\chi_2(x) = -\frac{1}{2} [R_-(x),x]$ and 
$$
	\chi_3(x) = \frac{1}{4} [R_-([R_-(x),x]),x] + \frac{1}{12} ( [[R_-(x),x], x] + [R_-(x),[R_-(x),x]]).   				
$$
This should be compared with Equation (7) in \cite{EGM}, as well as with the results in \cite{EFLIMK}. In fact, comparing with \cite{EGM}, the uniqueness of \eqref{eq:factast} implies that the post-Lie Magnus expansion $\chi: \mathfrak g \to \mathfrak g$ satisfies the BCH-recursion
$$
	\chi(x) =  x + \overline{\operatorname{BCH}}\big(-R_- (\chi(x)),x\big),
$$
where
\[
	\overline{\operatorname{BCH}}(x,y) = \operatorname{BCH}(x,y)  - x - y = \frac{1}{2} [x,y] + \frac{1}{12} \big[x,[x,y]\big] 
		 				+ \frac{1}{12} \big[y,[y,x]\big] - \frac{1}{24} \big[y,[x,[x,y]]\big] + \cdots.
\]

\end{rmk}

\smallskip


\section{Conclusion}
\label{sect:conclusion}

The paper at hand explores in more detail the properties of post-Lie algebras by analyzing the corresponding universal enveloping algebras. A factorization theorem of group-like elements in (a suitable completion of) the universal enveloping algebra corresponding to a post-Lie algebra is derived. It results from the existence of a particular map, called post-Lie Magnus expansion, on the (completion of the post-)Lie algebra. These result are then considered in the context of post-Lie algebra defined in terms of classical $r$-matrices. The link between the theory of post-Lie algebras and results presented in references \cite{RSTS,STS3} are emphasised. More precisely, while in \cite{EFLMK} the existence of an isomorphism between two Hopf algebras naturally associated to every post-Lie algebra was proven by extending results from \cite{OudomGuin}, in the present paper it was shown that the linear isomorphism defined \cite{RSTS,STS3} is indeed a natural example of such an isomorphism between Hopf algebras. This completes the Hopf algebraic picture in \cite{RSTS,STS3}.



\begin{thebibliography}{99.}

\bibitem{BaBeTa}
	O.~Babelon, D.~Bernard, M.~Talon,
	Introduction to Classical Integrable Systems,
	Cambridge Monographs on Mathematical Physics,
	Cambridge University Press, 2007.
	
\bibitem{GuoBaiNi}
	C.~Bai, L.~Guo, X.~Ni,
 	{\emph{Nonabelian generalized Lax pairs, the classical Yang--Baxter equation and PostLie algebras}},
  	Communications in Mathematical Physics {\bf{297}}, number {2}, 553 (2010). 

\bibitem{Blanes}
    	S.~Blanes, F.~Casas, J.A.~Oteo, J.~Ros,
    	{\textsl{Magnus expansion: mathematical study and physical applications}},
    	Phys.~Rep.~{\bf{470}}, 151 (2009). 

\bibitem{Burde}  
	D.~Burde,
	{\emph{Left-symmetric algebras, or pre-Lie algebras in geometry and physics}},
  	Central European Journal of Mathematics {\bf{4}}, number {3}, 323 (2006).

\bibitem{Cartier11}
         P.~Cartier,
        {\emph{Vinberg algebras, Lie groups and combinatorics}},
         Clay Mathematical Proceedings {\bf{11}}, 107 (2011). 

\bibitem{ChapPat}
	F.~Chapoton, F.~Patras, 
	{\it{Enveloping algebras of preLie algebras, Solomon idempotents and the Magnus formula}}, 
	Int.~J.~Algebra and Computation {\bf{23}}, No.~4, 853 (2013). 
	
\bibitem{Poisson1}	
         C.~Laurent-Gengoux, A.~Pichereau, P.~Vanhaecke,
         Poisson Structures,
         Grundlehren der Mathematischen Wissenschaften,	347
         Springer, 2013.
         	
\bibitem{ChuNorris}
	M.~T.~Chu, L.~K.~Norris,
	{\emph{Isospectral flows and abstract matrix factorizations}},
	SIAM J.~Numer.~Anal.~{\bf{25}}, 1383 (1988). 

\bibitem{Poisson2}
          J-P.~Dufour, N.T. Zung,
          Poisson Structures and Their Normal Forms,
          Birkh\"auser Verlag, 2005.          
          
	
\bibitem{EGM}
	K.~Ebrahimi-Fard, L.~Guo, D.~Manchon,
	{\textsl{Birkhoff type decompositions and the Baker--Campbell--Hausdorff recursion}},
	Communications in Mathematical Physics {\bf{267}}, 821 (2006). 

\bibitem{EM} 
	K.~Ebrahimi-Fard, D.~Manchon,
	{\emph{A Magnus- and Fer-type formula in dendriform algebras}}, 
	Foundations of Computational Mathematics {\bf{9}}, 295 (2009). 

\bibitem{ELM14} 
	K.~Ebrahimi-Fard, A.~Lundervold, D.~Manchon,
	{\emph{Noncommutative Bell polynomials, quasideterminants and incidence Hopf algebras}},
	International Journal of Algebra and Computation {\bf{24}}, no 5, 671 (2014).
	
\bibitem{EFLIMK}
	K.~Ebrahimi-Fard, A.~Lundervold, I.~Mencattini, H.~Z.~Munthe-Kaas,
	{\emph{Post-Lie Algebras and Isospectral Flows}}, 
	SIGMA {\bf{25}}, No.~11, 093 (2015).

\bibitem{EFLMK}
	K.~Ebrahimi-Fard, A.~Lundervold, H.~Z.~Munthe-Kaas,
	{\emph{On the Lie enveloping algebra of a post-Lie algebra}}, 
	Journal of Lie Theory {\bf{25}}, No.~4, 1139 (2015).
	
\bibitem{Faybusovich}
 	L.~E.~Faybusovich, 
	{\emph{QR-type factorizations, the Yang--Baxter equation and an eigenvalue problem of control theory}}, 
	Linear Algebra Appl.~{\bf{122-124}}, 943 (1989). 

\bibitem{Kassel}
	Ch.~Kassel,
	Quantum Groups, 
	Graduate Texts in Mathematics, Volume 155, Springer, New York, 1995.        
	        
\bibitem{MKW08}
	H.~Munthe-Kaas, W.~Wright,
	{\textsl{On the Hopf Algebraic Structure of Lie Group Integrators}},
	Found.~Comput.~Math.~{\bf{8}}, no.~2, 227 (2008). 
 
 \bibitem{LMK1}
	A.~Lundervold, H.~Z.~Munthe-Kaas,
	{\emph{On post-Lie algebras, Lie--Butcher series and moving frames}},
	Foundations of Computational Mathematics {\bf{13}}, Issue 4, 583 (2013).

 \bibitem{LMK2}
	A.~Lundervold, H.~Z.~Munthe-Kaas,
	{\emph{On algebraic structures of numerical integration on vector spaces and manifolds}},
	in {\em Fa\`{a} di Bruno Hopf Algebras, Dyson--Schwinger Equations, and Lie--Butcher Series}, 
	K.~Ebrahimi-Fard  and F.~Fauvet, Eds., IRMA Lect.~Math.~Theor.~Phys. {\bf{21}}, 
	Eur.~Math.~Soc., Strasbourg, France, 2015.

\bibitem{Manchon}
	D.~Manchon,
	{\emph{A short survey on pre-Lie algebras}}, 
	in Noncommutative Geometry and Physics: 
	Renormalisation, Motives, Index Theory, \textit{E.~Schr\"odinger Institut Lectures in Math.~Phys.}, 
	Eur.~Math.~Soc, A.~Carey Ed.~(2011). 

\bibitem{OudomGuin}		
	J.-M.~Oudom, D.~Guin,
  	{\emph{On the Lie enveloping algebra of a pre-Lie algebra}},
	Journal of K-theory: K-theory and its Applications to Algebra, 
	Geometry, and Topology {\bf{2}}, number 1, 147 (2008). 

\bibitem{PolishReview}
	M.~B{\l}aszak, B.~M.~Szablikowski,
	{\it{Classical $R$-matrix theory for bi-Hamiltonian field systems}},
	J.~Phys.~A: Math.~Theor.~{\bf{42}} (2009) 404002 (35pp).
	
\bibitem{Quillen}
	D.~Quillen, 
	{\emph{Rational homotopy theory}}, 
	Ann.~of Math.~(2), {\bf 90}, 205 (1969). 

\bibitem{RSTS}
	N.~Yu.~Reshetikhin, M.~A.~Semenov-Tian-Shansly,
	{\emph{Quantum R-matrices and factorization problems}},
	Journal of Geometry and Physics {\bf{5}}, no. 4, 533 (1988). 
	
\bibitem{STS1} 
	M.~A.~Semenov-Tian-Shansky, 
	{\emph{What is a classical $r$-matrix?}},
         Funct.~Ana.~Appl.~{\bf 17}, 254 (1983).

\bibitem{STS2}
	M.~A.~Semenov Tian-Shansky,
	{\emph{Lectures on R-matrices, Poisson-Lie groups and integrable systems}} ,
	in Lectures on integrable systems (Sophia-Antipolis, 1991), 269--317, 
	World Sci.~Publ., River Edge, NJ, 1994.    
         
\bibitem{STS3} 
	M.~A.~Semenov-Tian-Shansky, 
	{\emph{Classical $r$-matrix and quantization}},
         Jour.~Sov.~Math.~{\bf 31}, no.~6, 3411 (1985).

\bibitem{STS4} 
	M.~A.~Semenov-Tian-Shansky, 
 	{\emph{Integrable systems and factorization problems}}, 
	in Factorization and integrable systems (Faro, 2000), 
	Oper.~Theory Adv.~Appl.~{\bf{141}}, 155, Birkh\"auser, Basel, 2003. 

\bibitem{Suris}
	Y.~B.~Suris, 
	The Problem of Integrable Discretization: Hamiltonian Approach, 
	Progress in Mathematics {\bf{219}}. Basel: Birkh\"auser, 2003.
	
\bibitem{Sweedler}
        M.~E.~Sweedler,
        Hopf algebras,
        Mathematical Lectures Notes,
        W.A. Benjamin, Inc. New York, 1969.
         
\bibitem{Vallette}
	B.~Vallette, 
	{\emph{Homology of generalized partition posets}}, 
	Journal of Pure and Applied Algebra, {\bf{208}}(2): 699 (2007). 
	
\bibitem{Watkins} 
	D.~S.~Watkins,
	{\emph{Isospectral Flows}},
	SIAM Review {\bf{26}}, 379 (1984). 

\end{thebibliography}
\end{document}